\newcommand\myurl[1]{\url{#1}}
\newcounter{proofpart}
\xpretocmd{\proof}{\setcounter{proofpart}{0}}{}{}
\newcommand{\proofpart}[1]{
  \par
  \addvspace{\medskipamount}
  \stepcounter{proofpart}
  \noindent\emph{Part \theproofpart: #1}\par\nobreak\smallskip
  \@afterheading
}
\newtheorem{thmu}{Theorem}
\newtheorem{coru}[thmu]{Corollary}
\newtheorem{lem}{Lemma}[section]
\newtheorem{prop}[lem]{Proposition}
\newtheorem{cor}[lem]{Corollary}
\newtheorem{thm}[lem]{Theorem}
\theoremstyle{definition}
\newtheorem{defin}[lem]{Definition}
\newtheorem*{acknowledgements}{Acknowledgements}
\newtheorem{remark}[lem]{Remark}
\newtheorem{setup}[lem]{Setup}
\newtheorem*{motivating question}{Motivating question}
\newtheorem{example}[lem]{Example}
\DeclareMathOperator{\Rad}{Rad} 
\DeclareMathOperator{\projdim}{proj.\!dim}
\DeclareMathOperator{\Soc}{Soc}
\DeclareMathOperator{\Thick}{Thick}
\DeclareMathOperator{\Top}{Top}
\DeclareMathOperator{\gldim}{gl.\!dim}
\DeclareMathOperator{\Ext}{Ext}
\DeclareMathOperator{\Gr}{Gr}
\DeclareMathOperator{\gr}{gr}
\DeclareMathOperator{\grproj}{grproj}
\DeclareMathOperator{\qgr}{qgr}
\DeclareMathOperator{\Hom}{Hom}
\DeclareMathOperator{\RHom}{\textbf{R}Hom}
\DeclareMathOperator{\End}{End}
\DeclareMathOperator{\Modu}{Mod}
\DeclareMathOperator{\add}{add}
\DeclareMathOperator{\modu}{mod}
\DeclareMathOperator{\stgrmodu}{\setul{0.6ex}{0.1ex}
\textup{\ul{gr}}}
\DeclareMathOperator{\Hm}{H}
\DeclareMathOperator{\op}{op}
\DeclareMathOperator{\perf}{perf}
\DeclareMathOperator{\fd}{fd}
\DeclareMathOperator{\dg}{dg}
\DeclareMathOperator{\D}{\mathcal{D}}
\DeclareMathOperator{\Dif}{Dif}
\DeclareMathOperator{\Z}{\mathbb{Z}}
\DeclareMathOperator{\A}{\mathcal{A}}
\DeclareMathOperator{\B}{\mathcal{B}}
\DeclareMathOperator{\U}{\mathcal{U}}
\DeclareMathOperator{\T}{\mathcal{T}}
\newcommand{\Thicks}{\operatorname{Thick}^{\scriptscriptstyle{\langle - \rangle}\!}}
\title{Higher Koszul duality and connections with $n$-hereditary algebras}
\author{Johanne Haugland and Mads Hustad Sand\o y}
\begin{document}

\keywords{Generalized Koszul algebra, Auslander--Reiten theory, higher homological algebra, $n$-hereditary algebra, $n$-representation finite algebra, $n$-representation infinite algebra, preprojective algebra, graded Frobenius algebra.}
\subjclass[2010]{16G20, 16S37, 16W50, 18E30, 18G20}

\address{Department of mathematical sciences, NTNU, NO-7491 Trondheim, Norway}
\email{johanne.haugland@ntnu.no}
\email{mads.sandoy@ntnu.no}

\begin{abstract}
We establish a connection between two areas of independent interest in representation theory, namely Koszul duality and higher homological algebra. This is done through a generalization of the notion of $T$-Koszul algebras, for which we obtain a higher version of classical Koszul duality. Our approach is motivated by and has applications for $n$-hereditary algebras. In particular, we characterize an important class of $n$-$T$-Koszul algebras of highest degree $a$ in terms of $(na-1)$-representation infinite algebras. As a consequence, we see that an algebra is $n$-representation infinite if and only if its trivial extension is $(n+1)$-Koszul with respect to its degree $0$ part. Furthermore, we show that when an $n$-representation infinite algebra is $n$-representation tame, then the bounded derived categories of graded modules over the trivial extension and over the associated $(n+1)$-preprojective algebra are equivalent. In the $n$-representation finite case, we introduce the notion of almost $n$-$T$-Koszul algebras and obtain similar results. 
\end{abstract}

\maketitle

\tableofcontents

\section{Introduction}
Global dimension is a useful measure for the objects one studies in representation theory of finite dimensional algebras. However, while algebras of global dimension $0$ and $1$ are exceptionally well understood, it seems quite difficult to develop a general theory for algebras of higher global dimension. This is a background for studying the class of \textit{$n$-hereditary algebras} \cites{Iyama_2007,Iyama-Oppermann APR,Iyama-Oppermann,Herschend-Iyama-Oppermann,Herschend-Iyama,Herschend-Iyama2,Iyama,Darpo-Iyama}. These algebras play an important role in higher Auslander--Reiten theory \cites{Iyama_2007_2,Iyama_2008,Kvamme-Jasso}, which has been shown to have connections to commutative algebra, both commutative and non-commutative algebraic geometry, combinatorics, conformal field theory, and homological mirror symmetry \cites{Amiot-Iyama-Reiten, Herschend-Iyama-Minamoto-Oppermann, Iyama-Wemyss, Oppermann-Thomas, Evans-Pugh, DJL}. An $n$-hereditary algebra has global dimension less than or equal to $n$ and is either \textit{$n$-representation finite} or \textit{$n$-representation infinite}. As one might expect, these notions coincide with the classical definitions of representation finite and infinite hereditary algebras in the case $n=1$. 

Like in the classical theory, $n$-hereditary algebras have a notion of (higher) preprojective algebras. If $A$ is $n$-representation infinite and the $(n+1)$-preprojective $\Pi_{n+1}A$ is graded coherent, there is an equivalence \mbox{$\D^b(\modu A) \simeq \D^b(\qgr \Pi_{n+1}A)$}, where \mbox{$\qgr \Pi_{n+1}A$} denotes the category of finitely presented graded modules modulo finite dimensional modules \cites{Minamoto 2012, Minamoto & Mori 2011}. On the other hand, the bounded derived category of a finite dimensional algebra of finite global dimension is always equivalent to the stable category of finitely generated graded modules over its trivial extension \cite{Happel}. Combining these two equivalences, and using the notation $\Delta A$ for the trivial extension of $A$, one obtains
\begin{equation} \label{motivating equivalence}
    \stgrmodu (\Delta A) \simeq \D^b(\qgr \Pi_{n+1}A).
\end{equation}

The equivalence above brings to mind the acclaimed Bern\v{s}te\u{\i}n--Gel\cprime fand--Gel\cprime fand-correspondence, which can be formulated as \mbox{$\stgrmodu \Lambda \simeq \D^b(\qgr \Lambda^!)$} for a finite dimensional Frobenius Koszul algebra $\Lambda$ and its graded coherent Artin--Schelter regular Koszul dual $\Lambda^!$ \cite{Bernstein-Gelfand-Gelfand}. The BGG-correspondence is known to descend from the Koszul duality equivalence between bounded derived categories of graded modules over the two algebras, as indicated in the following diagram
\[
    \begin{tikzcd}[column sep=22, row sep=20] 
   \D^b(\gr \Lambda) \arrow[r,"\simeq"] \arrow[d] & \D^b(\gr \Lambda^!) \arrow[d]  \\
    \stgrmodu \Lambda \arrow[r,dashed,"\simeq"] & \D^b(\qgr \Lambda^!).
    \end{tikzcd}
\]
It is natural to ask whether something similar is true in the $n$-representation infinite case, i.e.\ if the equivalence (\ref{motivating equivalence}) is a consequence of some higher Koszul duality pattern. This is a motivating question for this paper.

\begin{motivating question}
Is the equivalence (\ref{motivating equivalence}) a consequence of some higher Koszul duality pattern?
\end{motivating question}

One reasonable approach to this question is to study generalizations of the notion of Koszulity. A positively graded algebra $\Lambda$ generated in degrees $0$ and $1$ with semisimple degree $0$ part is known as a \textit{Koszul algebra} if $\Lambda_0$ is a graded self-orthogonal module over $\Lambda$ \cites{Beilinson-Ginzburg-Soergel, Priddy}. This means that \mbox{$\Ext^{i}_{\gr \Lambda}(\Lambda_0, \Lambda_0 \langle j \rangle) = 0$} whenever \mbox{$i \neq j$}, where $\langle - \rangle$ denotes the graded shift. Using basic facts about Serre functors and triangle equivalences, one can show that a similar statement holds for $\Delta A$ with respect to its degree $0$ part \mbox{$(\Delta A)_0 = A$} in the case where $A$ is $n$-representation infinite. Here, the algebra $A$ is clearly not necessarily semisimple, but it is of finite global dimension. 

In \cite{Green-Reiten-Solberg} Green, Reiten and Solberg present a notion of Koszulity for more general graded algebras, where the degree $0$ part is allowed to be an arbitrary finite dimensional algebra. Their work provides a unified approach to Koszul duality and tilting equivalence. Koszulity in this framework is defined with respect to a module $T$, and thus the algebras are called \textit{$T$-Koszul}. Madsen \cite{Madsen 2011} gives a simplified definition of $T$-Koszul algebras, which he shows to be a generalization of the original one whenever the degree $0$ part is of finite global dimension.

We generalize Madsen's definition to obtain the notion of \textit{$n$-$T$-Koszul algebras}, where $n$ is a positive integer and $n=1$ returns Madsen's theory. In \cref{Existence of T-Koszul equivalence} we prove that an analogue of classical Koszul duality holds in this generality, and we recover a version of the BGG-correspondence in \cref{answer motivating question}. Moreover, we provide a characterization of an important class of $n$-$T$-Koszul algebras of highest degree $a$ in terms of $(na-1)$-representation infinite algebras. This characterization is given as the following theorem.

\begin{thmu} [see \cref{Characterization}] \label{thm1}
Let $\Lambda=\oplus_{i \geq 0}\Lambda_i$ be a finite dimensional graded Frobenius algebra of highest degree $a \geq 1$ with $\gldim \Lambda_0 < \infty$. Consider a basic graded $\Lambda$-module $T$ which is concentrated in degree $0$ and a tilting module over $\Lambda_0$. We assume $T_\mu \simeq T$ as $\Lambda$-modules for the Nakayama automorphism $\mu$ of $\Lambda$.
The following statements are equivalent:
\begin{enumerate}
    \item $\Lambda$ is $n$-$T$-Koszul. 
    \item $\widetilde{T} = \oplus_{i=0}^{a-1}\Omega^{-ni}T \langle i \rangle$ is a tilting object in $\stgrmodu \Lambda$ and \mbox{$B= \End_{\stgrmodu \Lambda}(\widetilde{T})$} is $(na-1)$-representation infinite.
\end{enumerate}
\end{thmu}

As a consequence of \cref{thm1}, we see that an algebra is $n$-representation infinite if and only if its trivial extension is $(n+1)$-Koszul with respect to its degree $0$ part. 

\begin{coru} [see \cref{motivating result}] \label{cor2}
Let $\Lambda = \Lambda_0 \oplus \Lambda_1$ be a finite dimensional graded Frobenius algebra of highest degree $1$ with $\gldim \Lambda_0 < \infty$.
Then $\Lambda$ is $(n+1)$-Koszul with respect to $T=\Lambda_0$ if and only if $\Lambda_0$ is $n$-representation infinite. In particular, we obtain a bijective correspondence given by $A \mapsto \Delta A$
\[
\left\{
\begin{tabular}{@{}l@{}}
    isomorphism classes \\
    of basic $n$-representa- \\ 
    tion infinite algebras
\end{tabular}
\right\}
{\rightleftarrows}
\left\{
\begin{tabular}{@{}l@{}}
    isomorphism classes of graded symmetric finite \\
    dimensional algebras of highest degree $1$ which are \\
    $(n+1)$-Koszul with respect to their degree $0$ part
\end{tabular}
\right\}.
\]
\end{coru}
\noindent Furthermore, we show in \cref{cor: dual of triv.ext.} that when $A$ is $n$-representation infinite, then the higher Koszul dual of its trivial extension is given by the associated $(n+1)$-preprojective algebra. Combining this with our version of the BGG-correspondence, \cref{n-rep tame} gives an affirmative answer to our motivating question. In particular, we see that when an $n$-representation infinite algebra $A$ is \mbox{$n$-representation} tame, then the bounded derived categories of graded modules over $\Delta A$ and over $\Pi_{n+1}A$ are equivalent, and that this descends to give an equivalence \mbox{$\stgrmodu (\Delta A) \simeq \D^b(\qgr \Pi_{n+1}A)$}. 

Having developed our theory for one part of the higher hereditary dichotomy, we ask and provide an answer to whether something similar holds in the higher representation finite case. Inspired by and seeking to generalize the notion of almost Koszul algebras as developed by Brenner, Butler and King \cite{Brenner-Butler-King}, we define \textit{(minimally) almost $n$-$T$-Koszul algebras}. 
This enables us to show a similar characterization result as in the $n$-$T$-Koszul case, namely the following theorem.

\begin{thmu} [see \cref{nrepfinchar}] \label{thm3}
Let $\Lambda$ and $T$ be as in \cref{thm1}. The following statements are equivalent:
\begin{enumerate} 
    \item $\Lambda$ is minimally almost $n$-$T$-Koszul.  
    \item $\widetilde{T} = \oplus_{i=0}^{a-1}\Omega^{-ni}T \langle i \rangle$ is a tilting object in $\stgrmodu \Lambda$ and \mbox{$B = \End_{\stgrmodu \Lambda}(\widetilde{T})$}
    is $(na-1)$-representation finite.
\end{enumerate}
\end{thmu}

\noindent This yields the corollary below, which is a higher representation finite analogue of \cref{cor2}.

\begin{coru}[see \cref{cor sec 7}] 
Let $\Lambda = \Lambda_0 \oplus \Lambda_1$ be a finite dimensional graded Frobenius algebra of highest degree $1$ with $\gldim \Lambda_0 < \infty$.
Then $\Lambda$ is minimally almost $(n+1)$-Koszul with respect to $T=\Lambda_0$ if and only if $\Lambda_0$ is $n$-representation finite. In particular, we obtain a bijective correspondence given by $A \mapsto \Delta A$
\[
\left\{
\begin{tabular}{@{}l@{}}
    isomorphism classes of  \\
    basic $n$-representation \\ 
    finite algebras
\end{tabular}
\right\}
{\rightleftarrows}
\left\{
\begin{tabular}{@{}l@{}}
    isomorphism classes of graded symmetric finite \\
    dimensional algebras of highest degree $1$ which  \\
    are minimally almost $(n+1)$-Koszul with \\ respect to their degree $0$ parts
\end{tabular}
\right\}.
\]
\end{coru}

Altogether, we establish a connection between two areas of independent interest in representation theory, namely Koszul duality and higher homological algebra. Notice that a relationship between Koszulity and $n$-hereditary algebras is also studied in \cite{BP}, and more recently in \cite{Grant-Iyama}. In some sense, parts of the theory we develop is a generalized Koszul dual version of results in \cites{Minamoto & Mori 2011,Grant-Iyama}. Note that many of our results are novel already in the case $n=1$. This demonstrates that questions arising from higher homological algebra can lead to new insight also in the classical case.

This paper is organized as follows. In \cref{preliminaries} we highlight relevant facts about graded algebras, before giving an overview of the notions of tilting subcategories and Serre functors. The definition and general theory of $n$-$T$-Koszul algebras is presented in \cref{Graded n-self-orth and n-T-Koszul}. As a foundation for the rest of the paper, \cref{n-hereditary} is devoted to recalling definitions and known facts about $n$-hereditary algebras. Note that this section does not contain new results. In \cref{T-Koszul and n-rep inf} we state and prove our results on the connections between $n$-$T$-Koszul algebras and higher representation infinite algebras. Finally, almost $n$-$T$-Koszul algebras are introduced in \cref{section almost n-T}, and we develop their theory along the same lines as was done in \cref{T-Koszul and n-rep inf}.

\subsection{Conventions and notation}
Throughout this paper, let $k$ be an algebraically closed field and $n$ a positive integer. All algebras are algebras over $k$. We denote by $D$ the duality \mbox{$D(-) = \Hom_k(-,k)$}. 

Notice that $A$ and $B$ always denote ungraded algebras, while the notation $\Lambda$ and $\Gamma$ is used for graded algebras. We work with right modules, homomorphisms act on the left of elements, and we write the composition of morphisms \mbox{$X \xrightarrow{f} Y \xrightarrow{g} Z$} as \mbox{$g \circ f$}. We denote by $\Modu A$ the category of $A$-modules and by $\modu A$ the category of finitely presented $A$-modules. The derived categories of $\Modu A$ and $\modu A$ are denoted by $\D(\Modu A)$ and $\D(\modu A)$, respectively.

We write the composition of arrows
\begin{tikzcd}
i \xrightarrow{\alpha}j \xrightarrow{\beta} k
\end{tikzcd}
in a quiver as $\alpha\beta$. In our examples, we use diagrams to represent indecomposable modules. This convention is explained in more detail in \cref{ex: section 6}.

Given a set of objects $\U$ in an additive category $\A$, we denote by $\add \U$ the full subcategory of $\A$ consisting of direct summands of finite direct sums of objects in $\U$. If $\A$ is triangulated, we use the notation $\Thick_{\A}(\U)$ for the smallest thick subcategory of $\A$ which contains $\U$. When it is clear in which category our thick subcategory is generated, we often omit the subscript $\A$.

Note that we have certain standing assumptions given at the beginning of \cref{Graded n-self-orth and n-T-Koszul} and \cref{T-Koszul and n-rep inf}.

\section{Preliminaries} \label{preliminaries}

In this section we begin by recalling some facts about graded algebras. Following this, we provide an introduction to a class of algebras which will be studied in \cref{T-Koszul and n-rep inf} and \cref{section almost n-T}, namely the graded Frobenius algebras. We finish by giving an overview of the notions of tilting subcategories and Serre functors, and discuss an equivalence which will be heavily used later on.

\subsection{Graded algebras, modules and extensions}\label{Graded algebras}
Consider a graded $k$-algebra \mbox{$\Lambda=\oplus_{i \in \Z} \Lambda_i$}. The category of graded $\Lambda$-modules and degree $0$ morphisms is denoted by $\Gr \Lambda$ and the subcategory of finitely presented graded $\Lambda$-modules by $\gr \Lambda$. Recall that $\gr \Lambda$ is abelian if and only if $\Lambda$ is graded right coherent, i.e.\ if every finitely generated homogeneous right ideal is finitely presented.

Given a graded module $M= \oplus_{i\in\Z}M_i$, we define the \textit{$j$-th graded shift} of $M$ to be the graded module $M\langle j \rangle$ with \mbox{$M\langle j \rangle_i = M_{i-j}$}. It should be noted that the graded shift $\langle 1 \rangle$ coincides with what is often denoted by $(-1)$ in the literature. The following basic result relates ungraded extensions to graded ones.

\begin{lem}[See \cite{Nastasescu-Van Oystaeyen}*{Corollary 2.4.7}] \label{Ext for graded introres.}
Let $M$ and $N$ be graded $\Lambda$-modules. If $M$ is finitely generated and there is a projective resolution of $M$ such that all syzygies are finitely generated, then
\[
\Ext_{\Lambda}^i(M,N) \simeq \bigoplus_{j\in \Z}\Ext_{\Gr \Lambda}^i(M,N\langle j \rangle)
\]
for all $i \geq 0$.
\end{lem}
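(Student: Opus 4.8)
The plan is to reduce the statement to the case $i=0$ and then propagate it through a projective resolution. First I would establish the key isomorphism at the level of Hom: for a finitely generated graded module $M$ and any graded module $N$, there is a natural isomorphism
\[
\Hom_\Lambda(M,N) \simeq \bigoplus_{j\in\Z}\Hom_{\Gr\Lambda}(M,N\langle j\rangle).
\]
The right-hand side includes into the left-hand side, since a degree-$0$ map $M \to N\langle j\rangle$ is the same datum as a $\Lambda$-linear map $M \to N$ raising degrees by $j$, and homogeneous maps of distinct degrees are linearly independent. For surjectivity one uses finite generation: pick homogeneous generators $m_1,\dots,m_r$ of $M$; any $\Lambda$-linear $f\colon M\to N$ sends each $m_t$ to an element with only finitely many nonzero homogeneous components, so only finitely many ``degree-$j$ parts'' $f_j$ of $f$ are nonzero, and $f = \sum_j f_j$ with $f_j$ homogeneous of degree $j$, i.e.\ $f_j\in\Hom_{\Gr\Lambda}(M,N\langle j\rangle)$.

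Next I would choose, using the hypothesis, a projective resolution $P_\bullet \to M$ in $\Gr\Lambda$ in which every $P_i$ is a finitely generated graded projective module: each syzygy is finitely generated, hence admits an epimorphism from a finite direct sum of shifted copies of $\Lambda$. Forgetting the grading, $P_\bullet \to M$ remains a projective resolution in $\Modu\Lambda$, because graded projectives are projective as ungraded modules. Hence $\Ext^i_\Lambda(M,N) = H^i\bigl(\Hom_\Lambda(P_\bullet,N)\bigr)$ and $\Ext^i_{\Gr\Lambda}(M,N\langle j\rangle) = H^i\bigl(\Hom_{\Gr\Lambda}(P_\bullet,N\langle j\rangle)\bigr)$ for every $j$.

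Applying the Hom-isomorphism from the first step to each $P_i$ (which is finitely generated) gives an isomorphism of cochain complexes $\Hom_\Lambda(P_\bullet,N) \simeq \bigoplus_{j\in\Z}\Hom_{\Gr\Lambda}(P_\bullet,N\langle j\rangle)$; naturality of the isomorphism in the first variable ensures it commutes with the differentials induced by $P_\bullet$, which are degree-$0$ maps. Since direct sums of $k$-vector spaces are exact, taking cohomology commutes with the direct sum over $j$, and we conclude
\[
\Ext^i_\Lambda(M,N) \simeq \bigoplus_{j\in\Z}\Ext^i_{\Gr\Lambda}(M,N\langle j\rangle)
\]
for all $i\ge 0$. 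The only genuinely delicate point is the first step --- that every ungraded homomorphism out of a finitely generated graded module is a finite sum of homogeneous ones --- and this is precisely where the finite generation hypothesis, together with its inheritance by the syzygies, is used; everything after that is formal homological algebra. (Alternatively, one may simply invoke \cite{Nastasescu-Van Oystaeyen}*{Corollary 2.4.7} for the Hom-level statement and run the resolution argument above.)
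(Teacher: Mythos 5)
Your proof is correct and complete. The paper gives no proof of this lemma, simply citing Nastasescu--Van Oystaeyen, Corollary 2.4.7; the argument you supply --- first the Hom-level decomposition for a finitely generated graded module (which is where finite generation is genuinely needed), then the choice of a graded projective resolution with finitely generated terms (available because the syzygies are finitely generated, via Schanuel's lemma if one insists on finitely generated free terms), followed by the termwise application of the Hom-isomorphism and the fact that cohomology commutes with direct sums over $k$ --- is precisely the standard textbook proof that the cited reference records.
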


A non-zero graded module $M = \oplus_{i\in\Z}M_i$ is said to be \textit{concentrated in degree $m$} if \mbox{$M_i  = 0$} for \mbox{$i \neq m$}. When $\Lambda$ is finite dimensional and $M$ finitely generated, there is an integer $h$ such that $M_h \neq 0$ and $M_i = 0$ for every $i > h$. We call $h$ the \textit{highest degree} of $M$. In the same way, the \textit{lowest degree} of $M$ is the integer $l$ such that $M_l \neq 0$ and $M_i = 0$ for every $i < l$. 

\subsection{Graded Frobenius algebras}

Recall that \textit{twisting} by a graded algebra automorphism $\phi$ of a graded algebra $\Lambda$ yields an autoequivalence $(-)_{\phi}$ on $\gr \Lambda$. Given $M$ in $\gr \Lambda$, the module $M_\phi$ is defined to be equal to $M$ as a vector space with right $\Lambda$-action \mbox{$m\cdot \lambda = m \phi(\lambda)$}, while $(-)_{\phi}$ acts trivially on morphisms. 

We make use of the following observation.
\begin{lem}\label{prop: twist of algebra iso as one-sided modules to algebra}
Let $\Lambda$ be a graded algebra with $\phi$ a graded algebra automorphism. 
Then one has an isomorphism $\Lambda_{\phi} \simeq \Lambda$ as right graded $\Lambda$-modules given by sending $\lambda \in \Lambda_{\phi}$ to $\phi^{-1}(\lambda)$. 
\end{lem}

The definition below plays an important role in this paper.

\begin{defin}
    A finite dimensional positively graded algebra $\Lambda$ is called \textit{graded Frobenius} if \mbox{$D\Lambda \simeq \Lambda \langle -a \rangle$} as both graded left and graded right $\Lambda$-modules for some integer $a$. 
\end{defin}

Notice that if $\Lambda$ in the definition above is concentrated in degree $0$, we recover the usual notion of a Frobenius algebra. Observe also that the integer $a$ in the definition must be equal to the highest degree of $\Lambda$, as \mbox{$(D\Lambda)_i = D(\Lambda_{-i})$}. We will usually assume $a \geq 1$.

Being graded Frobenius is equivalent to being Frobenius as an ungraded algebra and having a grading such that the socle is contained in the highest degree. 

\begin{lem}\label{graded frobenius lemma}
Let $\Lambda = \oplus_{i \geq 0}\Lambda_i$ be a finite dimensional algebra of highest degree $a$. 
The following statements are equivalent: 
\begin{enumerate}
    \item $\Lambda$ is graded Frobenius.
    \item There exists a graded automorphism $\mu$ of $\Lambda$ such that \mbox{${}_{1}\Lambda_{\mu}\langle -a \rangle \simeq D\Lambda$} as graded $\Lambda$-bimodules.
    \item $\Lambda$ is Frobenius as an ungraded algebra and satisfies \mbox{$\Soc \Lambda \subseteq \Lambda_a$}.
\end{enumerate}
\end{lem}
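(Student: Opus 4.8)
The plan is to prove the cycle of implications $(1)\Rightarrow(2)\Rightarrow(1)$ and $(1)\Leftrightarrow(3)$, keeping careful track of the distinction between graded left, graded right, and graded bimodule isomorphisms. The key technical tool throughout is the observation already recorded in the text that $(D\Lambda)_i = D(\Lambda_{-i})$, so that any graded isomorphism $D\Lambda \simeq \Lambda\langle -a\rangle$ of one-sided modules automatically forces $a$ to be the highest degree of $\Lambda$; this is why the statement fixes the shift to be exactly $\langle -a\rangle$.

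First I would treat $(1)\Rightarrow(2)$. Assuming $\Lambda$ is graded Frobenius, we have a graded right-module isomorphism $f\colon \Lambda\langle -a\rangle \xrightarrow{\sim} D\Lambda$. The standard ungraded argument that a Frobenius algebra admits a Nakayama automorphism adapts verbatim in the graded setting: set $\mu(\lambda)$ to be the unique element such that, after identifying $D\Lambda$ with $\Lambda\langle-a\rangle$ via $f$, left multiplication by $\lambda$ corresponds to right multiplication by $\mu(\lambda)$. One checks $\mu$ is an algebra homomorphism, is bijective (because left and right multiplication are), and is graded of degree $0$ because $f$ has degree $0$ and the left/right actions are homogeneous. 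This gives precisely the bimodule isomorphism ${}_1\Lambda_\mu\langle -a\rangle \simeq D\Lambda$. For $(2)\Rightarrow(1)$: the bimodule isomorphism in (2) restricts to a graded right-module isomorphism $\Lambda\langle-a\rangle\simeq D\Lambda$ immediately (twisting by $\mu$ does not change the underlying right module up to isomorphism only if $\mu$ is inner, so here one must be slightly careful — but as a \emph{right} module $\Lambda_\mu \simeq \Lambda$ always fails in general, so instead one should argue directly). The cleaner route is: from ${}_1\Lambda_\mu\langle -a\rangle \simeq D\Lambda$ as bimodules, forgetting the right structure and precomposing with $\mu^{-1}$ on the right gives ${}_1\Lambda_1\langle -a\rangle \simeq D(\Lambda_{\mu^{-1}})$ as bimodules, hence as left modules $\Lambda\langle -a\rangle \simeq D\Lambda$; forgetting the left structure instead gives the graded right-module isomorphism. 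Thus $D\Lambda$ is isomorphic to $\Lambda\langle-a\rangle$ as both graded left and graded right modules, which is (1).

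For $(1)\Leftrightarrow(3)$, I would argue as follows. Assume (1). Forgetting the grading, $D\Lambda \simeq \Lambda$ as (ungraded) right modules, which is one of the equivalent characterizations of $\Lambda$ being Frobenius; so $\Lambda$ is ungraded Frobenius. For the socle condition, recall $\Soc\Lambda$ as a right module is the largest semisimple submodule; under the isomorphism $\Lambda\langle-a\rangle\simeq D\Lambda$, which is degree-preserving, the socle of $\Lambda\langle -a\rangle$ corresponds to the socle of $D\Lambda$. Now $D\Lambda$ has highest degree $0$ and lowest degree $-a$ (since $\Lambda$ has lowest degree $0$ and highest degree $a$), and its socle, being $D$ of the top of the left module $\Lambda$, sits in degree $0$; translating back, $\Soc(\Lambda\langle-a\rangle)$ lies in degree $0$, so $\Soc\Lambda$ lies in degree $a$, i.e. $\Soc\Lambda\subseteq\Lambda_a$. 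Conversely, assume (3). Since $\Lambda$ is ungraded Frobenius, fix an ungraded right-module isomorphism $g\colon\Lambda\to D\Lambda$. The hypothesis $\Soc\Lambda\subseteq\Lambda_a$ means the minimal injective cogenerator structure is concentrated appropriately; I would show $g$ can be chosen homogeneous of degree $-a$ by using that $\Soc\Lambda$ being in top degree $a$ forces the socle of each indecomposable projective $P_i = e_i\Lambda$ to lie in degree $a$, hence the injective envelope $D(\Lambda e_{\sigma(i)})$ of the simple top of $P_i$ must be shifted by exactly $a$ to match; assembling these gives the graded right isomorphism $\Lambda\langle-a\rangle\simeq D\Lambda$, and the symmetric argument on the left (using that $\Soc$ of the left module also lands in degree $a$, which follows since $\Soc\Lambda$ as a bimodule notion coincides in a Frobenius algebra) gives the left-module version. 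Hence (1) holds.

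The main obstacle I anticipate is the bookkeeping in $(1)\Leftrightarrow(3)$: one must verify that an \emph{a priori} ungraded module isomorphism witnessing the Frobenius property can be upgraded to a graded one of the correct degree precisely under the socle hypothesis, and this requires knowing that the Nakayama permutation interacts correctly with the grading so that no indecomposable projective gets a "wrong" shift. Handling this cleanly likely uses the decomposition $\Lambda = \bigoplus_i e_i\Lambda$ into indecomposable projectives together with the fact that each has simple socle in degree $a$, so that $D\Lambda = \bigoplus_i D(\Lambda e_i)$ with each summand an indecomposable injective whose socle sits in degree $0$ — matching tops to socles then pins down the shift to be $\langle -a\rangle$ uniformly. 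The one-sided-to-bimodule passage in $(1)\Rightarrow(2)$ is routine once the degree-$0$-ness of $\mu$ is observed, and $(2)\Rightarrow(1)$ is a formal consequence of restricting the bimodule isomorphism to each side.
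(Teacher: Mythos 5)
Your overall plan is correct and would yield a valid proof, but it takes a more hands-on route than the paper, which keeps the proof short by citing two external results: for $(1)\Leftrightarrow(2)$ it invokes \cite{Minamoto & Mori 2011}*{Lemma 2.9}, which produces the graded Nakayama automorphism together with the bimodule isomorphism $D\Lambda\simeq{}_{1}\Lambda_\mu\langle -a\rangle\simeq{}_{\mu^{-1}}\Lambda_1\langle -a\rangle$ directly; and for $(1)\Leftrightarrow(3)$ it uses the uniqueness of graded lifts up to isomorphism and shift from \cite{Beilinson-Ginzburg-Soergel}*{Lemma 2.5.3} together with $\Soc D\Lambda\subseteq(D\Lambda)_0$. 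You instead reconstruct the Nakayama automorphism by adapting the ungraded argument (which is essentially the content of the cited Minamoto--Mori lemma), and for $(3)\Rightarrow(1)$ you match indecomposable projective and injective summands by hand rather than quoting the graded-lift uniqueness theorem; both of these are legitimate substitutes and, if anything, make the argument more self-contained at the cost of length.

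One concrete error worth flagging: in your treatment of $(2)\Rightarrow(1)$, the parenthetical claim that ``as a right module $\Lambda_\mu\simeq\Lambda$ always fails in general'' is false. For any algebra automorphism $\mu$, the map $\mu\colon\Lambda\to\Lambda_\mu$ is a right-module isomorphism, since $\mu(\lambda a)=\mu(\lambda)\mu(a)$ is exactly the twisted right action of $a$ on $\mu(\lambda)$; and if $\mu$ is graded of degree $0$ then this isomorphism is a degree-$0$ graded isomorphism. (Twisting changes the bimodule, and in general the \emph{left} module $_\mu\Lambda$ as a left module, but it never changes the isomorphism class of the underlying one-sided module on the side opposite the twist.) So the ``careful'' worry you raise is vacuous: you can simply restrict the bimodule isomorphism ${}_1\Lambda_\mu\langle-a\rangle\simeq D\Lambda$ to right modules and compose with $\mu$ to land on $\Lambda\langle-a\rangle\simeq D\Lambda$. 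Your alternative argument (passing through ${}_{\mu^{-1}}\Lambda_1$) is also correct, so the conclusion stands; the parenthetical is just a misstatement and should be deleted.

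Minor point on $(1)\Rightarrow(3)$: your identification of $\Soc D\Lambda$ with $D(\Top\Lambda)$ is correct and does sit in degree $0$, but it is worth spelling out that $\Rad\Lambda=\Rad\Lambda_0\oplus\Lambda_{\geq1}$, so $\Top\Lambda=\Lambda_0/\Rad\Lambda_0$ is genuinely concentrated in degree $0$ even when $\Lambda_0$ is not semisimple (this matters in the present generality, where $\Lambda_0$ may be an arbitrary finite dimensional algebra of finite global dimension).
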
 

\begin{proof}
If $\Lambda$ is graded Frobenius, \cite{Minamoto & Mori 2011}*{Lemma 2.9} implies that there exists a graded automorphism $\mu$ of $\Lambda$ such that 
\[
D\Lambda \simeq {}_{1}\Lambda_\mu \langle -a \rangle \simeq {}_{\mu^{-1}}\Lambda_1 \langle -a \rangle
\] 
as graded $\Lambda$-bimodules. It is hence clear that \textit{(1)} is equivalent to \textit{(2)}.

To see that \textit{(1)} is equivalent to \textit{(3)}, use that graded lifts of finite dimensional modules are unique up to isomorphism and graded shift \cite{Beilinson-Ginzburg-Soergel}*{Lemma 2.5.3} together with the fact that \mbox{$\Soc D\Lambda \subseteq (D\Lambda)_0$}.
\end{proof}

The automorphism $\mu$ of a graded Frobenius algebra $\Lambda$ as in the lemma above, is unique up to composition with an inner automorphism and is known as the \textit{graded Nakayama automorphism} of $\Lambda$. We call $\Lambda$ \textit{graded symmetric} if $\mu$ can be chosen to be trivial, and note that this notion also descends to the ungraded case. 

One class of examples which will be important for us, is that of trivial extension algebras. Recall that given a finite dimensional algebra $A$, the \textit{trivial extension of $A$} is \mbox{$\Delta A \coloneqq A \oplus DA$} as a vector space. The trivial extension is an algebra with multiplication \mbox{$(a, f) \cdot (b, g) = (ab, ag + fb)$} for \mbox{$a,b \in A$} and \mbox{$f,g \in DA$}. We consider $\Delta A$ as a graded algebra by letting $A$ be in degree $0$ and $DA$ be in degree $1$. Observe that $\Delta A$ is graded symmetric as it is symmetric as an ungraded algebra and satisfies \mbox{$\Soc \Delta A \subseteq (\Delta A)_1$}.

The stable category of finitely presented graded modules over a graded algebra $\Lambda$ is denoted by $\stgrmodu \Lambda$. If $\Lambda$ is self-injective, the category $\gr \Lambda$ is a Frobenius category, and $\stgrmodu \Lambda$ is triangulated with shift functor $\Omega^{-1}(-)$. Notice that every Frobenius algebra is self-injective. Observe that twisting by a graded automorphism $\phi$ of $\Lambda$ descends to an autoequivalence $(-)_{\phi}$ on $\stgrmodu \Lambda$. This functor commutes with taking syzygies and cosyzygies, as well as with graded shift.

We will often consider syzygies and cosyzygies of modules over self-injective algebras even when we do not work in a stable category. Whenever we do so, we assume having chosen a minimal projective or injective resolution, so that our syzygies and cosyzygies do not have any non-zero projective summands.
Because of our convention with respect to (representatives of) syzygies and cosyzygies, the notions of highest and lowest degree make sense for these too.  

Throughout the paper, we often need to consider basic degree arguments, as summarized in the following lemma. We include a short proof for the convenience of the reader.

\begin{lem} \label{lemma}
Let $\Lambda=\oplus_{i \geq 0}\Lambda_i$ be a finite dimensional self-injective graded algebra of highest degree $a$ and $\Soc\Lambda \subseteq \Lambda_a$. Consider $M,N,P \in \gr\Lambda$, where $P$ is indecomposable projective of highest degree $h$, and $M$ is of highest degree $h_M$ and lowest degree $l_M$. The following statements hold:
\begin{enumerate}
    \item Given any non-zero element $x \in P$, there exists $\lambda \in \Lambda$ such that \mbox{$x \lambda \in P_h$} is non-zero.
    \item For every non-zero morphism \mbox{$f \in \Hom_{\gr \Lambda}(M,P)$}, there exists an element \mbox{$x \in M$} such that \mbox{$f(x) \in P_h$} is non-zero. 
    \item Assume $a \geq 1$, and let $M$ and $N$ be concentrated in degree $0$. Then 
    \[
    \Hom_{\stgrmodu \Lambda}(M, N) \simeq \Hom_{\gr \Lambda}(M, N).
    \]  
    \item If $M$ is non-projective, then the lowest degree of $\Soc \Omega^{i}M$ is greater than or equal to $l_{M} + a$ for $i > 0$.
    \item If $M$ is non-projective, then the highest degree of $\Top \Omega^{i}M$ is less than or equal to $h_{M} - a$ for $i < 0$.
    \item If $M$ is non-projective, then the highest degree of $\Omega^{i}M$ is less than or equal to $h_{M}$ in the case $i\leq 0$ and greater than or equal to $l_{M} + a$ in the case $i > 0$. 
    \item Let $M$ be concentrated in degree $0$. Then 
    \[
    \Hom_{\gr \Lambda}(M,\Omega^iM\langle j \rangle) = 0
    \]
    for $i,j < 0$.
    \item Let $M$ be concentrated in degree $0$. Then 
    \[
    \Hom_{\gr \Lambda}(M,\Omega^iM\langle j \rangle) = 0
    \]
    for $i > 0$ and $j \geq 1 - a$. 
\end{enumerate}
\end{lem}

\begin{proof}
Since $\Soc \Lambda$ is concentrated in one degree and $P$ is indecomposable, we see that $\Soc P \subseteq P_h$ holds. Since $0 \neq \Soc x\Lambda \subseteq \Soc P \subseteq P_h$, we have $(x\Lambda)_h \neq 0$, as desired, and we thus obtain \textit{(1)}.

For \textit{(2)}, let $y \in M$ such that $f(y) \neq 0$. By \textit{(1)}, there exists an element \mbox{$\lambda \in \Lambda$} such that $f(y)\lambda \in P_h$ is non-zero. Consequently, the element \mbox{$x=y\lambda$} yields our desired conclusion.

To verify \textit{(3)}, notice that there can be no non-zero homomorphism \mbox{$M \rightarrow N$} factoring through a $\Lambda$-projective. Otherwise, one would have non-zero homomorphisms \mbox{$M \rightarrow \Lambda \langle i\rangle$} and \mbox{$\Lambda\langle i\rangle \rightarrow N$} for some integer $i$. The former is possible only if \mbox{$i = -a$} by \textit{(2)}. However, if $i=-a$, the latter is impossible as $\Lambda\langle -a\rangle$ is generated in degree $-a$. 

To show \textit{(4)}, we proceed as follows. Let $P$ be a projective cover of $M$. Since the lowest degree of $P$ is $l_M$, the lowest degree of \mbox{$\Soc P = \Soc \Omega M$} is \mbox{$l_M + a$}. Since $l_{\Omega M} \geq l_M$ holds, the claim follows by induction.

Since the argument for \textit{(4)} works just as well for left $\Lambda$-modules, we deduce \textit{(5)} from the left $\Lambda$-module version of \textit{(4)} by using the graded $k$-dual $D(-)$, that $D(M)_n = D(M_{-n})$, and that $D(\Soc D(M)) \simeq \Top M$ for any finitely generated graded right $\Lambda$-module $M$.

As the highest degree of $\Omega^iM$ is equal to the highest degree of $\Soc\Omega^iM$, we see from \textit{(4)} that the highest degree of $\Omega^iM$ is greater than or equal to $l_{M}+a$ for $i > 0$. Moreover, it follows from \textit{(5)} that the generators of $\Omega^iM$ are in degrees less than or equal to $h_{M}-a$ for $i < 0$, which means that the highest degree of $\Omega^iM$ is less than or equal to $h_{M}$. Since the claim in the case $i = 0$ is simply that $M$ has highest degree $h_{M}$, this finishes the proof of \textit{(6}).

Observe that \textit{(7)} is immediate in the case where $M$ is projective. Otherwise, note that the highest degree of $\Omega^{i}M$ is at most $0$ by \textit{(6)}. Hence, the highest degree of $\Omega^{i}M \langle j \rangle$ is less than or equal to $j$. As $j < 0$, this yields our desired conclusion.

For \textit{(8)}, it again suffices to consider the case where $M$ is non-projective. Applying \textit{(4)}, our assumptions yield that the lowest degree of $\Soc \Omega^{i}M \langle j \rangle$ is greater than or equal to $1$. By \textit{(2)}, this gives \mbox{$\Hom_{\gr \Lambda}(M,\Omega^iM\langle j \rangle) = 0$}, as syzygies are submodules of projectives. 
\end{proof} 

\subsection{Tilting subcategories, equivalences and Serre functors} \label{tilting section}

Tilting subcategories and the equivalences they provide play a crucial role throughout this paper. In this section we recall relevant notions and discuss an equivalence which will be heavily used in \cref{T-Koszul and n-rep inf} and \cref{section almost n-T}. We also describe the correspondence of Serre functors induced by this equivalence.

\begin{defin}\label{def: tilting subcat}
    Let $\T$ be a triangulated category. 
    A subcategory $\mathcal{M}$ of $\T$ is a \textit{tilting subcategory} if the following conditions hold:
\begin{enumerate}
    \item $\Hom_{\T}(M,M'[i])=0$ for $i \neq 0$ and $M, M' \in \mathcal{M}$.
    \item $\Thick_{\T}(\mathcal{M})=\T$.
\end{enumerate}
If $\mathcal{M} = \add M$ is a tilting subcategory for an object $M$ in $\T$, we say that $M$ is a \textit{tilting object}.
\end{defin}

The first condition in the definition above is often referred to as \textit{rigidity}. 

A triangulated category is called \textit{algebraic} if it is triangle equivalent to the stable category of a Frobenius category; for definitions, see e.g.\ \cite{Keller04}*{Section 3.6}. We now recall how essentially all algebraic triangulated categories can be described using dg-categories, and in this we follow closely the presentation in \cite{Keller04}*{Section 3.6}. However, since we only make use of dg-categories and the techniques of dg-homological algebra in this section and in \cref{Graded n-self-orth and n-T-Koszul}, we refer the reader to \cite{Keller} for an introduction to dg-homological algebra. Because of this choice, we have more or less adopted the notation of that source for the reader's convenience. In particular, recall from \cite{Keller} that given a dg-category $\A$, we define the category $\Hm^{0} \A$ (resp.\ $\Hm^{*} \A$) to have the same objects as $\A$ and morphisms given by taking the $0$-th cohomology (resp.\ the cohomology) of the morphism spaces in $\A$. Similarly, the category $\tau_{\leq 0} \A$ has the same objects as $\A$, and morphisms given by taking subtle truncation. Recall also the definition of the dg-category $\Dif \mathcal{A}$, which we denote instead by $C_{\dg}(\mathcal{A})$. If $\mathcal{A}$ is an ordinary algebra concentrated in cohomological degree $0$, the objects of the category $C_{\dg}(\mathcal{A})$ are complexes of modules over $\mathcal{A}$, and the morphisms are given by homogeneous maps which do not necessarily respect the differentials. 
We also recall that $\D (\A)$ denotes the derived category \mbox{of $\A$}. Note especially that we write $\D^{\perf}(\A)$ for the perfect derived category of $\A$ and that our choice of notation here differs from \cite{Keller04}.

The following is a special case of Keller's Morita theorem for algebraic triangulated categories. 

\begin{thm}\label{thm: keller's morita thm for algebraic triangulated categories}
\cite{Keller04}*{Theorem 3.8 b)}
For an idempotent complete algebraic triangulated category $\mathcal{T}$ with a full subcategory $\mathcal{M}$ satisfying $\Thick \mathcal{M} = \mathcal{T}$, one can choose a pretriangulated dg-category $\mathcal{A}$ with a full dg-subcategory $\mathcal{B}$ such that $\mathcal{T} \simeq \Hm^0 (\mathcal{A})$ and $\mathcal{M} \simeq \Hm^0(\mathcal{B})$. 
Then the dg-functor $\mathcal{A} \rightarrow C_{\dg} (\mathcal{B})$ given by $X \mapsto \mathcal{A}(\mathcal{B},X)$ gives a triangle equivalence $\mathcal{T} \simeq \D^{\perf}(\mathcal{B})$.
If $\mathcal{M}$ is a tilting subcategory, then we have quasi-equivalences
\[
\begin{tikzcd}
\mathcal{M} \simeq \Hm^{0} \mathcal{B} & \arrow[two heads]{l} \tau_{\leq 0} \mathcal{B} \arrow[hookrightarrow]{r} & \mathcal{B}
\end{tikzcd}
\]
which give triangle equivalences $\D^{\perf}( \mathcal{M}) \simeq \D^{\perf}(\Hm^0 \mathcal{B}) \simeq \mathcal{T}$.
\end{thm}

Recall that when $\Lambda$ is a self-injective graded algebra, the category $\gr \Lambda$ is Frobenius, and consequently the stable category $\stgrmodu \Lambda$ is an algebraic triangulated category. By \cref{thm: keller's morita thm for algebraic triangulated categories}, we hence know that if $T$ is a tilting object in $\stgrmodu \Lambda$ and \mbox{$B=\End_{\stgrmodu \Lambda}(T)$} has finite global dimension, then there exists a triangle equivalence \mbox{$G \colon \D^b(\modu B) \rightarrow \stgrmodu \Lambda$} given by the quasi-inverse of the equivalence obtained via \cref{thm: keller's morita thm for algebraic triangulated categories}. In \cref{T-Koszul and n-rep inf} and \cref{sec 7}, we will use that projective $B$-modules correspond to summands of $T$ under this equivalence, as described in \cref{eksplisitt ekvivalens} below. This fact follows from the construction of the equivalence in \cref{thm: keller's morita thm for algebraic triangulated categories}. 

Given a decomposition $T \simeq \oplus_{i=1}^t T^i$ of $T$, we let $e_i \colon T \twoheadrightarrow T^i \hookrightarrow T$ denote the $i$-th projection followed by the $i$-th inclusion. This yields a decomposition \mbox{$B \simeq \oplus_{i=1}^t P^i$} of $B$ into projectives $P^i = e_i B$. Note moreover that if \mbox{$\gldim \Lambda_0 < \infty$}, then \mbox{$\gldim B < \infty$} by \cite{Yamaura 2013}*{Corollary 3.12}.

\begin{prop} \label{eksplisitt ekvivalens}
Let $\Lambda$ be finite dimensional self-injective graded algebra and assume that \mbox{$\gldim \Lambda_0 < \infty$}. Consider a tilting object $T$ in $\stgrmodu \Lambda$ and denote its endomorphism algebra by \mbox{$B = \End_{\stgrmodu \Lambda}(T)$}. Then the equivalence
\[
G \colon \D^b(\modu B) \rightarrow \stgrmodu \Lambda
\]
satisfies $G(e_i B) \simeq T^i$.
\end{prop}

From \cref{T-Koszul and n-rep inf} and on, the following notion will be crucial.

\begin{defin}
Let $\mathcal{T}$ be a $k$-linear $\Hom$-finite triangulated category. An additive autoequivalence $\mathcal{S}$ on $\mathcal{T}$ is called a \textit{Serre functor} provided there exists a bifunctorial isomorphism
\[
\Hom_{\mathcal{T}}(X, Y) \simeq D\Hom_{\mathcal{T}}(Y, \mathcal{S}X)
\]
for all objects $X$ and $Y$ in $ \mathcal{T}$.
\end{defin}

We want to compare the Serre functor on $\D^b (\modu B)$ to that of $\stgrmodu \Lambda$ when $\Lambda$ is a graded Frobenius algebra of highest degree $a$ with Nakayama automorphism $\mu$. In this case, it follows from Auslander--Reiten duality, see \cite{Auslander-Reiten-Smalo} and \cite{Reiten-Van den Bergh}*{Proposition I.2.3}, combined with the characterization in \cref{graded frobenius lemma} that $\Omega(-)_{\mu}\langle -a \rangle$ is a Serre functor on $\stgrmodu \Lambda$. As $B$ is a finite dimensional algebra of finite global dimension, the derived Nakayama functor \mbox{$\nu(-) = - \otimes^{\mathbf{L}}_{B} DB$} is a Serre functor on $\D^b (\modu B)$. By uniqueness of the Serre functor, the equivalence \mbox{$G \colon \D^b(\modu B) \rightarrow \stgrmodu \Lambda$} yields a commutative diagram
\begin{center}\label{correspondence of Serre functors}
    \begin{tikzcd}[column sep=22, row sep=20]
   \D^b(\modu B) \arrow[r,"G"] \arrow[d,"\nu"] & \stgrmodu \Lambda \arrow[d,"\Omega(-)_{\mu}\langle -a\rangle"]  \\
    \D^b(\modu B) \arrow[r,"G"] & \stgrmodu \Lambda.
    \end{tikzcd}    
\end{center}

Note that throughout the rest of this paper, we often use the triangle equivalence \mbox{$G \colon \D^b(\modu B) \rightarrow \stgrmodu \Lambda$} and the correspondence of the Serre functors described in the diagram above without making the reference explicitly.

\section{Higher Koszul duality} \label{Graded n-self-orth and n-T-Koszul}

Throughout the rest of this paper, let $\Lambda = \oplus_{i \geq 0} \Lambda_i$ be a positively graded algebra, where $\Lambda_0$ is a finite dimensional basic algebra. We assume that $\Lambda$ is locally finite dimensional, i.e.\ that $\Lambda_i$ is finite dimensional as a vector space over $k$ for each $i \geq 0$.

In this section we define more flexible notions of what it means for a module $T$ to be graded self-orthogonal and an algebra to be $T$-Koszul than the ones introduced by Madsen \cite{Madsen 2011}*{Definition 3.1.1 and 4.1.1}. This enables us to talk about $T$-Koszul duality for a more general class of algebras. In particular, we obtain a higher Koszul duality equivalence in \cref{Existence of T-Koszul equivalence} and we recover a higher version of the BGG-correspondence in \cref{answer motivating question}. Note that the ideas in this section are similar to the ones in \cite{Madsen 2011}. For the convenience of the reader, we nevertheless give concise proofs of this section's main results, to show that the arguments work also in our generality. 

In order to state our main definitions, let us first recall the notion of a tilting module. 

\begin{defin}\label{tilting module def}
Let $A$ be a finite dimensional algebra. A finitely generated $A$-module $T$ is called a \textit{tilting module} if its projective resolution is a tilting object in $\D^{\perf}(A)$. 

In other words, $T$ is a tilting module if it satisfies the following conditions: 
\begin{enumerate}
    \item $\projdim_A T < \infty$;
    \item $\Ext^i_A(T,T)=0$ for $i>0$;
    \item There is an exact sequence
    \[
    0 \rightarrow A \rightarrow T^0 \rightarrow T^1 \rightarrow \cdots \rightarrow T^l \rightarrow 0
    \]
    with $T^i \in \add T$ for $i=0,\dots,l$.
\end{enumerate}
\end{defin}

We now define what it means for a module to be graded $n\mathbb{Z}$-orthogonal. 

\begin{defin}\label{def:gr so}
	Let $T$ be a finitely generated basic graded $\Lambda$-module concentrated in degree $0$. We say that $T$ is \textit{graded $n\mathbb{Z}$-orthogonal} if 
	\[
    \Ext^{i}_{\gr \Lambda}(T,T\langle j \rangle) = 0 
    \]
	for $i \neq nj$.
\end{defin}

Notice that the definition of being graded $n\mathbb{Z}$-orthogonal is more general than the notion of graded self-orthogonality given in \cite{Madsen 2011}. More precisely, the two definitions coincide exactly when $n$ is equal to $1$. In this case, examples of graded $n\mathbb{Z}$-orthogonal modules are given by $\Lambda_0$ in the classical Koszul situation or tilting modules if $\Lambda=\Lambda_0$. Moreover, we see in \cref{T-Koszul and n-rep inf} that $n$-representation infinite algebras provide examples of modules which are graded $n\mathbb{Z}$-orthogonal for any choice of $n$.

In general, a graded $n\mathbb{Z}$-orthogonal module might have syzygies which are not finitely generated, so \cref{Ext for graded introres.} does not apply. However, the following proposition gives a similar result for graded $n\mathbb{Z}$-orthogonal modules. This is an analogue of \cite{Madsen 2011}*{Proposition 3.1.2}. The proof is exactly the same, except that we use our more general definition of graded $n\mathbb{Z}$-orthogonality. 

\begin{prop} \label{Ext for T graded self-orthogonal}
Let $T$ be a graded $n\mathbb{Z}$-orthogonal $\Lambda$-module. Then
    \[
    \Ext_{\Lambda}^{ni}(T,T) \simeq \Ext_{\gr \Lambda}^{ni}(T,T\langle i \rangle)
    \]
    for all $i \geq 0$.
\end{prop}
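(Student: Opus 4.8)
The statement asserts that for a graded $n$-self-orthogonal module $T$, the ungraded $\Ext$ in degrees divisible by $n$ is computed by a single graded piece: $\Ext^{ni}_\Lambda(T,T) \simeq \Ext^{ni}_{\gr\Lambda}(T,T\langle i\rangle)$. The natural starting point is the general comparison between ungraded and graded $\Ext$. Although \cref{Ext for graded introres.} is stated under the hypothesis that $M$ admits a projective resolution with finitely generated syzygies (which may fail for $T$ here), there is always a natural map, or at least a spectral-sequence/direct-sum decomposition, relating $\Ext^m_\Lambda(T,N)$ to $\bigoplus_{j\in\Z}\Ext^m_{\gr\Lambda}(T,N\langle j\rangle)$ once $T$ is finitely \emph{generated}; the finiteness-of-syzygies hypothesis is only needed to guarantee the sum is the full ungraded $\Ext$. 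Since $T$ is finitely generated (being a finitely generated graded $\Lambda$-module concentrated in degree $0$), I would first establish that
\[
\Ext^m_\Lambda(T,T) \simeq \bigoplus_{j\in\Z}\Ext^m_{\gr\Lambda}(T,T\langle j\rangle)
\]
still holds. The cleanest route is to view $\Lambda$ as the dg-category $\mathcal{A}$ from \cref{Graded algebras as dg-categories}: a finitely generated graded module corresponds to a compact object, and $\Hom$ in the derived category of $\mathcal{A}$ decomposes over the grading exactly via \eqref{Hom-sets coincide}, giving the displayed isomorphism for all $m\geq 0$ without any syzygy hypothesis. Alternatively one takes a graded projective resolution of $T$ by finitely generated graded projectives (which exists since $T$ is finitely generated, even if the resolution is not eventually finitely generated termwise — actually each term can be taken finitely generated inductively) and notes that $\Hom_\Lambda(-,T)$ applied to it decomposes degreewise; here one must be slightly careful, which is why the dg-category viewpoint is preferable.

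**Using self-orthogonality.** Granting the degreewise decomposition, the second and main step is to kill all summands except $j = i$ when $m = ni$. By hypothesis $\Ext^{ni}_{\gr\Lambda}(T,T\langle j\rangle) = 0$ unless $ni = nj$, i.e.\ unless $j = i$ (here $n\geq 1$, so $ni = nj \iff i = j$). Hence in the sum $\bigoplus_{j\in\Z}\Ext^{ni}_{\gr\Lambda}(T,T\langle j\rangle)$ every term vanishes except the $j = i$ term, which yields
\[
\Ext^{ni}_\Lambda(T,T) \simeq \Ext^{ni}_{\gr\Lambda}(T,T\langle i\rangle)
\]
as desired. This is the whole content once the comparison isomorphism is in place.

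**The main obstacle.** The only real subtlety is justifying the decomposition $\Ext^m_\Lambda(T,T) \simeq \bigoplus_j \Ext^m_{\gr\Lambda}(T,T\langle j\rangle)$ when $T$ need not have finitely generated syzygies, since \cref{Ext for graded introres.} as quoted does not literally apply. I expect the resolution of this to be exactly the point the authors gloss with ``the proof is exactly the same'': either one observes that finite generation of $T$ alone suffices for the natural map $\bigoplus_j \Ext^m_{\gr\Lambda}(T,T\langle j\rangle) \to \Ext^m_\Lambda(T,T)$ to be an isomorphism (finitely generated modules are compact in $\Gr\Lambda$, so $\Hom$ out of $T$ commutes with the relevant coproducts, and $\Lambda$ is locally finite dimensional), or one invokes the dg-category $\mathcal{A}$ of \cref{Graded algebras as dg-categories} directly, where $T$ is a compact object and the identity $\Hom_{\D(\mathcal{A})}(T, T[m]) \simeq \bigoplus_j \Hom_{\D(\Gr\Lambda)}(T, T\langle j\rangle[m])$ is built into the construction. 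I would present the argument via this dg-category identification, as it is the framework the preliminaries set up and it avoids any delicate hands-on manipulation of infinite resolutions; then apply graded self-orthogonality to collapse the sum, finishing the proof.
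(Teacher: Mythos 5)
Your \emph{second} step (using $n$-self-orthogonality to kill all summands $j \neq i$) is fine, but the proof stands or falls with the \emph{first} step — the unconditional decomposition
\[
\Ext^m_\Lambda(T,T) \;\simeq\; \bigoplus_{j\in\mathbb{Z}} \Ext^m_{\gr\Lambda}(T,T\langle j\rangle)
\]
for all $m$ — and this step is exactly the thing that can fail. The hypothesis in \cref{Ext for graded introres.} that all syzygies be finitely generated is not cosmetic: if a minimal graded projective resolution of $T$ has a term $P^{-m}$ with generators in infinitely many degrees, then $\Hom_\Lambda(P^{-m},T)$ is an infinite \emph{product} over those degrees while $\bigoplus_j\Hom_{\gr\Lambda}(P^{-m},T\langle j\rangle)$ is the corresponding \emph{sum}, and the two genuinely differ. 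This is precisely the difficulty the paper flags immediately before the statement ("a graded self-orthogonal module $T$ might have syzygies which are not finitely generated, so \cref{Ext for graded introres.} does not apply"), so a proof that begins by re-asserting the decomposition unconditionally is not a repair of that difficulty but a restatement of it.

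Your proposed dg-category fix does not close this gap. The equivalence $\D(\mathcal{A})\simeq\D(\Gr\Lambda)$ of \cref{Graded algebras as dg-categories} computes \emph{graded} Ext: $\Hom_{\D(\mathcal{A})}(T,T[m])\simeq\Ext^m_{\Gr\Lambda}(T,T)$, with no ungraded Ext in sight. Converting this to $\Ext^m_\Lambda(T,T)$ is exactly the comparison at issue, and it is not "built into the construction." Nor is $T$ a compact object of $\D(\Gr\Lambda)$: compactness there means being a perfect complex, which $T$ is not in general (for instance, once $\Lambda$ is Frobenius as in \cref{T-Koszul and n-rep inf}, $T$ is perfect iff projective). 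Finite generation of $T$ alone gives $\Hom_\Lambda(T,-)\simeq\bigoplus_j\Hom_{\gr\Lambda}(T,(-)\langle j\rangle)$ in degree $0$, but says nothing about the higher terms of a resolution.

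The structural point you are missing is that the graded $n$-self-orthogonality has to enter \emph{before} any decomposition of $\Ext_\Lambda$, not merely afterwards to prune a sum you have already formed. Madsen's argument (\cite{Madsen 2011}*{Proposition 3.1.2}, which the paper says to mimic with $n$ in place of $1$) proceeds by induction on $i$, using the vanishing $\Ext^m_{\gr\Lambda}(T,T\langle j\rangle)=0$ for $m\neq nj$ to control the shape of a minimal graded projective resolution of $T$ and to show that the portion of each syzygy relevant to computing $\Ext^{ni}$ is finitely generated; only then can the graded and ungraded Ext in cohomological degree $ni$ be matched. As written, your proof has a genuine gap at its first step and an incorrect justification for why that step should hold.
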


Using our definition of a graded $n\mathbb{Z}$-orthogonal module $T$, we also get a more general notion of what it means for an algebra to be Koszul with respect to $T$.  

\begin{defin}
	Assume $\gldim \Lambda_0 < \infty$ and let $T$ be a graded $\Lambda$-module concentrated in degree $0$. We say that $\Lambda$ is \textit{$n$-$T$-Koszul} or \textit{$n$-Koszul with respect to $T$} if the following conditions hold:
	\begin{enumerate}
		\item $T$ is a tilting $\Lambda_0$-module.
		\item $T$ is graded $n\mathbb{Z}$-orthogonal as a $\Lambda$-module.
	\end{enumerate}
\end{defin}

Note that an algebra is $T$-Koszul in the sense of \cite{Madsen 2011} if and only if it is $n$-$T$-Koszul for $n=1$. The following two remarks each discuss an aspect of how this definition relates to the corresponding one in \cite{Madsen 2011}.

\begin{remark}\label{rem: on regrading}
If $\Lambda = \oplus_{i\geq 0} \Lambda_{i}$ is $n$-$T$-Koszul, one can rescale the grading of $\Lambda$ so that the regraded algebra $\Lambda^{\rho}$ is $T$-Koszul in the sense of \cite{Madsen 2011}*{Definition 4.1.1} by defining
$\Lambda^{\rho}_{i} = \Lambda_{j}$ if $i = nj$ for some integer $j$ and $\Lambda^{\rho}_{i} = 0$ otherwise. Then one also obtains that the category $\gr \Lambda$ embeds into $\gr \Lambda^{\rho}$ as the full subcategory consisting of modules which are non-zero only in degrees multiples of $n$. 

Note, however, that we cannot always work directly with the regraded algebras. For instance, the category $\gr \Lambda^{\rho}$ is `too big' for the motivating question in the introduction since regrading $\Delta \coloneqq \Delta A$ by putting $DA$ in degree $a > 1$ yields that $\stgrmodu \Delta^{\rho}$ is \textit{not} equivalent to $\D^b(\modu A)$ by e.g.\ \cite{Yamaura 2013}*{Proposition 3.11}.
\end{remark}

\begin{remark} 
In \cref{def:gr so} we require a graded $n\mathbb{Z}$-orthogonal module to be basic for consistency with \cite{Madsen 2011}. Consequently, we later assume that certain algebras are basic, for instance in \cref{motivating result}. Note that usually this is of limited importance for our proofs.
\end{remark}

Like in the classical theory, we want a notion of a Koszul dual of a given $n$-$T$-Koszul algebra. 

\begin{defin}
Let $\Lambda$ be an $n$-$T$-Koszul algebra. The \textit{$n$-$T$-Koszul dual of $\Lambda$} is given by $\Lambda^! =  \oplus_{i \geq 0} \Ext_{\gr \Lambda}^{ni}(T,T\langle i \rangle)$.
\end{defin}

Note that while the notation for the $n$-$T$-Koszul dual is potentially ambiguous, it will in this paper always be clear from context which $n$-$T$-Koszul structure the dual is computed with respect to.

By \cref{Ext for T graded self-orthogonal}, we get the following equivalent description of the $n$-$T$-Koszul dual.

\begin{cor} \label{cor: n-T-Koszul dual}
Let $\Lambda$ be an $n$-$T$-Koszul algebra. Then there is an isomorphism of graded algebras $\Lambda^! \simeq  \oplus_{i \geq 0}\Ext_{\Lambda}^{ni}(T,T)$.
\end{cor}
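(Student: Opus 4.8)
The plan is to check that the degreewise isomorphism provided by \cref{Ext for T graded self-orthogonal} is compatible with the Yoneda products, so that it assembles into an isomorphism of graded algebras.

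First I would pin down the two algebra structures. The ungraded Yoneda algebra $\bigoplus_{m \geq 0}\Ext_\Lambda^m(T,T)$ satisfies $\Ext_\Lambda^{ni}(T,T)\cdot \Ext_\Lambda^{nj}(T,T) \subseteq \Ext_\Lambda^{n(i+j)}(T,T)$, so $\bigoplus_{i \geq 0}\Ext_\Lambda^{ni}(T,T)$ is a graded subalgebra whose $i$-th component is $\Ext_\Lambda^{ni}(T,T)$. On the graded side, the bigraded vector space $\bigoplus_{m,j \geq 0}\Ext_{\gr \Lambda}^m(T, T\langle j \rangle)$ carries a Yoneda-type product sending $\xi \in \Ext_{\gr \Lambda}^{m}(T, T\langle j \rangle)$ and $\eta \in \Ext_{\gr \Lambda}^{m'}(T, T\langle j' \rangle)$ to $\eta\langle j\rangle \circ \xi \in \Ext_{\gr \Lambda}^{m+m'}(T, T\langle j + j' \rangle)$, using that the graded shift $\langle j \rangle$ is an exact autoequivalence of $\gr \Lambda$ carrying $T$ to $T\langle j\rangle$ and $T\langle j'\rangle$ to $T\langle j + j'\rangle$. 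Since $T$ is graded $n$-self-orthogonal, $\Ext_{\gr \Lambda}^m(T, T\langle j \rangle) = 0$ for $m \neq nj$, so this bigraded algebra collapses to $\bigoplus_{j \geq 0}\Ext_{\gr \Lambda}^{nj}(T, T\langle j \rangle) = \Lambda^!$ with the algebra structure already attached to it.

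Next I would introduce the comparison map. The forgetful functor $\gr \Lambda \to \Modu \Lambda$ is exact and acts as the identity on underlying modules and maps; in particular it sends $T\langle i \rangle$ to $T$ and forgets the shifts $\langle i \rangle$ occurring in the product on $\Lambda^!$. Applying it to Yoneda extensions therefore induces $k$-linear maps $\Ext_{\gr \Lambda}^{ni}(T, T\langle i \rangle) \to \Ext_\Lambda^{ni}(T,T)$ which are natural and compatible with splicing of extensions, and these assemble into a homomorphism of graded algebras $\Phi \colon \Lambda^! \to \bigoplus_{i \geq 0}\Ext_\Lambda^{ni}(T,T)$; multiplicativity holds precisely because forgetting the grading turns the shift-twisted composition in $\Lambda^!$ into the ordinary Yoneda product in the target. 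Finally, \cref{Ext for T graded self-orthogonal} says exactly that each component of $\Phi$ is bijective -- here one notes that the isomorphism appearing there is the one induced by the forgetful functor, being the graded $n$-self-orthogonal analogue of the decomposition in \cref{Ext for graded introres.} -- so $\Phi$ is an isomorphism of graded algebras.

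The main obstacle is the second step: carefully verifying that the forgetful functor induces a ring homomorphism on the relevant Yoneda algebras, with the graded shift in the product on $\Lambda^!$ tracked correctly. The bijectivity of the underlying linear maps is immediate from \cref{Ext for T graded self-orthogonal}; the real content of the corollary is that this identification respects multiplication.
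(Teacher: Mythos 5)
Your proposal is correct and matches the paper's (largely implicit) argument: the paper simply cites \cref{Ext for T graded self-orthogonal} without spelling out multiplicativity, and what you supply is exactly the missing step, namely that the degreewise linear isomorphisms come from the forgetful functor $\gr\Lambda\to\Modu\Lambda$ and hence assemble into a homomorphism of graded algebras. Your identification of both Yoneda-type products and the observation that forgetting the grading collapses the shift-twisted composition on $\Lambda^!$ to the ordinary Yoneda product in $\bigoplus_{i\geq 0}\Ext_\Lambda^{ni}(T,T)$ are precisely what is needed.
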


The next result shows that the $n$-$T$-Koszul property behaves well with respect to tensor products.

\begin{prop}\label{prop: n-T-Koszul closed under tensor products}
If $\Lambda^{i}$ is $n$-$T^{i}$-Koszul for $1 \leq i \leq m$, then $\Lambda^1 \otimes_k \cdots \otimes_k \Lambda^m$ is $n$-$T$-Koszul for $T \coloneqq T^1 \otimes_k \cdots \otimes_k T^m$.
\end{prop}

\begin{proof}
By induction, it suffices to show this for $m = 2$. 
Let thus $\Lambda \coloneqq \Lambda^1 \otimes_k \Lambda^2$.  
It will be useful to apply a K\"{u}nneth formula for $\Ext_{\gr \Lambda^{\rho}}$, where $\Lambda^{\rho}$ is a regraded version of $\Lambda$ endowed with a $\mathbb{Z}\times \mathbb{Z}$-grading. The grading of $\Lambda^{\rho}$ is given by letting \mbox{$\Lambda^{\rho}_{(i,j)} = \Lambda^1_i \otimes_k \Lambda^2_j$}. We now have
\[
\Ext^{i}_{\gr \Lambda^{\rho}} (T^1 \otimes_k T^2, T^1 \otimes_k T^2 \langle (j,k) \rangle)
\simeq 
\bigoplus_{i_1 + i_2 = i}
\Ext^{i_1}_{\gr \Lambda^1} (T^1, T^1 \langle j \rangle)
\otimes_k 
\Ext^{i_2}_{\gr \Lambda^2} (T^2, T^2 \langle k \rangle).
\]
Combining this with $T^{i} \in \gr \Lambda^{i}$ being graded $n \mathbb{Z}$-orthogonal, we obtain 
\[
\Ext^{i}_{\gr \Lambda} (T^1 \otimes_k T^2, T^1 \otimes_k T^2 \langle l \rangle)
\simeq 
\bigoplus_{j + k = l}
\Ext^{i}_{\gr \Lambda^{\rho}} (T^1 \otimes_k T^2, T^2 \otimes_k T^2 \langle (j,k) \rangle) = 0
\]
for $i \neq nl$, so $\Lambda$ is $n$-$T$-Koszul.
\end{proof}

Given a set of objects $\U \subseteq \D^b(\gr \Lambda)$, let $\Thicks(\U)$ denote the smallest thick subcategory of $\D^b(\gr \Lambda)$ which contains $\U$ and is closed under graded shift $\langle - \rangle$. Using that $\Lambda_0$ has finite global dimension and that $T$ is a tilting $\Lambda_0$-module, one can see that $T$ generates the entire bounded derived category of $\gr \Lambda$ whenever $\Lambda$ is a finite dimensional $n$-$T$-Koszul algebra. 

\begin{lem}\label{lem:T generates}
Let $\Lambda$ be a finite dimensional $n$-$T$-Koszul algebra. We then have \mbox{$\Thicks(T) = \D^b(\gr \Lambda)$}.
\end{lem}

\begin{proof}
Since $T$ is a tilting module over $\Lambda_0$, and $\Lambda_0 \langle i \rangle$ thus has a finite coresolution in $\add T\langle i \rangle$, we deduce that $\Lambda_0\langle i \rangle$ is in $\Thicks(T)$ for every $i \in \mathbb{Z}$.

Notice now that every simple graded $\Lambda$-module is concentrated in degree $i$ for some $i \in \mathbb{Z}$ and is hence necessarily contained in the thick subcategory generated by $\Lambda_0 \langle i \rangle$. To see this, apply $\langle i \rangle$ to a finite $\Lambda_0$-projective resolution of such a module, split up into short exact sequences and use that thick subcategories have the $2/3$-property on distinguished triangles. We can thus conclude that \mbox{$\Thick(\{\Lambda_0\langle i \rangle \}_{i \in \mathbb{Z}}) = \D^b(\gr \Lambda)$}, which finishes the proof.
\end{proof}

We are now ready to state and prove the main result of this section, namely to show that we obtain a higher Koszul duality equivalence. This recovers and strengthens \cite{Madsen 2011}*{Theorem 4.3.4} in the case where $n=1$ and is a version of \mbox{\cite{Beilinson-Ginzburg-Soergel}*{Theorem 2.12.6}} in the classical Koszul case. Note that following a suggestion of Bernhard Keller, we prove that $\Lambda^!$ has finite global dimension. We do hence not need to assume this as in \cite{Madsen 2011}*{Theorem 4.3.4}.

\begin{thm} \label{Existence of T-Koszul equivalence}
Let $\Lambda$ be a finite dimensional $n$-$T$-Koszul algebra. The following statements hold:
\begin{enumerate}
    \item $\U = \{T\langle i \rangle [ni] \mid i \in \Z \}$ is a tilting subcategory of $\D^b(\gr \Lambda)$, and we have $\D^b(\gr \Lambda) \simeq \D^{\perf}( \Gr \Lambda^!)$ as triangulated categories.
    \item $\Lambda^!$ has finite global dimension.  
    \item  If $\Lambda^!$ is also graded right coherent, there is a triangle equivalence 
    \[
    K \colon \D^b(\gr\Lambda) \xrightarrow{\simeq} \D^b(\gr\Lambda^!).
    \]
\end{enumerate}
\end{thm}

\begin{proof}[Proof of \cref{Existence of T-Koszul equivalence}]
Let us first show \textit{(1)}. We have
\begin{align*}
\Hom_{\D^b(\gr \Lambda)}(T \langle i \rangle [ni], T \langle j \rangle [nj + k]) 
& \simeq 
\Hom_{\D^b(\gr \Lambda)}(T , T \langle j - i \rangle [n(j-i) + k])\\
& \simeq \Ext_{\gr \Lambda}^{n(j-i) + k}(T,T \langle j - i \rangle) = 0
\end{align*}
for $k \neq 0$ as $T$ is graded $n\mathbb{Z}$-orthogonal. Moreover, \cref{lem:T generates} yields 
\[
\Thick (\U) = \Thicks(T) = \D^b(\gr\Lambda).
\]
Combining this shows that $\U$ is a tilting subcategory of $\D^b(\gr \Lambda)$. 

We next want to apply \cref{thm: keller's morita thm for algebraic triangulated categories}. Let $\grproj \Lambda$ denote the category of finitely generated projective graded $\Lambda$-modules with homogeneous morphisms of \mbox{degree $0$}. Recall that $C^{-,b}_{\dg}(\grproj \Lambda)$ is the the dg-category consisting of right bounded complexes over $\grproj \Lambda$ with bounded cohomology. In particular, its morphism spaces are given by all homogeneous maps of complexes that are also homogeneous of degree $0$ with respect to the grading of $\Lambda$. We now set $\mathcal{A}$ to be \mbox{$C^{-,b}_{\dg}(\grproj \Lambda)$} and $\mathcal{B}$ to be the full dg-subcategory of $\mathcal{A}$ given by $\{P \langle i \rangle [ni] \mid i \in \Z \}$, where $P$ is some graded projective resolution of $T$. 

Since $\U \simeq \Hm^0 \B$ is a tilting subcategory of $\D^b(\gr \Lambda$) by \textit{(1)}, we have a triangle equivalence $\D^b(\gr \Lambda) \simeq \D^{\perf}(\Hm^0 \B)$ by \cref{thm: keller's morita thm for algebraic triangulated categories}.
On the other hand, since 
\[
\bigoplus_{i \in \mathbb{Z}} \Hom_{\Hm^{0}\mathcal{B}}(P, P \langle i \rangle [ni])\simeq \bigoplus_{i \in \mathbb{Z}} \Ext_{\gr \Lambda}^{ni}(T,T\langle i \rangle) = \Lambda^!,
\]
we get an equivalence $\add \Hm^0\B \simeq \grproj \Lambda^!$ by using \cite{Hanihara}*{Proposition 4.2} with $\mathbb{Z}$ acting on $\add \Hm^0\B$ via the functor $(-)\langle 1 \rangle [n]$. 
This equivalence then induces a triangle equivalence $\D^{\perf}(\Hm^0 \B) \simeq \D^{\perf}(\Gr \Lambda^!)$.  
In combination, this yields that
\[
\D^b(\gr \Lambda) \simeq \D^{\perf}(\Hm^0 \B) \simeq \D^{\perf}( \Gr \Lambda^!),
\]
which finishes the proof of \textit{(1)}.

For \textit{(2)}, note that by \cite{Grant-Iyama}*{Theorem A.1} it is sufficient to demonstrate that $\Lambda^!$ has finite global dimension as a graded algebra. We do this by showing that $\Lambda^!$ is \textit{smooth} as a graded algebra, i.e.\ that $\Lambda^!$ is perfect as a $\Lambda^!$-bimodule.

Recall now that $\Lambda^{e} \coloneqq \Lambda^{\op} \otimes_k \Lambda$ is the enveloping algebra of $\Lambda$, and note that we grade $\Lambda^{e}$ over $\mathbb{Z}$ by setting $\Lambda^{e}_{i} \coloneqq \oplus_{j + k = i}\Lambda_{j} \otimes_k \Lambda_{k}$.
Moreover, recall that \mbox{($k$-symmetric)} $\mathbb{Z}$-graded $\Lambda$-bimodules correspond to $\mathbb{Z}$-graded $\Lambda^{e}$-modules.
Observe that $\Lambda^{e}$ is $n$-Koszul with respect to $DT \otimes_k T$ by \cref{prop: n-T-Koszul closed under tensor products} and that $\Lambda^{!,e} \coloneqq (\Lambda^!)^{e}  \simeq (\Lambda^!)^{e}$, where the 
isomorphism follows by the proof of \cref{prop: n-T-Koszul closed under tensor products}.
With this in mind, we see that \cref{prop: n-T-Koszul closed under tensor products} and \textit{(1)} together imply that 
\[
\mathcal{W} := \{DT \otimes_kT \langle i \rangle [ni] \mid i \in \Z \}
\]
is a tilting subcategory of $\D^b(\gr \Lambda^{e})$
and that 
\mbox{$\D^b(\gr \Lambda^{e}) \simeq \D^{\perf}( \Gr \Lambda^{!,e})$}.
Consulting the proof of \textit{(1)} above, we see that the functor $F$ giving this equivalence can be written as follows
\[
F(-) = \bigoplus_{i \in \mathbb{Z}}\RHom_{\Gr \Lambda^{e}}(DT \otimes_kT, -  \langle i \rangle [ni]).
\]
Using this, one can 
deduce that $\Lambda^{!}$ is in $\D^{\perf}(\Gr \Lambda^{!, e})$ via the following computation, which we note is a variation on an argument from Theorem 2.15 and Theorem 3.7 of \cite{LS}:
\begin{align*}
F(D(\Lambda)) 
& = \bigoplus_{i \in \mathbb{Z}}\RHom_{\Gr \Lambda^{e}}(DT \otimes_kT, D(\Lambda)  \langle i \rangle [ni]) \\
& \simeq 
\bigoplus_{i \in \mathbb{Z}}\RHom_{\Gr \Lambda}(T, \RHom_{\Gr \Lambda^{\op}} (DT, D(\Lambda)  \langle i \rangle [ni]))\\
& \simeq 
\bigoplus_{i \in \mathbb{Z}}\RHom_{\Gr \Lambda}(T, \RHom_{\Gr \Lambda} (\Lambda \langle -i \rangle [-ni], T))\\
& \simeq 
\bigoplus_{i \in \mathbb{Z}}\RHom_{\Gr \Lambda}(T, T \langle i \rangle [ni])\\
& \simeq \Lambda^!
\end{align*}

Consequently, we can conclude that $\Lambda^!$ has finite global dimension as a graded algebra by \cite{L}*{Lemma 3.6}, which finishes the proof of \textit{(2)}. 

In \textit{(3)} we have assumed that $\Lambda^!$ is graded right coherent. 
As $\Lambda^!$ has finite global dimension as a graded algebra, we thus have a triangle equivalence 
\[
\D^{\perf}(\Gr \Lambda^!) \xrightarrow{\simeq} \D^b(\gr \Lambda^!).
\]
Composing this with the equivalence established in \textit{(1)} finishes the construction of $K$ and the proof of \textit{(3)}.
\end{proof}

\begin{remark}\label{rem: another proof}
It is also possible to derive the equivalence in \cref{Existence of T-Koszul equivalence} \textit{(3)} by regrading the algebras involved as in \cref{rem: on regrading} and tracking our original (derived) categories of graded modules through the equivalence in \cite{Madsen 2011}*{Theorem 4.3.4}. Proceeding in this way, one can also recover generalized analogues of many of the results \mbox{in \cite{Madsen 2011}}.

To see how to get this alternative proof, we begin by noting that since the embedding in \cref{rem: on regrading} is exact, it induces a triangulated functor between the corresponding derived categories. By \cite{Stacks project}*{Lemma 13.17.4}, this functor yields an equivalence \mbox{$\D^b(\gr \Lambda) \xrightarrow{\simeq} \D^b_{\gr \Lambda}(\gr \Lambda^{\rho})$}, where $\D^b_{\gr \Lambda}(\gr \Lambda^{\rho})$ denotes the full subcategory of $\D^b(\gr \Lambda^{\rho})$ consisting of objects with cohomology in $\gr \Lambda$.

Using that $\Lambda^{\rho}$ is $T$-Koszul and noticing that $(\Lambda^!)^{\rho} \simeq (\Lambda^{\rho})^!$, we get by \cite{Madsen 2011}*{Theorem 4.3.4} the equivalence in the upper row of the diagram  
\[
  \begin{tikzcd}[column sep=22, row sep=20]
  & \D^b(\gr \Lambda^{\rho}) \arrow[r,"\simeq"] & \D^b(\gr (\Lambda^!)^{\rho})\\
  \D^b(\gr \Lambda) \arrow[r,"\simeq"] &
  \D^b_{\gr \Lambda}(\gr \Lambda^{\rho}) \arrow[r,dashed,"\simeq"] \arrow[hookrightarrow]{u} & \D^b_{\gr \Lambda^!}(\gr (\Lambda^!)^{\rho}) \arrow[hookrightarrow]{u} & \D^b(\gr \Lambda^!). \arrow{l}[swap]{\simeq}
  \end{tikzcd}
\]
In order to deduce \cref{Existence of T-Koszul equivalence} \textit{(3)}, it now suffices to observe that this equivalence restricts to an equivalence as indicated by the dashed arrow.
\end{remark}

Using \cref{rem: another proof}, we now obtain a generalization of a result from \cite{Madsen 2011}. 

\begin{thm}\label{prop: Koszul dual is n-DT-Koszul}
If $\Lambda$ is $n$-$T$-Koszul, then $\Lambda^!$ is $n$-$DT$-Koszul. 
\end{thm}

\begin{proof}
Since shifting by $1$ in $\gr \Lambda$ corresponds to shifting by $n$ in $\gr \Lambda^{\rho}$, the argument in \cref{rem: another proof} together with \cite{Madsen 2011}*{Theorem 4.2.1 (a)} is sufficient.
\end{proof}

Recall that we denote by \mbox{$K \colon \D^b(\gr \Lambda) \rightarrow \D^b(\gr \Lambda^!)$} the equivalence from \cref{Existence of T-Koszul equivalence}. 

\begin{prop}\label{K and int deg prop}
Let $\Lambda$ be a finite dimensional $n$-$T$-Koszul algebra and assume that $\Lambda^!$ is graded right coherent. The following statements hold:
\begin{enumerate}[(1)]
    \item We have $K(M\langle i \rangle) = K(M) \langle -i \rangle [-ni]$ for $M \in \D^{b}(\gr \Lambda)$.
    \item We have $K(D\Lambda) \simeq DT$, where the $\Lambda^!$-module structure on $DT$ is induced by $\Lambda^!_0 \simeq \End_{\gr \Lambda}(T) \simeq \End_{\Lambda_0}(T)$ acting on $T$ on the left by endomorphisms. 
\end{enumerate}
\end{prop}

\begin{proof}
Note that \textit{(1)} follows from \cite{Madsen 2011}*{Proposition 3.2.1 (c)} by using \cref{rem: another proof} as in \cref{prop: Koszul dual is n-DT-Koszul}.

Hence, we now show \textit{(2)}. 
For this, we assume the notation and setup used in the proof of \cref{Existence of T-Koszul equivalence}, that is, \mbox{$\mathcal{A} = C^{-,b}_{\dg}(\grproj \Lambda) \supset \mathcal{B} = \{P\langle i \rangle [ni] \, \lvert \, i \in \mathbb{Z}\}$}. Then the dg functor $\mathcal{A} \rightarrow C_{\dg}(\mathcal{B}), A \mapsto \mathcal{A}(\mathcal{B}, A)$ given in \cref{thm: keller's morita thm for algebraic triangulated categories} induces an equivalence
\[
F \colon \D^b(\gr \Lambda) \rightarrow \D^{\perf}(\B), ~X \mapsto \bigoplus_{i \in \mathbb{Z}}\mathcal{A}(P, pX\langle i \rangle [ni]),
\]
where $pX$ is a graded projective resolution of $X$. For each $j \in \mathbb{Z}$, we have $(\Hm^j \mathcal{A})(P, pX\langle i \rangle [ni]) \simeq \Ext^{ni + j}_{\gr \Lambda}(T, X\langle i \rangle)$. Thus, \mbox{$\mathcal{A}$$(P, p(D\Lambda)\langle i \rangle [ni])$} is acyclic if $i \neq 0$, and quasi-isomorphic to $\Hom_{\gr \Lambda}(T, D\Lambda) \simeq DT$ if \mbox{$i = 0$}. This yields $F(D\Lambda) \simeq DT$. 

Recall that in \cref{thm: keller's morita thm for algebraic triangulated categories}, one uses the zig-zag of dg-categories
\[
\begin{tikzcd}
\Hm^{0} \mathcal{B} & \arrow[two heads]{l} \tau_{\leq 0} \mathcal{B} \arrow[hookrightarrow]{r} & \mathcal{B}
\end{tikzcd}
\]
to induce the equivalence $\D^{\perf}(\Hm^{0} \mathcal{B}) \simeq \D^{\perf}(\mathcal{B})$ used in the construction of $K$. Chasing $F(D\Lambda)$ through the equivalences induced by the zig-zag above, we notice that this stalk complex has the $\Lambda^!$-action one expects, i.e.\ the action induced by \mbox{$\Lambda^!_0 \simeq \End_{\gr \Lambda}(T) \simeq \End_{\Lambda_0}(T)$} acting on $T$ on the left by endomorphisms. This shows \textit{(2)}, and we are done.
\end{proof}

We finish this section by showing that an analogue of the BGG-correspondence holds in our generality. Recall that $\qgr \Lambda^!$ is defined as the localization of $\gr \Lambda^!$ at the full subcategory of finite dimensional graded $\Lambda^!$-modules. We hence have a natural functor \mbox{$\D^b(\gr \Lambda^!) \rightarrow \D^b(\qgr \Lambda^!)$}. In the case where $\Lambda$ is graded Frobenius, there is a well-known equivalence \mbox{$\D^b(\gr \Lambda) / \D^{\perf}(\gr \Lambda) \simeq \stgrmodu \Lambda$} \cite{Buchweitz}*{Theorem 4.4.1}, \cite{Rickard}*{Theorem 2.1}. One consequently obtains a functor 
\[
\D^b(\gr \Lambda) \rightarrow \D^b(\gr \Lambda) / \D^{\perf}(\gr \Lambda) \xrightarrow{\simeq} \stgrmodu \Lambda.
\]
These two functors give the vertical arrows in the diagram in our proposition below.

\begin{prop} \label{answer motivating question}
Let $\Lambda$ be a finite dimensional $n$-$T$-Koszul algebra and assume that $\Lambda^!$ is graded right coherent. If $\Lambda$ is graded Frobenius, then the equivalence $K$ descends to yield \mbox{$\stgrmodu \Lambda \simeq \D^b(\qgr \Lambda^!)$}, as indicated in the following diagram
\[
    \begin{tikzcd}[column sep=22, row sep=20] \D^b(\gr \Lambda) \arrow[r,"K"] \arrow[d] & \D^b(\gr \Lambda^!) \arrow[d]  \\
    \stgrmodu \Lambda \arrow[r,dashed,"\simeq"] & \D^b(\qgr \Lambda^!).
    \end{tikzcd}
\]
\end{prop}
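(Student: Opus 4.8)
The plan is to verify that the equivalence $K$ sends the subcategory $\D^{\perf}(\gr \Lambda) \subseteq \D^b(\gr \Lambda)$ onto a subcategory of $\D^b(\gr \Lambda^!)$ whose Verdier quotient gives $\D^b(\qgr \Lambda^!)$, and then to invoke the universal property of Verdier localization to produce the dashed arrow. First I would recall, via \cref{thm: Rickard}, that since $\Lambda$ is graded Frobenius the left vertical functor is the Verdier quotient $\D^b(\gr \Lambda) \to \D^b(\gr \Lambda)/\D^{\perf}(\gr \Lambda) \xrightarrow{\simeq} \stgrmodu \Lambda$, so the kernel of the left column is exactly $\D^{\perf}(\gr \Lambda)$, which as a thick triangulated subcategory closed under graded shift is $\Thicks(\Lambda)$. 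On the right-hand side, by definition $\D^b(\qgr \Lambda^!)$ is the Verdier quotient of $\D^b(\gr \Lambda^!)$ by the thick subcategory generated by the finite dimensional graded $\Lambda^!$-modules; since $\Lambda^!$ has finite global dimension, this subcategory is $\D^b_{\mathrm{fd}}(\gr \Lambda^!) = \Thicks$ of the simple $\Lambda^!$-modules, which is precisely the subcategory of objects with finite dimensional total cohomology and hence coincides with $\Thicks$ of $\Lambda^!_0$ (using $\gldim \Lambda_0 < \infty$ and that the simples are built from $\Lambda^!_0$). So the right column is the Verdier quotient by $\Thicks(\Lambda^!_0)$.

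The crux is therefore to show
\[
K\bigl(\Thicks(\Lambda)\bigr) = \Thicks(\Lambda^!_0)
\]
inside $\D^b(\gr \Lambda^!)$. Here I would use that $K$ is built (in the proof of \cref{Existence of T-Koszul equivalence}) from a standard lift with respect to $\U = \{T\langle i\rangle[ni]\}$, so that $K$ sends each object $T\langle i \rangle [ni]$ to the corresponding projective generator $\Lambda^!\langle -i\rangle[-ni]$ of $\D^b(\gr \Lambda^!)$, up to the identification of $\Hm^0\mathcal{B}$ with $\Lambda^!$. Dually, classical $T$-Koszul duality interchanges the roles of the "projective" side and the "$\Lambda_0$" side: $K$ should carry $\Lambda$ itself to the complex computing $\mathbf{R}\Hom_{\gr \Lambda}(T, \Lambda)$ over $\Lambda^!$, and I would argue that this lands in $\Thicks(\Lambda^!_0)$ because $\Lambda_0$ is a tilting module with $\Lambda \in \Thicks(\Lambda)$ and, under $K$, the tilting relation $0 \to \Lambda \to T^0 \to \cdots \to T^l \to 0$ together with $K(T\langle i\rangle[ni]) = \Lambda^!\langle -i\rangle[-ni]$ forces $K(\Lambda)$ into the thick subcategory generated by the graded shifts of $\Lambda^!$; but finite global dimension of $\Lambda^!_0$ lets one replace $\Lambda^!$ by $\Lambda^!_0$ up to thick closure, and the finite-dimensionality is detected on cohomology, which $K$ preserves in the appropriate bounded sense. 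Concretely it suffices to check the two inclusions separately: $K(\Lambda) \in \D^b_{\mathrm{fd}}(\gr\Lambda^!)$ because $\mathbf{R}\Hom_{\gr\Lambda}(T,\Lambda)$ has total finite dimensional cohomology (each $\Ext^i_{\gr\Lambda}(T,\Lambda\langle j\rangle)$ is finite dimensional and vanishes for all but finitely many $(i,j)$, using that $T$ is a tilting $\Lambda_0$-module and $\Lambda$ finite dimensional), and conversely $K^{-1}$ applied to the simples lands in $\Thicks(\Lambda)$ by the symmetric argument using that $\Lambda^!$ has finite global dimension so its simples are perfect.

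Once the equality $K(\Thicks(\Lambda)) = \Thicks(\Lambda^!_0)$ is in hand, the universal property of Verdier quotients gives a unique triangulated functor $\stgrmodu \Lambda \to \D^b(\qgr \Lambda^!)$ making the square commute; applying the same reasoning to $K^{-1}$ yields a functor in the other direction, and the two compositions are identity because they are so after precomposition with the (essentially surjective, full) quotient functors. Hence the dashed arrow is an equivalence. I expect the main obstacle to be the bookkeeping in identifying $K(\Lambda)$ explicitly and showing it generates the same thick-closed-under-shift subcategory as $\Lambda^!_0$ — i.e. transporting the "tilting" exact sequence through the dg-categorical construction of $K$ — rather than anything conceptually deep; everything else is a formal consequence of Rickard's theorem, the definition of $\qgr$, and the universal property of localization.
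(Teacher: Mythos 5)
Your overall strategy matches the paper's: realize both vertical functors as Verdier quotients, show that $K$ exchanges the corresponding kernels, and appeal to the universal property of localization. There are, however, two places where the execution has a gap. The main one is the reverse inclusion $\Thicks(\Lambda^!_0)\subseteq K(\Thicks(\Lambda))$. Your forward inclusion — that $K(\Lambda)$ has finite-dimensional total cohomology, so $K(\Thicks(\Lambda))\subseteq\Thicks(\Lambda^!_0)$ — is fine, but for the converse you say that ``$K^{-1}$ applied to the simples lands in $\Thicks(\Lambda)$ by the symmetric argument, using that $\Lambda^!$ has finite global dimension so its simples are perfect.'' This does not work: perfection of a simple $S$ over $\Lambda^!$ just says $S\in\Thicks(\Lambda^!)=\D^b(\gr\Lambda^!)$, which is vacuous and places no constraint on $K^{-1}(S)$; and a cohomology bound on $K^{-1}(S)$ cannot locate it inside $\Thicks(\Lambda)=\D^{\perf}(\gr\Lambda)$, since \emph{every} object of $\D^b(\gr\Lambda)$ has finite-dimensional cohomology (as $\Lambda$ is finite dimensional). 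The paper sidesteps the need for a separate reverse inclusion by pinning down $K$ on a generator exactly: since $D\Lambda$ is injective, $\mathbf{R}\Hom_{\gr\Lambda}(T,D\Lambda\langle i\rangle)$ is concentrated in a single cohomological degree, where it is $DT$. Hence $K(D\Lambda\langle i\rangle)$ is a shift of the one module $DT$, giving $K(\Thicks(D\Lambda))=\Thicks(DT)$ outright; together with the fact that $DT$ is a tilting $\Lambda^!_0$-module, so $\Thicks(DT)$ is precisely the objects with finite-dimensional cohomology, this yields the required equality. (Because $\Lambda$ is graded Frobenius, $D\Lambda\simeq\Lambda\langle-a\rangle$, so you could run the same computation with $\Lambda$, but the essential point is to identify $K(\Lambda)$ as a specific module rather than merely bound its cohomology.)

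Two further slips. You claim that ``by definition $\D^b(\qgr\Lambda^!)$ is the Verdier quotient of $\D^b(\gr\Lambda^!)$ by the thick subcategory generated by finite-dimensional modules,'' but this is not a definition: $\qgr\Lambda^!$ is a Serre localization of the abelian category $\gr\Lambda^!$, and the bounded derived category of a localization need not be the Verdier quotient of the bounded derived categories. The paper uses that the quotient functor $\gr\Lambda^!\to\qgr\Lambda^!$ has a left adjoint before invoking \cite{Stacks project}*{Lemma 13.17.2-3}. Finally, the tilting coresolution is $0\to\Lambda_0\to T^0\to\cdots\to T^l\to 0$, not $0\to\Lambda\to T^0\to\cdots$; it establishes $\Lambda_0\in\Thicks(T)$, which is a different (and, for the present proof, irrelevant) containment, so the paragraph built on that sequence should be discarded in favour of the direct computation indicated above.
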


\begin{proof}
It suffices to prove that $K$ restricts to an equivalence between $\D^{\perf}(\gr \Lambda)$ and $\D^{\fd}(\gr \Lambda^!)$, where $\D^{\fd}(\gr \Lambda^!)$ denotes the full subcategory of $\D^b(\gr \Lambda^!)$ consisting of objects with finite dimensional total cohomology. 
In fact, we have equivalences
\[
\stgrmodu \Lambda \simeq \D^b(\gr \Lambda)/\D^{\perf}(\gr \Lambda)
\stackrel{K}{\simeq}
\D^b(\gr \Lambda^!)/\D^{\fd}(\gr \Lambda^!) 
\simeq
\D^b(\qgr \Lambda^!),
\]
where the first equivalence is \cite{Buchweitz}*{Theorem 4.4.1} and the last one is \ \cite{Stacks project}*{Lemma 13.17.3}. 

Hence, we now show that $\D^{\fd}(\gr \Lambda^{!}) = K(\D^{\perf}(\gr \Lambda))$. We begin by noting that \mbox{$\Thicks (D\Lambda) = \D^{\perf}(\gr \Lambda)$}. Since \mbox{$K(D \Lambda \langle i \rangle) \simeq DT\langle -i \rangle [-ni]$} by \cref{K and int deg prop}, we get that $K$ restricts to an equivalence \mbox{$\Thicks (D\Lambda) \xrightarrow{\simeq} \Thicks(DT)$}. As tilting theory implies that $DT$ is a tilting module over $\End_{\Lambda_0}(T)$, one deduces that \mbox{$\Thicks(DT) = \D^{\fd}(\gr \Lambda^!)$}, and hence we are done.
\end{proof}

\section{On $n$-hereditary algebras} \label{n-hereditary}
The class of \textit{$n$-hereditary algebras} was introduced in \cite{Herschend-Iyama-Oppermann} and consists of the disjoint union of $n$-representation finite and $n$-representation infinite algebras. In this section we recall some definitions and basic results from \cites{Iyama-Oppermann APR,Herschend-Iyama-Oppermann,Iyama-Oppermann}. This forms a necessary background for exploring connections between the notion of $n$-$T$-Koszulity and higher hereditary algebras, which is the topic our next two sections. Note that \cref{n-hereditary} does not contain any new results.

Throughout this section, let $A$ be a finite dimensional algebra. Recall that if $A$ has finite global dimension, then the derived Nakayama functor \mbox{$\nu(-) = - \otimes^{\mathbf{L}}_{A} DA$} is a Serre functor on $\D^b(\modu A)$. We use the notation \mbox{$\nu_n = \nu(-)[-n]$}. The algebra $A$ is called \textit{$n$-representation finite} if $\gldim A \leq n$ and $\modu A$ contains an $n$-cluster tilting object. We have the following criterion for $n$-representation finiteness in terms of the subcategory
\[
\U = \add\{\nu_n^i A \mid i \in \Z\} \subseteq \D^b(\modu A).
\]

\begin{thm}[See \cite{Iyama-Oppermann}*{Theorem 3.1}] \label{nrepfindef}
Assume $\gldim A \leq n$. The following are equivalent:
\begin{enumerate}
    \item $A$ is $n$-representation finite.
    \item $DA \in \U$.
    \item $\nu \U = \U$.
\end{enumerate}
\end{thm}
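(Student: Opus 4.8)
The plan is to establish the cycle of implications $(1)\Rightarrow(3)\Rightarrow(2)\Rightarrow(1)$, after first recording a reformulation that makes the passage between $(2)$ and $(3)$ transparent. Since $\nu_n$ cyclically permutes the additive generators $\nu_n^iA$ of $\U$, we have $\nu_n\U=\U$; recalling that $\nu_n=\nu[-n]$, this gives $\nu\U=(\nu_n\U)[n]=\U[n]$, so condition $(3)$ says exactly that $\U$ is closed under the shift $[n]$, i.e.\ $\U[n]=\U$. Moreover $DA=\nu A=\nu_n(A)[n]$, so applying the autoequivalence $\nu_n^{-1}$, which preserves $\U$, shows that $DA\in\U$ if and only if $A[n]\in\U$; since $\U$ is the additive closure of the $\nu_n$-orbit of $A$ and $\nu_n$ acts on that orbit by permutation, $A[n]\in\U$ is in turn equivalent to $\U[n]\subseteq\U$. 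Thus $(3)$ is the conjunction of $\U[n]\subseteq\U$ and $\U\subseteq\U[n]$, while $(2)$ is just $\U[n]\subseteq\U$; hence $(3)\Rightarrow(2)$ is immediate, and the real content of $(2)\Rightarrow(3)$ would be upgrading $\U[n]\subseteq\U$ to an equality.

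For $(1)\Rightarrow(3)$ I would invoke the structure theory of $n$-representation-finite algebras: if $M$ is a basic $n$-cluster-tilting $A$-module, then $\add M=\U$, this category has only finitely many indecomposables up to isomorphism, and $M$ has every indecomposable injective module as a summand, so $DA\in\add M=\U$. By the reformulation this already yields $\U[n]\subseteq\U$. Now $[n]$ is an autoequivalence of $\D^b(\modu A)$ which restricts to a fully faithful additive endofunctor of the Krull--Schmidt category $\U$; since $\U$ has only finitely many indecomposable objects, this endofunctor is injective on their isomorphism classes, hence bijective on them, hence essentially surjective. Therefore $\U[n]=\U$, which is $(3)$.

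The substance of the theorem is $(2)\Rightarrow(1)$. Starting from $DA\in\U$, equivalently $\U[n]\subseteq\U$, the goal is to extract an $n$-cluster-tilting subcategory of $\modu A$ out of $\U$. The first step is to show, using $\gldim A\le n$, that having both $A$ and $DA$ in $\U$ together with the closure of $\U$ under $\nu_n^{\pm 1}$ forces every $\nu_n^iA$, $i\in\Z$, to be concentrated in cohomological degree $0$: the cohomological amplitude of $\nu^{\pm 1}$ is bounded in terms of $n=\gldim A$, and one propagates this bound along the $\nu_n$-orbit of $A$, using the presence of $DA$ to close it up, so that each $\nu_n^iA$ is pinned down to an honest module. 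Setting $\mathcal{M}:=\add\{\nu_n^iA:i\in\Z\}\subseteq\modu A$, it then remains to verify the $n$-cluster-tilting axioms: $\mathcal{M}$ contains $A$ and $DA$ by hypothesis, and both the $n$-rigidity $\Ext^i_A(\mathcal{M},\mathcal{M})=0$ for $0<i<n$ and the existence of $\mathcal{M}$-approximations follow by rewriting the relevant $\Ext$-groups as $\Hom$-spaces in $\D^b(\modu A)$ via the Serre functor $\nu$ and using that $\nu_n$ permutes the objects $\nu_n^\ell A$. Since $\gldim A\le n$, the existence of this $n$-cluster-tilting module gives that $A$ is $n$-representation finite. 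The main obstacle is precisely this step: squeezing out of the single datum $DA\in\U$ both the degreewise control that makes all $\nu_n^iA$ modules and the verification of the cluster-tilting axioms; once that is done, the remaining implications are either formal (the reformulation) or a direct appeal to the known structure of $n$-representation-finite algebras.
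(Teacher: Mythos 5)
The paper does not prove this statement; it is quoted verbatim from Iyama--Oppermann, so there is no in-paper argument to compare against. Judged on its own merits, your proposal contains two concrete errors that undermine the cycle of implications, beyond the admitted sketchiness of $(2)\Rightarrow(1)$.

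First, in your argument for $(1)\Rightarrow(3)$ you assert that $\U=\add M$ for the basic $n$-cluster-tilting module $M$ and conclude that $\U$ has only finitely many indecomposables. This is false: $\U$ is a subcategory of $\D^b(\modu A)$, not of $\modu A$, and it is closed under all powers of $\nu_n$. For an $n$-representation-finite algebra, once one reaches the injective $\nu_n^{-m_i}(P_i)\simeq I_{\sigma(i)}$, the next step gives $\nu_n^{-m_i-1}(P_i)=\nu^{-1}(I_{\sigma(i)})[n]=P_{\sigma(i)}[n]$, which is a shifted module, not a module. Iterating, $\U$ contains $P_j[mn]$ for infinitely many $m$, so it has infinitely many indecomposable objects (already for $A=k$, where $\U=\add\{k[mn]:m\in\Z\}$). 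Your pigeonhole argument that a fully faithful endofunctor of a Krull--Schmidt category with finitely many indecomposables must be essentially surjective therefore does not apply, and the upgrade of $\U[n]\subseteq\U$ to $\U[n]=\U$ is unproved.

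Second, and more seriously, the first step of your $(2)\Rightarrow(1)$ claims that $DA\in\U$ forces \emph{every} $\nu_n^iA$, $i\in\Z$, to be concentrated in cohomological degree $0$. The same computation shows this is false even when $A$ actually is $n$-representation finite: $\nu_n^{-m_i-1}(P_i)\simeq P_{\sigma(i)}[n]$ lives in cohomological degree $-n$. So the premise you plan to extract (``each $\nu_n^iA$ is an honest module'') cannot hold, and the strategy built on it cannot go through. The correct picture is that only a finite initial segment $\nu_n^{-i}(A)$, $0\le i\le\max_j m_j$, consists of modules, and beyond that the orbit wraps around by an $n$-shift; the candidate $n$-cluster-tilting module is the direct sum of this finite segment, and proving that it is $n$-cluster tilting (rigidity and the approximation property) is the real content, which your sketch only asserts. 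Your reduction of $(2)$ to $A[n]\in\U$, equivalently $\U[n]\subseteq\U$, and of $(3)$ to $\U[n]=\U$, is correct, as is $(3)\Rightarrow(2)$; but both the closing of $(1)\Rightarrow(3)$ and essentially all of $(2)\Rightarrow(1)$ need to be redone.
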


In particular, an algebra $A$ with $\gldim A \leq n$ is $n$-representation finite if and only if there for any indecomposable projective $A$-module $P_i$ is an integer \mbox{$m_i \geq 0$} such that $\nu_n^{-m_i}(P_i)$ is indecomposable injective. We will need the following well-known property of $n$-representation finite algebras. 

\begin{lem}[See \cite{Herschend-Iyama-Oppermann}*{Proposition 2.3}] \label{n-rep-fin}
Let $A$ be $n$-representation finite. For each indecomposable projective $A$-module $P_i$, we then have \mbox{$\Hm^l(\nu_n^{-m}(P_i))=0$} for \mbox{$l\neq 0$} and \mbox{$0 \leq m \leq m_i$}, where $m_i$ is given as above.
\end{lem}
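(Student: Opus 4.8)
The plan is to argue by induction on $m$, reducing everything to a single application of $\nu_n^{-1}$. Fix a basic $n$-cluster tilting $A$-module and let $\mathcal{M}$ be the associated $n$-cluster tilting subcategory of $\modu A$; recall that $A, DA \in \mathcal{M}$, that $\Ext^j_A(\mathcal{M}, \mathcal{M}) = 0$ for $0 < j < n$, that $\mathcal{M}$ is closed under $\tau_n^- \coloneqq H^0(\nu_n^{-1}(-))$, and that $\nu_n^{-1}$ is an autoequivalence of $\D^b(\modu A)$.

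For $m = 0$ there is nothing to prove, since $\nu_n^0(P_i) = P_i$ is a module lying in $\mathcal{M}$. For the inductive step, suppose $0 \le m < m_i$ and that $X \coloneqq \nu_n^{-m}(P_i)$ is a module concentrated in degree $0$ and lies in $\mathcal{M}$. As $\nu_n^{-1}$ is an autoequivalence, $X$ is indecomposable, and since $m < m_i$ it is not an indecomposable injective module, so $X$ has no injective direct summand. I then want to show that $\nu_n^{-1}(X)$ is concentrated in degree $0$; it will then equal $\tau_n^-(X) \in \mathcal{M}$, which both continues the induction and, in the case $m + 1 = m_i$, agrees with the prescribed indecomposable injective.

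To compute the cohomology of $\nu_n^{-1}(X)$, observe that $\nu_n = (- \otimes^{\mathbf{L}}_A DA)[-n]$ has right adjoint $X \mapsto \mathbf{R}\Hom_A(DA, X)[n]$ by the tensor-Hom adjunction; since $\nu_n$ is an autoequivalence, this right adjoint is its inverse, so $H^l(\nu_n^{-1}(X)) \cong \Ext^{l+n}_A(DA, X)$. As $\gldim A \le n$, this vanishes for $l > 0$; for $-n < l < 0$ it vanishes because $DA, X \in \mathcal{M}$ and $\Ext^j_A(\mathcal{M}, \mathcal{M}) = 0$ for $0 < j < n$; and for $l < -n$ it vanishes trivially. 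The only remaining term is $l = -n$, i.e.\ $\Hom_A(DA, X)$, and establishing that this vanishes is the step I expect to be the main obstacle.

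For that vanishing my plan is to apply the duality $D$: since $n$-representation finiteness is preserved under $(-)^{\op}$ — equivalently, $D$ sends $\mathcal{M}$ to an $n$-cluster tilting subcategory of $\modu A^{\op}$ — we have $\Hom_A(DA, X) \cong \Hom_{A^{\op}}(DX, A)$, with $DX$ an indecomposable object of that subcategory having no projective summand; so it suffices to show that such an object admits no nonzero morphism to the regular module $A$. When $n = 1$ this is the classical fact that a submodule of a projective over a hereditary algebra is projective, so that a nonzero map $DX \to A$ would split off a projective summand of $DX$, contradicting indecomposability; for general $n$ the analogous conclusion follows from the description of the indecomposables of an $n$-cluster tilting subcategory of an $n$-representation finite algebra as a single $\nu_n$-orbit pattern whose projectives and injectives occur precisely as its initial and terminal objects — this is the substance of \cref{nrepfindef} and the analysis in \cite{Herschend-Iyama-Oppermann}. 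Granting it, $\nu_n^{-1}(X)$ is concentrated in degree $0$, and the induction gives $H^l(\nu_n^{-m}(P_i)) = 0$ for $l \ne 0$ and $0 \le m \le m_i$, as claimed.
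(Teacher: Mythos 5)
First, note that the paper does not prove this lemma; it is imported verbatim with a pointer to \cite{Herschend-Iyama-Oppermann}*{Proposition 2.3}, so there is no in-text argument to compare against and your proposal has to stand on its own.

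Your reduction is correct and cleanly isolates the crux. Indeed $\nu_n^{-1}\simeq\mathbf{R}\Hom_A(DA,-)[n]$, so for a module $X\in\mathcal{M}$ one has $H^l(\nu_n^{-1}(X))\cong\Ext_A^{l+n}(DA,X)$; the ranges $l>0$ (global dimension $\leq n$), $-n<l<0$ ($n$-rigidity of $\mathcal{M}$, valid because $DA,X\in\mathcal{M}$) and $l<-n$ (trivial) vanish exactly as you say. This correctly reduces the inductive step to $\Hom_A(DA,X)=0$ for $X=\nu_n^{-m}(P_i)$ with $0\leq m<m_i$.

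The problem is that you do not actually establish this vanishing for $n>1$. The argument you give for $n=1$ --- a nonzero map $DX\to A$ would have projective image, since submodules of projectives over a hereditary algebra are projective, which would split off a projective summand of the indecomposable non-projective $DX$ --- is genuinely special to global dimension one and does not survive: over an $n$-hereditary algebra with $n>1$, submodules of projectives are not projective. For the higher case you invoke ``the description of the indecomposables of an $n$-cluster tilting subcategory\dots as a single $\nu_n$-orbit pattern whose projectives and injectives occur precisely as its initial and terminal objects,'' attribute this to \cref{nrepfindef} and ``the analysis in \cite{Herschend-Iyama-Oppermann},'' and then say ``granting it.'' That is a genuine gap, and arguably circular: the structural fact you are ``granting'' --- that the $\nu_n^-$-orbit of a projective consists of modules until it terminates at an injective --- is precisely (an equivalent form of) the lemma you are trying to prove. \cref{nrepfindef} (the characterization $DA\in\mathcal{U}$, $\nu\mathcal{U}=\mathcal{U}$) does not by itself give $\Hom_A(DA,X)=0$; note also that the duality step $\Hom_A(DA,X)\cong\Hom_{A^{\op}}(DX,A)$ merely trades the statement for its mirror image over $A^{\op}$ and buys nothing new for $n>1$. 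Until the vanishing $\Hom_A(DA,X)=0$ is supplied by an argument that does not already presuppose the conclusion, the proposal reduces the lemma to an equivalent assertion rather than proving it.
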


Moving on to the second part of the $n$-hereditary dichotomy, recall that $A$ is called \textit{$n$-representation infinite} if $\gldim A \leq n$ and \mbox{$\Hm^{i}(\nu^{-j}_n(A)) = 0$} for \mbox{$i \neq 0$} and \mbox{$j \geq 0$}. The following well-known basic lemma will be needed in our next two sections. 

\begin{lem}\label{nrepinflem}
Let $\gldim A < \infty$. Then 
\[\gldim A = \max \{ i \geq 0 \,\, \lvert \,\, \Ext^i_{A}(DA, A) \neq 0 \}.\]
\end{lem}

We use this in combination with the fact that 
\[
\Hm^{i}(\nu^{-1}_n(A)) \simeq \Ext^{n+i}_A(DA,A),
\]
which holds since 
\[
\nu^{-1}(-) =\mathbf{R}\Hom_{A}( D A,-).
\]

Like in the classical theory of hereditary algebras, the class of $n$-hereditary algebras also has an appropriate version of (higher) preprojective algebras which is nicely behaved. Given an $n$-hereditary algebra $A$, we denote the \textit{$(n+1)$-preprojective algebra of $A$} by $\Pi_{n+1}A$. Recall from \cite{Iyama-Oppermann}*{Lemma 2.13} that
\[
\Pi_{n+1}A \simeq \bigoplus_{i \geq 0}\Hom_{D^b(A)}(A, \nu^{-i}_{n}(A)).
\]
If $A$ is $n$-representation finite, the associated $(n+1)$-preprojective is finite dimensional and self-injective, whereas in the $n$-representation infinite case, the $(n+1)$-preprojective is infinite dimensional graded bimodule $(n+1)$-Calabi--Yau of Gorenstein parameter $1$.

\begin{remark} 
Note that terminology related to the classes of algebras discussed in this section varies in the literature. For instance, an $n$-representation finite algebra is called `$n$-representation-finite $n$-hereditary' in \cite{Jasso-Kulshammer}. This terminology is very reasonable, but as we need to mention $n$-representation finite algebras frequently, we stick to the notion from \cite{Iyama-Oppermann APR} for brevity. 
\end{remark}

\section{Higher Koszul duality and $n$-representation infinite algebras} \label{T-Koszul and n-rep inf}

In this section we investigate connections between $n$-representation infinite algebras and the notion of higher Koszulity. Let us first present our standing assumptions.

\begin{setup}\label{setup}
Throughout the rest of this section, we make the following standing assumptions:
\begin{enumerate}
    \item Let $\Lambda=\oplus_{i \geq 0}\Lambda_i$ be a finite dimensional graded Frobenius algebra of highest degree $a \geq 1$ with $\gldim \Lambda_0 < \infty$.
    \item Let $T$ be a basic graded $\Lambda$-module which is concentrated in degree $0$ and a tilting module over $\Lambda_0$. We assume $T_\mu \simeq T$ as $\Lambda$-modules for the Nakayama automorphism $\mu$ of $\Lambda$.
\end{enumerate}
For our fixed positive integer $n$, we use the notation
\[
\widetilde{T}=\bigoplus_{i=0}^{a-1} \Omega^{-ni}T\langle i \rangle
\]
and denote the endomorphism algebra $\End_{\stgrmodu\Lambda}(\widetilde{T})$ by $B$. We write \mbox{$T \simeq \oplus^{t}_{i=1} T^{i}$} for the decomposition of $T$ into indecomposable summands. 
\end{setup}

One should note that in the classical case, where $T$ is given as the direct sum of all the simple modules, the assumption $T \simeq T_\mu$ is automatically satisfied. Observe moreover that as we have assumed $T \simeq T_\mu$, we immediately obtain \mbox{$\Omega T_{\mu}\langle -a \rangle \simeq \Omega T\langle -a \rangle$}. Additionally, it follows from the assumption $T \simeq T_\mu$ that twisting by the Nakayama automorphism $\mu$ of $\Lambda$ only permutes the indecomposable summands of $T$. This means that we have a permutation, for simplicity also denoted by $\mu$, on the set $\{1,\ldots,t\}$ such that $T^{i}_{\mu} \simeq T^{\mu(i)}$.

The following characterization of $n$-$T$-Koszul algebras satisfying the standing assumptions above is the main result of this section.

\begin{thm} \label{Characterization}
Assume \cref{setup}. The following statements are equivalent:
\begin{enumerate}
    \item $\Lambda$ is $n$-$T$-Koszul. 
    \item $\widetilde{T} = \oplus_{i=0}^{a-1}\Omega^{-ni}T \langle i \rangle$ is a tilting object in $\stgrmodu \Lambda$ and $B= \End_{\stgrmodu \Lambda}(\widetilde{T})$ is $(na-1)$-representation infinite.
\end{enumerate}
\end{thm}

It should be noted that in the classical case where $\Lambda$ is $n$-$T$-Koszul for $n=1$ and $T=\Lambda_0$, our tilting object $\widetilde{T} \in \stgrmodu \Lambda$ corresponds under the BGG-correspondence \mbox{$\stgrmodu \Lambda \simeq \D^b(\qgr \Lambda^!)$} to the tilting object \mbox{$\oplus_{i=0}^{a-1} \Lambda^{!} \langle -i\rangle\in \D^b(\qgr \Lambda^!)$} considered in \cite{Minamoto & Mori 2011}.
In fact, this follows by using \cref{K and int deg prop} and that $K(T) \simeq \Lambda^!$.

Before giving the proof of \cref{Characterization}, we show some useful lemmas. Our first aim is to describe the endomorphism algebra $B$ as an upper triangular matrix algebra of finite global dimension, see \cref{gldim lemma}. We start by recalling the following lemma.

\begin{lem}[See \cite{Fossum-Griffith-Reiten}*{Corollary 4.21 (4)}] \label{utriangular}
Let $A$ and $A'$ be finite dimensional algebras and $M$ an $A^{\op} \otimes_k A'$-module. Then the algebra 
\[
\begin{bmatrix}
A & M \\
0 & A'
\end{bmatrix}
\]
has finite global dimension if and only if both $A$ and $A'$ have finite global dimension.
\end{lem}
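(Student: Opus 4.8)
The plan is to pass to the well-known description of the module category of the triangular matrix algebra $\Gamma = \left[\begin{smallmatrix} A & M \\ 0 & A' \end{smallmatrix}\right]$. A right $\Gamma$-module is the same thing as a triple $(X,Y;f)$ consisting of $X\in\modu A$, $Y\in\modu A'$ and a right $A'$-linear map $f\colon X\otimes_A M\to Y$, where a morphism $(X,Y;f)\to(X',Y';f')$ is a pair of module homomorphisms $(\alpha,\beta)$ satisfying $f'\circ(\alpha\otimes\mathrm{id}_M)=\beta\circ f$. Under this identification the indecomposable projective $\Gamma$-modules are precisely the triples $(P,P\otimes_A M;\mathrm{id})$ with $P$ indecomposable projective over $A$, together with the triples $(0,Q;0)$ with $Q$ indecomposable projective over $A'$, and the simple $\Gamma$-modules are the triples $(S,0;0)$ with $S$ simple over $A$ and $(0,S';0)$ with $S'$ simple over $A'$. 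Since the global dimension of a finite dimensional algebra equals the supremum of the projective dimensions of its simple modules, it suffices to control $\projdim_\Gamma$ of these families.

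First I would record the behaviour of three elementary functors. The inflation $\mathbb{Z}\colon\modu A'\to\modu\Gamma$, $Y\mapsto(0,Y;0)$, is exact, fully faithful and sends projectives to projectives; moreover $\Hom_\Gamma\bigl((P,P\otimes_A M;\mathrm{id}),(0,Y;0)\bigr)=0$, so a projective cover of $(0,Y;0)$ together with all its higher syzygies stays in the image of $\mathbb{Z}$, whence $\projdim_\Gamma\mathbb{Z}(Y)=\projdim_{A'}Y$. The restriction $(-)e_1\colon\modu\Gamma\to\modu A$, $(X,Y;f)\mapsto X$, is exact and sends projectives to projectives. Finally, its left adjoint $\mathbb{T}\colon\modu A\to\modu\Gamma$, $X\mapsto(X,X\otimes_A M;\mathrm{id})$, is right exact, sends projectives to projectives, and has left derived functors $L_i\mathbb{T}(X)\cong\mathbb{Z}\bigl(\operatorname{Tor}_i^A(X,M)\bigr)$ for $i\geq 1$. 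Granting these, the implication ``$\gldim\Gamma<\infty\Rightarrow\gldim A<\infty$ and $\gldim A'<\infty$'' is immediate: for any $A'$-module $Y$ one has $\projdim_{A'}Y=\projdim_\Gamma\mathbb{Z}(Y)\leq\gldim\Gamma$, and applying the exact, projective-preserving functor $(-)e_1$ to a finite $\Gamma$-projective resolution of a simple module $(S,0;0)$ yields a finite $A$-projective resolution of $S$.

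For the converse, assume $\gldim A<\infty$ and $\gldim A'<\infty$. A simple of the form $(0,S';0)=\mathbb{Z}(S')$ has $\projdim_\Gamma(0,S';0)\leq\gldim A'$ by the first point above. For a simple $(S,0;0)$, the short exact sequence $0\to\mathbb{Z}(S\otimes_A M)\to\mathbb{T}(S)\to(S,0;0)\to 0$ reduces the claim to proving $\projdim_\Gamma\mathbb{T}(S)<\infty$, since $\projdim_\Gamma\mathbb{Z}(S\otimes_A M)\leq\gldim A'$, and this last point is the one I expect to be the real obstacle: the inflation of a projective $A$-module along $\Gamma\twoheadrightarrow A$ is not projective over $\Gamma$, and $\mathbb{T}$ is not exact. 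The way around it is to induct on $\projdim_A S$, which is finite by hypothesis: applying $\mathbb{T}$ to a short exact sequence $0\to\Omega_A S\to P\to S\to 0$ with $P$ projective over $A$, and using the computation of $L_1\mathbb{T}$, produces a four-term exact sequence $0\to\mathbb{Z}\bigl(\operatorname{Tor}_1^A(S,M)\bigr)\to\mathbb{T}(\Omega_A S)\to\mathbb{T}(P)\to\mathbb{T}(S)\to 0$ with $\mathbb{T}(P)$ projective over $\Gamma$; splitting it into two short exact sequences trades the defect of $\mathbb{T}$ for the error term $\mathbb{Z}\bigl(\operatorname{Tor}_1^A(S,M)\bigr)$, whose $\Gamma$-projective dimension is at most $\gldim A'$, and yields the recursion $\projdim_\Gamma\mathbb{T}(S)\leq\max\{\projdim_\Gamma\mathbb{T}(\Omega_A S)+1,\;\gldim A'+2\}$, which terminates (as $\mathbb{T}$ of a projective is projective) in $\projdim_\Gamma\mathbb{T}(S)\leq\projdim_A S+\gldim A'+1<\infty$. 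The remaining verifications — the triple description, the identification of projectives and simples, the properties of the three functors, and the manipulations with short exact sequences — are routine bookkeeping.
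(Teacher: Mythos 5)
The paper itself gives no proof of this lemma; it simply cites \cite{Fossum-Griffith-Reiten}*{Corollary 4.21 (4)}, where the statement is established in the more abstract setting of trivial extensions of abelian categories. Your proof is a correct and self-contained concrete argument, and it is essentially the standard one for upper triangular matrix algebras.

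A few remarks on the details, all of which check out. The triple description of $\modu\Gamma$ and the identification of the indecomposable projectives $(P,P\otimes_A M;\mathrm{id})$ and $(0,Q;0)$ are correct given that $M$ is an $(A,A')$-bimodule (which is what ``$A^{\op}\otimes_k A'$-module'' unpacks to). The vanishing $\Hom_\Gamma\bigl((P,P\otimes_A M;\mathrm{id}),(0,Y;0)\bigr)=0$ forces every term of a minimal $\Gamma$-projective resolution of $\mathbb{Z}(Y)$ to come from $\mathbb{Z}$, giving the equality $\projdim_\Gamma\mathbb{Z}(Y)=\projdim_{A'}Y$ rather than only an inequality, which is what your forward implication needs. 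The four-term sequence $0\to\mathbb{Z}(\operatorname{Tor}_1^A(S,M))\to\mathbb{T}(\Omega_A S)\to\mathbb{T}(P)\to\mathbb{T}(S)\to 0$ follows from the long exact sequence of the left-derived functors of $\mathbb{T}$ applied to $0\to\Omega_A S\to P\to S\to 0$, since $L_1\mathbb{T}(P)=0$, and the induction grounds correctly at $\projdim_A S=0$ because $\mathbb{T}$ preserves projectives. The resulting bound $\projdim_\Gamma(S,0;0)\leq\projdim_A S+\gldim A'+1$ in fact recovers the classical estimate $\gldim\Gamma\leq\gldim A+\gldim A'+1$, which is a slightly stronger conclusion than the qualitative finiteness statement of the lemma. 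The only thing I would tidy is that the claim ``$\mathbb{Z}$ preserves projective covers and syzygies'' deserves one sentence: the radical of $(0,Q;0)$ is $(0,\rad_{A'}Q;0)$ since $(0,Q;0)\cdot\rad\Gamma=(0,Q\rad A';0)$, so a minimal cover over $A'$ stays minimal after applying $\mathbb{Z}$.
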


In \cref{gldim lemma} we describe $B$ as an upper triangular matrix algebra associated to the graded algebra $\Gamma = \oplus_{i \geq 0} \Ext^{ni}_{\gr\Lambda}(T,T\langle i \rangle)$. Notice that in the case where $\Lambda$ is $n$-$T$-Koszul, the algebra $\Gamma$ coincides with the $n$-$T$-Koszul dual $\Lambda^!$.

\begin{lem} \label{gldim lemma}
Assume \cref{setup}. The algebra $B = \End_{\stgrmodu \Lambda}(\widetilde{T})$ is isomorphic to the upper triangular matrix algebra
\[
B \simeq \begin{pmatrix}
\Gamma_{0} & \Gamma_{1} & \cdots & \Gamma_{a - 1} \\
0 & \Gamma_{0} & \cdots & \Gamma_{a - 2} \\
\vdots  & \vdots  & \ddots & \vdots  \\
0 & 0 & \cdots & \Gamma_{0} 
\end{pmatrix},
\]
where $\Gamma = \oplus_{i \geq 0} \Ext^{ni}_{\gr\Lambda}(T,T\langle i \rangle)$. In particular, the global dimension of $B$ is finite.
\end{lem}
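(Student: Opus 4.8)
The plan is: (i) reindex the summands of $\widetilde{T}$ so that they correspond to the rows and columns of the asserted matrix; (ii) compute all morphism spaces between these summands in $\stgrmodu\Lambda$ using \cref{lemma}; (iii) identify composition in $\stgrmodu\Lambda$ with matrix multiplication over the graded algebra $\Gamma=\oplus_{i\ge 0}\Ext^{ni}_{\gr\Lambda}(T,T\langle i\rangle)$; and (iv) deduce finite global dimension from \cref{utriangular} by induction on $a$.

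For (i)--(ii): relabel the summands as $\widetilde{T}^{(p)}\coloneqq\Omega^{-n(a-p)}T\langle a-p\rangle$ for $p\in\{1,\dots,a\}$, chosen (as always) without projective summands; this is merely a reindexing of $\{\Omega^{-ni}T\langle i\rangle\}_{i=0}^{a-1}$. Let $e_1,\dots,e_a\in B$ be the associated orthogonal idempotents, so that $e_pBe_q\simeq\Hom_{\stgrmodu\Lambda}(\widetilde{T}^{(q)},\widetilde{T}^{(p)})$. Applying the autoequivalences $\Omega^{n(a-q)}$ and $\langle q-a\rangle$ of $\stgrmodu\Lambda$ reduces this to $\Hom_{\stgrmodu\Lambda}(T,\Omega^{n(p-q)}T\langle q-p\rangle)$, and I would treat three cases. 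If $p=q$, then \cref{lemma}(5) gives $\Hom_{\stgrmodu\Lambda}(T,T)\simeq\Hom_{\gr\Lambda}(T,T)=\End_{\Lambda_0}(T)=\Gamma_0$, using that $T$ is concentrated in degree $0$ and $\Lambda$ positively graded. If $q>p$, set $l=q-p\ge 1$; then $\Hom_{\stgrmodu\Lambda}(T,\Omega^{-nl}T\langle l\rangle)\simeq\Hom_{\stgrmodu\Lambda}(\Omega^{nl}T,T\langle l\rangle)\simeq\Ext^{nl}_{\gr\Lambda}(T,T\langle l\rangle)=\Gamma_{q-p}$, using the standard isomorphism $\Ext^m_{\gr\Lambda}(X,Y)\simeq\Hom_{\stgrmodu\Lambda}(\Omega^m X,Y)$ for $m\ge 1$ over a self-injective algebra. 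If $q<p$, set $l=p-q$, so that $1\le l\le a-1$; since $\Hom_{\stgrmodu\Lambda}$ is a quotient of $\Hom_{\gr\Lambda}$, \cref{lemma}(7), applied with $M=T$, exponent $nl>0$ and shift $-l\ge 1-a$, yields $\Hom_{\stgrmodu\Lambda}(T,\Omega^{nl}T\langle -l\rangle)=0$. This gives $B$ the asserted upper triangular shape, with $\Gamma_0$ on the diagonal and $\Gamma_{q-p}$ in position $(p,q)$.

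For (iii): composition $e_pBe_r\times e_rBe_q\to e_pBe_q$ is composition of morphisms in $\stgrmodu\Lambda$, which under the identifications of (ii) corresponds, for $p\le r\le q$, to the Yoneda product $\Ext^{n(r-p)}_{\gr\Lambda}(T,T\langle r-p\rangle)\times\Ext^{n(q-r)}_{\gr\Lambda}(T,T\langle q-r\rangle)\to\Ext^{n(q-p)}_{\gr\Lambda}(T,T\langle q-p\rangle)$, i.e.\ to the multiplication of $\Gamma$ (with $\Gamma_0=\End_{\Lambda_0}(T)$ acting in the evident way), and to $0$ otherwise, since the relevant morphism space vanishes. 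Every index appearing satisfies $q-p\le a-1$, so only $\Gamma_0,\dots,\Gamma_{a-1}$ and products among them landing back in that range occur; hence $B$ is isomorphic as an algebra to the displayed matrix algebra. For (iv), I would induct on $a$: when $a=1$ we have $B\simeq\Gamma_0=\End_{\Lambda_0}(T)$, which has finite global dimension since $T$ is a tilting $\Lambda_0$-module and $\gldim\Lambda_0<\infty$; when $a>1$, the idempotent $e_1$ presents $B$ in the block form $\left(\begin{smallmatrix}\Gamma_0 & M\\ 0 & B'\end{smallmatrix}\right)$, where $B'$ is the $(a-1)\times(a-1)$ matrix algebra of the same type and $M$ is a $\Gamma_0$-$B'$-bimodule, so \cref{utriangular} together with the induction hypothesis gives $\gldim B<\infty$.

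I expect the main obstacle to be the bookkeeping in (ii) and (iii): getting the reindexing and the graded shifts consistent so that the three cases land in the correct matrix entries, checking the degree inequality $-l\ge 1-a$ that makes \cref{lemma}(7) applicable — this is precisely where the range $0\le i\le a-1$ in the definition of $\widetilde{T}$ is used — and verifying that composition in the stable category is compatible with the Yoneda product of $\Gamma$, in particular that no product $\Gamma_i\cdot\Gamma_j$ with $i+j\ge a$ is ever required. Once the morphism spaces and the multiplication are pinned down, the finiteness of global dimension is a routine induction via \cref{utriangular}.
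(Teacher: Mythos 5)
Your proof is correct and follows essentially the same route as the paper: reduce $e_pBe_q$ to $\Hom_{\stgrmodu\Lambda}(T,\Omega^{n(p-q)}T\langle q-p\rangle)$ by applying $\Omega^{\pm}$ and $\langle\pm\rangle$, split into the three cases, handle $p=q$ by \cref{lemma}(5), $q>p$ by the stable-$\Ext$ identification, $q<p$ by \cref{lemma}(7), and obtain finite global dimension from \cref{utriangular} with $\Gamma_0\simeq\End_{\Lambda_0}(T)$ derived equivalent to $\Lambda_0$. Your version is slightly more explicit about the Yoneda-product compatibility and about the induction on $a$, which the paper leaves implicit.
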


\begin{proof}
For $0 \leq i,j \leq a-1$, we consider 
\[
\Hom_{\stgrmodu \Lambda}(\Omega^{-nj}T\langle j \rangle,\Omega^{-ni}T\langle i \rangle) \simeq \Hom_{\stgrmodu \Lambda}(T,\Omega^{-n(i-j)}T\langle i-j \rangle).
\]
In the case $i < j$, we note that $\lvert i - j \rvert \leq a-1$ and so \cref{lemma} \textit{(8)} applies. 
Consequently, 
\[
\Hom_{\stgrmodu \Lambda}(T,\Omega^{-n(i-j)}T\langle i-j \rangle) 
\simeq 
\Hom_{\gr \Lambda}(T,\Omega^{-n(i-j)}T\langle i-j \rangle) 
= 0.
\]

If $i=j$, one obtains $\End_{\stgrmodu \Lambda}(T)$, which is isomorphic to \mbox{$\End_{\gr \Lambda}(T) = \Gamma_0$} by \cref{lemma} \textit{(3)}. For $i > j$, we get
\[
\Hom_{\stgrmodu \Lambda}(T,\Omega^{-n(i-j)}T\langle i-j \rangle) \simeq \Ext_{\gr \Lambda}^{n(i-j)}(T,T \langle i-j \rangle) = \Gamma_{i-j}.
\]
Computing our matrix with respect to the decomposition 
\[
\widetilde{T}= \Omega^{-n(a-1)}T \langle a-1 \rangle \oplus \cdots \oplus \Omega^{-n} T \langle 1 \rangle \oplus T,
\]
this yields our desired description. 

To see that $B$ is of finite global dimension, notice that \mbox{$\Gamma_0 \simeq \End_{\Lambda_0}(T)$}. As $\End_{\Lambda_0}(T)$ is derived equivalent to $\Lambda_0$, which is of finite global dimension, \cref{utriangular} applies and the claim follows. \end{proof}

Note that we could also have deduced that $B$ is of finite global dimension from \cite{Yamaura 2013}*{Corollary 3.12}. 

Our next lemma provides an important step in the proof of \cref{Characterization}. Recall that given a graded $\Lambda$-module $M = \oplus_{i\in\Z}M_i$, each graded part $M_i$ is also a module over $\Lambda_0$.  On the other hand, every $\Lambda_0$-module is trivially a graded $\Lambda$-module concentrated in degree $0$. In the proof of \cref{genlem}, we repeatedly vary between thinking of graded $\Lambda$-modules concentrated in one degree and modules over the degree $0$ part. 

We use the notation $M_{\geq i}$ for the submodule of $M$ with
\[
(M_{\geq i})_j = \begin{cases}
      M_j & j \geq i \\
      0 & j < i,
   \end{cases}
\]
while the quotient module $\faktor{M}{M_{\geq i+1}}$ is denoted by $M_{\leq i}$. Note that $M_i$ is isomorphic to $\faktor{M_{\geq i}}{M_{\geq i+1}}$.

\begin{lem}\label{genlem} 
Assume \cref{setup}. The module $\widetilde{T}$ generates $\stgrmodu \Lambda$ as a thick subcategory, i.e.\ we have \mbox{$\Thick_{\stgrmodu \Lambda}(\widetilde{T}) = \stgrmodu \Lambda$}.
\end{lem}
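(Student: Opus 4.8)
The plan is to show that $\Thick_{\stgrmodu \Lambda}(\widetilde{T})$ contains a set of objects that is already known to generate $\stgrmodu \Lambda$, namely the graded shifts of the simple modules, or equivalently (since $T$ is a tilting $\Lambda_0$-module and $\gldim \Lambda_0 < \infty$) the graded shifts $S\langle j\rangle$ of modules $S$ concentrated in degree $0$. Recall from \cref{lemma} \textit{(5)} that in $\stgrmodu \Lambda$ the Hom-spaces between modules concentrated in degree $0$ agree with those in $\gr \Lambda$, so the tilting property of $T$ over $\Lambda_0$ transfers: $\Thick_{\stgrmodu \Lambda}$ of the summands of $T$ (placed in degree $0$) contains every $\Lambda_0$-module placed in degree $0$. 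The first reduction is therefore: it suffices to prove that $M \in \Thick_{\stgrmodu \Lambda}(\widetilde{T})$ for every finitely generated graded $\Lambda$-module $M$, and for this it suffices to handle modules concentrated in a single degree, i.e.\ modules of the form $S\langle j \rangle$ with $S$ a $\Lambda_0$-module.

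The second step is to unwind the definition of $\widetilde{T}$. Since $\widetilde{T} = \oplus_{i=0}^{a-1}\Omega^{-ni}T\langle i\rangle$, the thick subcategory it generates is closed under the triangulated shift $\Omega^{-1}$ and contains $T\langle i\rangle$ for $0 \le i \le a-1$; applying $\Omega^{n}$ (the appropriate power of the shift of $\stgrmodu \Lambda$) to the summand $\Omega^{-n(a-1)}T\langle a-1\rangle$ and comparing with the socle-in-top-degree structure will let me climb to $T\langle a \rangle$, and then inductively to $T\langle j\rangle$ for all $j \in \Z$; the key bookkeeping here is \cref{lemma} \textit{(4)}, which controls how the highest and lowest degrees of $\Omega^{i}$ move, so that $\Omega^{na}T\langle a\rangle$ differs from $T$ only by a graded shift that we can absorb. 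Combining this with the first step, $\Thick_{\stgrmodu \Lambda}(\widetilde{T})$ contains $S\langle j\rangle$ for every $\Lambda_0$-module $S$ and every $j \in \Z$.

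Finally I would run a filtration (dévissage) argument on an arbitrary finitely generated graded $M$: using the submodules $M_{\ge i}$ and the short exact sequences $0 \to M_{\ge i+1} \to M_{\ge i} \to M_i \to 0$ in $\gr \Lambda$, each of which becomes a triangle in $\stgrmodu \Lambda$, and noting that $M$ has only finitely many nonzero graded pieces since $\Lambda$ is finite dimensional, induction on the number of nonzero graded components shows $M \in \Thick_{\stgrmodu \Lambda}(\widetilde{T})$ once each $M_i\langle\text{shift}\rangle$ is, which was established above. Since $\stgrmodu \Lambda$ is generated as a thick subcategory by the finitely generated graded modules, this gives $\Thick_{\stgrmodu \Lambda}(\widetilde{T}) = \stgrmodu \Lambda$. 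The main obstacle I anticipate is the degree-climbing step: making sure that the power of $\Omega$ applied to a summand of $\widetilde{T}$ really returns $T$ up to graded shift, with no spurious projective summands and no mismatch in which degree things are concentrated — this is exactly where the graded Frobenius hypothesis ($\Soc \Lambda \subseteq \Lambda_a$, highest degree $a$) and parts \textit{(4)}--\textit{(7)} of \cref{lemma} do the real work, and getting the indices right will be the delicate part.
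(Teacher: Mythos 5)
Your first and third steps are sound and essentially match the paper: reduce to modules concentrated in a single degree, note that $T\langle i\rangle$ being a tilting $\Lambda_0$-module with $\gldim\Lambda_0<\infty$ puts all such modules in degree $i$ into $\Thick_{\stgrmodu\Lambda}(T\langle i\rangle)$, and then run a d\'evissage on the graded pieces of an arbitrary $M$. The problem is the second step, the degree-climbing, which contains a genuine gap. You assert that ``$\Omega^{na}T\langle a\rangle$ differs from $T$ only by a graded shift,'' citing \cref{lemma}~\textit{(4)} as bookkeeping. But \cref{lemma}~\textit{(4)} only bounds the highest/lowest degrees of syzygies; it provides no isomorphism, and there is no reason in general for $T$ to be $\Omega$-periodic up to graded shift. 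Even the explicit calculation you gesture at does not work: $\Omega^{n}\bigl(\Omega^{-n(a-1)}T\langle a-1\rangle\bigr) = \Omega^{-n(a-2)}T\langle a-1\rangle$, which is just another summand of $\widetilde{T}$, not $T\langle a\rangle$.

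The paper climbs from degrees $\{0,\dots,a-1\}$ to degree $a$ by a completely different mechanism that your proposal misses. One filters the regular module $\Lambda$ by its degree pieces: since $\Lambda$ is projective, $\Lambda\simeq 0$ in $\stgrmodu\Lambda$, and the short exact sequences $0\to\Lambda_{\ge j+1}\to\Lambda_{\ge j}\to\Lambda_j\to 0$ (for $j=0,\dots,a-1$, whose quotients $\Lambda_j$ are already in the thick subcategory) then force $\Lambda_{\ge a}=\Lambda_a$ into $\Thick_{\stgrmodu\Lambda}(\widetilde{T})$. Next one uses the graded Frobenius isomorphism $D\Lambda\simeq\Lambda\langle -a\rangle$ to identify $\Lambda_a\langle -a\rangle\simeq D\Lambda_0$ as $\Lambda_0$-modules; since $\gldim\Lambda_0<\infty$, one has $\Lambda_0\in\Thick_{\D^b(\Lambda_0)}(D\Lambda_0)$, and pushing this through the quotient functor $\D^b(\gr\Lambda)\to\stgrmodu\Lambda$ shows $\Lambda_0\langle a\rangle\in\Thick_{\stgrmodu\Lambda}(\Lambda_a)\subseteq\Thick_{\stgrmodu\Lambda}(\widetilde{T})$. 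Shifting and iterating handles all positive $i$, and the dual filtration by quotients $\Lambda_{\le j}$ handles negative $i$. So the essential inputs are the projectivity of $\Lambda$ in the stable category, the graded Frobenius property $\Soc\Lambda\subseteq\Lambda_a$, and $\gldim\Lambda_0<\infty$ --- not any $\Omega$-periodicity of $T$, which you would need but do not have.
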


\begin{proof}
We divide the proof into two steps. In the first part, we show that the set of objects $\{\Lambda_0\langle i \rangle \}_{i \in \mathbb{Z}}$ generates $\stgrmodu \Lambda$ as a thick subcategory. In the second part, we prove that this set is contained in $\Thick_{\stgrmodu \Lambda}(\widetilde{T})$, which yields our desired conclusion.

\proofpart{}
Notice first that every graded $\Lambda$-module which is concentrated in degree $i$ is necessarily contained in the thick subcategory generated by $\Lambda_0 \langle i \rangle$. To see this, apply $\langle i \rangle$ to a finite $\Lambda_0$-projective resolution of the module, split up into short exact sequences and use that thick subcategories have the $2/3$-property on distinguished triangles.

Let $M$ be an object in $\stgrmodu \Lambda$. Denote the highest and lowest degree of $M$ by $h$ and $l$, respectively. Observe that $M_{\geq h} = M_h$. By the argument above, we know that $M_j$ is in \mbox{$\Thick_{\stgrmodu \Lambda}(\{\Lambda_0\langle i \rangle \}_{i \in \mathbb{Z}})$} for every $j$. Considering the short exact sequences
\begin{equation} \label{ses}
    \begin{tikzcd} 
0 \rar &  M_{\geq j+1} \rar & M_{\geq j} \rar & M_{j} \rar & 0
\end{tikzcd}
\end{equation}
for $j = l, \ldots, h-1$, we can hence conclude that also $M_{\geq l} = M$ is in our subcategory. This proves that \mbox{$\Thick_{\stgrmodu \Lambda}(\{\Lambda_0\langle i \rangle \}_{i \in \mathbb{Z}}) = \stgrmodu \Lambda$}.

\proofpart{}
As thick subcategories are closed under direct summands and translation, we immediately observe that $T\langle i \rangle$ is in $\Thick_{\stgrmodu \Lambda}(\widetilde{T})$ for $i = 0,\dots,a-1$. Since $T$ is a tilting module over $\Lambda_0$, and $\Lambda_0 \langle i \rangle$ thus has a finite coresolution in $\add T\langle i \rangle$, this implies that $\Lambda_0\langle i \rangle$ is in $\Thick_{\stgrmodu \Lambda}(\widetilde{T})$ for $i = 0,\dots,a-1$. Note that by our argument in \textit{Part 1}, we hence know that every module which is concentrated in degree $i$ for some $i=0,\dots,a-1$, is contained in our subcategory.

Consider the short exact sequences (\ref{ses}) for $M=\Lambda$, and recall that the module $\Lambda_{\geq 0} = \Lambda$ is projective and hence zero in $\stgrmodu \Lambda$. By a similar argument as before, this yields that $\Lambda_a$ is contained in $\Thick_{\stgrmodu \Lambda}(\widetilde{T})$. We next explain why this entails that also $\Lambda_0 \langle a \rangle$ is in our subcategory. 

Since $\Lambda$ is graded Frobenius, we have $\Lambda \langle -a \rangle \simeq D\Lambda$ as graded right $\Lambda$-modules, and thus $D\Lambda_0 \simeq \Lambda_a$ as $\Lambda_0$-modules. As $\Lambda_0$ has finite global dimension, this implies that $\Lambda_0$ is contained in $\Thick_{\D^b(\Lambda_0)}  (\Lambda_a \langle -a \rangle)$. Composing the equivalence \mbox{$\D^b(\gr \Lambda) / \D^{\perf}(\gr \Lambda) \simeq \stgrmodu \Lambda$} from \cite{Buchweitz}*{Theorem 4.4.1} with the associated quotient functor, one obtains a triangulated functor \mbox{$Q \colon \D^b(\gr \Lambda) \rightarrow \stgrmodu \Lambda$}. From the chain of subcategories
\[
\Thick_{\D^b(\Lambda_0)} \Lambda_a\langle -a\rangle \subseteq \Thick_{\D^b(\gr \Lambda)} \Lambda_a\langle -a\rangle 
\subseteq Q^{-1}(\Thick_{\stgrmodu \Lambda} \Lambda_a \langle -a\rangle),
\] 
we see that $\Lambda_0\langle a \rangle$ is in $\Thick_{\stgrmodu \Lambda} (\Lambda_a)$, which again is contained in $\Thick_{\stgrmodu \Lambda}(\widetilde{T})$. 

Shifting the short exact sequences involved by positive integers and using the same argument as above, one obtains that $\Lambda_0 \langle i \rangle$ is in $\Thick_{\stgrmodu \Lambda}(\widetilde{T})$ for all $i \geq 0$. That $\Lambda_0 \langle i \rangle$ is in $\Thick_{\stgrmodu \Lambda}(\widetilde{T})$ for all $i < 0$ is shown similarly using the short exact sequences
\[
\begin{tikzcd}
0 \rar &  \Lambda_j \rar & \Lambda_{\leq j} \rar & \Lambda_{\leq j-1} \rar & 0
\end{tikzcd}
\]
for $j = 1, \ldots, a$. We can hence conclude that $\Lambda_0\langle i \rangle$  is in $\Thick_{\stgrmodu \Lambda}(\widetilde{T})$ for every integer $i$, which finishes our proof.
\end{proof}

We are now ready to prove the main result of this section.

\begin{proof}[Proof of \cref{Characterization}]
We begin by proving \textit{(1)} implies \textit{(2)}. To see that $\widetilde{T}$ is a tilting object, notice first that it generates $\stgrmodu \Lambda$ by \cref{genlem}. Thus, we need only check rigidity, i.e.\ that \mbox{$\Hom_{\stgrmodu \Lambda}(\widetilde{T},\Omega^{-l}\widetilde{T}) = 0$} whenever $l \neq 0$. Splitting up on summands of \mbox{$\widetilde{T}=\oplus_{i=0}^{a-1} \Omega^{-ni}T\langle i \rangle$} and reindexing appropriately, we see that it is enough to show
\begin{equation} \label{ets}
    \Hom_{\stgrmodu \Lambda}(T,\Omega^{-(nk+l)}T\langle k \rangle)=0 ~~\text{ for }~~ l \neq 0
\end{equation}
for any integer $k$ with $\lvert k \rvert \leq a-1$. For convenience, we instead argue that (\ref{ets}) holds for any $k \geq 1-a$, as this more general statement will be needed later in the proof.

Assume $nk+l=0$. Now $l \neq 0$ implies $k \neq 0$, so the condition above is satisfied as our morphisms are homogeneous of degree $0$. 

Let $nk+l > 0$. Now,
\[
\Hom_{\stgrmodu \Lambda}(T,\Omega^{-(nk+l)}T\langle k \rangle) \simeq \Ext_{\gr \Lambda}^{nk+l}(T,T\langle k \rangle), 
\]
which is zero for $l\neq 0$ as $\Lambda$ is $n$-$T$-Koszul.
 
It remains to verify (\ref{ets}) in the case where $nk+l < 0$. As $k \geq 1-a$, part \textit{(8)} of \cref{lemma} applies. We hence see that (\ref{ets}) is satisfied also in this case, which means that $\widetilde{T}$ is a tilting object in $\stgrmodu \Lambda$. 

Recall from \cref{gldim lemma} that $B$ has finite global dimension. To see that $B$ is $(na-1)$-representation infinite, we use that $\widetilde{T}$ is a tilting object in $\stgrmodu \Lambda$. Hence, the equivalence and correspondence of Serre functors described in \cref{tilting section} yields
 \begin{align} 
     \Hom_{\stgrmodu \Lambda}(\widetilde{T}, \Omega^{-(nai+l)}\widetilde{T}\langle ai \rangle) 
     & \simeq \Hom_{\D^b(B)}(B, \nu^{-i}(B)[nai - i+l]) \label{tilting isomorphism}\\
     & \simeq \Hom_{\D^b(B)}(B, \nu_{na-1}^{-i}(B)[l]) \nonumber \\
     & \simeq \Hm^l(\nu_{na-1}^{-i}(B)), \nonumber
\end{align}
where we have implicitly used that $T_{\mu} \simeq T$ and that the functors $\Omega^{\pm 1}(-)$, $\langle \pm 1 \rangle$ and $(-)_{\mu}$ commute.
 
Splitting up on summands of $\widetilde{T}$ and reindexing appropriately, we notice that \mbox{$\Hom_{\stgrmodu \Lambda}(\widetilde{T}, \Omega^{-(nai+l)}\widetilde{T}\langle ai \rangle) = 0$} for $l \neq 0$ and $i > 0$ if and only if (\ref{ets}) is satisfied for $k>0$. The latter holds as $a \geq 1$ and since we have already shown that (\ref{ets}) is satisfied for $k \geq 1-a$. We can thus conclude that \mbox{$\Hm^l(\nu_{na-1}^{-i}(B)) = 0$} for \mbox{$i>0$} and \mbox{$l \neq 0$}. Note that when $i = 0$ and $l \neq 0$, we have \mbox{$\Hm^l(\nu_{na-1}^{-i}(B)) = \Hm^l(B) = 0$}. Consequently, since 
\[
\Hm^l(\nu_{na-1}^{-1}(B)) \simeq \Ext^{na - 1 + l}_{B}(DB,B),
\] 
our algebra $B$ is $(na-1)$-representation infinite by \cref{nrepinflem}.

To show that \textit{(2)} implies \textit{(1)}, we verify that given any integer $k$, one obtains \mbox{$\Ext_{\gr \Lambda}^{nk+l}(T,T\langle k \rangle) = 0$} for $l \neq 0$. If $nk+l \leq 0$, this is immediately satisfied, so assume $nk+l > 0$. As before, we now have
\[
\Ext_{\gr \Lambda}^{nk+l}(T,T\langle k \rangle) \simeq \Hom_{\stgrmodu \Lambda}(T,\Omega^{-(nk+l)}T\langle k \rangle).
\]
If $k < 0$, this is zero by \cref{lemma} \textit{(7)}, so it remains to check the case where $k$ is non-negative. 

Observe that the isomorphism
\begin{align*}
     \Hom_{\stgrmodu \Lambda}(\widetilde{T}, \Omega^{-(nai+l)}\widetilde{T}\langle ai \rangle) & \simeq \Hm^l(\nu_{na-1}^{-i}(B))
\end{align*}
from (\ref{tilting isomorphism}) still holds, as $\widetilde{T}$ is assumed to be a tilting object in $\stgrmodu \Lambda$. As $B$ is $(na-1)$-representation infinite, we know that \mbox{$\Hm^l(\nu_{na-1}^{-i}(B)) = 0$} for $i \geq 0$ and $l \neq 0$. The isomorphism above hence yields that (\ref{ets}) is satisfied for $k \geq 0$.

This allows us to conclude that $T$ is graded $n\mathbb{Z}$-orthogonal. As $T$ is a tilting module over $\Lambda_0$ by our standing assumptions, we have hence shown that $\Lambda$ is $n$-$T$-Koszul.
\end{proof}

To illustrate our characterization result, we consider some examples. As can be seen below, we use diagrams to represent indecomposable modules. The reader should note that in general one cannot expect modules to be represented uniquely by such diagrams, but in the cases we look at, they determine indecomposable modules up to isomorphism.

\begin{example}\label{ex: section 6}
Let $A$ denote the path algebra of the quiver 
\begin{center}
\begin{tikzcd}
& 2 \arrow[dr, "\alpha_3"] & \\
1 \arrow[ur,"\alpha_1"] \arrow[dr,swap, "\alpha_2"] & & 4\\
& 3 \arrow[ur,swap, "\alpha_4"] &
\end{tikzcd}
\end{center}
modulo the ideal generated by paths of length two. The trivial extension $\Delta A$ is given by the quiver
\begin{center}
\begin{tikzcd}
& 2 \arrow[dr, bend left, "\alpha_3"] \arrow[dl, swap, bend left, "\alpha_1'"] & \\
1 \arrow[ur, bend left, "\alpha_1"] \arrow[dr,swap, bend left, "\alpha_2"] & & 4 \arrow[ul, swap, bend left, "\alpha_3'"] \arrow[dl, bend left, "\alpha_4'"]\\
& 3 \arrow[ur,swap, bend left, "\alpha_4"] \arrow[ul, bend left, "\alpha_2'"] & 
\end{tikzcd}
\end{center}
with the trivial extension relations, i.e.\ all length two zero relations with the exception of $\alpha_i \alpha_i'$ and $\alpha_i' \alpha_i$. Instead, these latter paths satisfy all length two commutativity relations, i.e.\ \mbox{$\alpha_1 \alpha_1' - \alpha_2 \alpha_2'$}, \mbox{$\alpha_3 \alpha_3' - \alpha_1' \alpha_1$}, \mbox{$\alpha_4' \alpha_4 - \alpha_3' \alpha_3$}, and \mbox{$\alpha_2' \alpha_2 - \alpha_4 \alpha_4'$}. Moreover, we let $\Delta A$ be graded with the trivial extension grading. 

The indecomposable projective injectives for $\Delta A$ can be given as the diagrams 
\begin{center}
\begin{tikzpicture}
\node[scale=.75] (a) at (0,0){$
\arraycolsep=.8pt\def\arraystretch{0.8}
\begin{array}{ccc}
     &1_0 & \\
  3_0   & &2_0 \\
     & 1_1 &
\end{array}
$};
\node[scale=.75] (a) at (2,0){
$
\arraycolsep=.8pt\def\arraystretch{0.8}
\begin{array}{ccc}
     &2_0 & \\
  1_1   & &4_0 \\
     & 2_1 &
\end{array}
$
};
\node[scale=.75] (a) at (4,0){
$
\arraycolsep=.8pt\def\arraystretch{0.8}
\begin{array}{ccc}
     &3_0 & \\
  1_1   & &4_0 \\
     & 3_1 &
\end{array}
$

};
\node[scale=.75] (a) at (6,0){
$
\arraycolsep=.8pt\def\arraystretch{0.8}
\begin{array}{ccc}
     &4_0 & \\
  2_1   & &3_1, \\
     & 4_1 &
\end{array}
$
};
\end{tikzpicture}
\end{center}
where the (non-subscript) numbers represent elements of a basis for the module, each of which is annihilated by all the idempotents except for $e_i$ with $i$ equal to the number. The subscript numbers represent the degree of the basis element.

Let $T$ be the tilting $A$-module given by the direct sum of the following modules
\begin{center}
\begin{tikzpicture}
\node[scale=.75] (a) at (0,0){
$
\arraycolsep=.8pt\def\arraystretch{0.8}
\begin{array}{ccc}
& 1_0   &\\
3_0 & & 2_0\\
\end{array}
$
};
\node[scale=.75] (b) at (2,0){
$
\arraycolsep=.8pt\def\arraystretch{0.8}
\begin{array}{c}
2_0   \\
\end{array}
$
};
\node[scale=.75] (c) at (4,0){
$
\arraycolsep=.8pt\def\arraystretch{0.8}
\begin{array}{c}
3_0 \\
\end{array}
$
};
\node[scale=.75] (d) at (6,0){
$
\arraycolsep=.8pt\def\arraystretch{0.8}
\begin{array}{ccc}
2_0 &   & 3_0.\\
 & 4_0 &\\
\end{array}
$
};
\end{tikzpicture}
\end{center}
The initial two terms of the minimal injective $\Delta A$-resolution of the first summand of $T$ as well as the first two cosyzygies can be given as
\begin{center}
\begin{tikzpicture}
\node[scale=.75] (b) at (3,1.5){
$
\arraycolsep=.8pt\def\arraystretch{0.8}
\begin{array}{ccc}
& 3_{-1}   &\\
4_{-1} & & 1_0\\
& 3_0 &
\end{array}
\oplus
\arraycolsep=.8pt\def\arraystretch{0.8}
\begin{array}{ccc}
& 2_{-1}   &\\
1_0 & & 4_{-1}\\
& 2_0 &
\end{array}
$
};
\node[scale=.75] (c) at (6,0){
$
\arraycolsep=.8pt\def\arraystretch{0.8}
\begin{array}{ccccc}
& 3_{-1} & & 2_{-1} &\\
4_{-1} & & 1_0 & & 4_{-1}
\end{array}
$
};
\node[scale=.75] (d) at (9.725,1.5){
$
\arraycolsep=.8pt\def\arraystretch{0.8}
\begin{array}{ccc}
& 4_{-2}   &\\
2_{-1} & & 3_{-1}\\
& 4_{-1} &
\end{array}
\oplus
\arraycolsep=.8pt\def\arraystretch{0.8}
\begin{array}{ccc}
& 1_{-1}   &\\
3_{-1} & & 2_{-1}\\
& 1_{0} &
\end{array}
\oplus
\arraycolsep=.8pt\def\arraystretch{0.8}
\begin{array}{ccc}
& 4_{-2}   &\\
2_{-1} & & 3_{-1}\\
& 4_{-1} &
\end{array}
$
};
\node[scale=.75] (e) at (13,0){
$
\arraycolsep=.8pt\def\arraystretch{0.8}
\begin{array}{ccccccc}
& 4_{-2}   & & 1_{-1}   & & 4_{-2}   &\\
2_{-1} & & 3_{-1} & & 2_{-1} & & 3_{-1}.\\
\end{array}
$
};
\draw [->] (b) edge (c) (c) edge (d) (d) edge (e);
\end{tikzpicture}
\end{center}

Looking at this part of the resolution, it is not so obvious that $T$ is graded $2\mathbb{Z}$-orthogonal as a $\Delta A$ module, whereas by using the equivalence \mbox{$\D^b(\modu A) \simeq \stgrmodu \Delta A$} or by degree arguments as we have done before, it is immediate that \mbox{$\widetilde{T} \simeq T$} is a tilting object in $\stgrmodu \Delta A$. It is also easy to check that $\End_{\stgrmodu \Delta A}(T)$ is isomorphic to the hereditary algebra given by the path algebra of the quiver of $A$, which is representation infinite. Using \cref{Characterization}, we can hence conclude that the algebra $\Delta A$ is $2$-$T$-Koszul.

Note that this example also illustrates that, as has been remarked on in the literature before, one cannot always expect nice minimal resolutions of $T$ for (generalized) $T$-Koszul algebras. 
\end{example}

As a consequence of \cref{Characterization}, our next corollary shows that an algebra is $n$-representation infinite if and only if its trivial extension is $(n+1)$-Koszul with respect to its degree $0$ part. This result is inspired by connections between \mbox{$n$-representation} infinite algebras and graded bimodule $(n+1)$-Calabi--Yau algebras of Gorenstein parameter $1$, as studied in \cites{Herschend-Iyama-Oppermann,Keller 2011,Minamoto & Mori 2011,Amiot-Iyama-Reiten}. In some sense, the corollary below is a $T$-Koszul dual version of \cite{Herschend-Iyama-Oppermann}*{Theorem 4.36}.

\begin{cor} \label{motivating result}
Let $\Lambda = \Lambda_0 \oplus \Lambda_1$ be a finite dimensional graded Frobenius algebra of highest degree $1$ with $\gldim \Lambda_0 < \infty$.
Then $\Lambda$ is $(n+1)$-Koszul with respect to $T=\Lambda_0$ if and only if $\Lambda_0$ is $n$-representation infinite. In particular, we obtain a bijective correspondence 
\[
\left\{
\begin{tabular}{@{}l@{}}
    isomorphism classes \\
    of basic $n$-representa- \\ 
    tion infinite algebras
\end{tabular}
\right\}
{\rightleftarrows}
\left\{
\begin{tabular}{@{}l@{}}
    isomorphism classes of graded symmetric finite \\
    dimensional algebras of highest degree $1$ which are \\
    $(n+1)$-Koszul with respect to their degree $0$ part
\end{tabular}
\right\},
\]
where the maps are given by $ A \longmapsto \Delta A$ and $\Lambda_0 \longmapsfrom \Lambda$.
\end{cor}

\begin{proof}
Aside from the assumption that $(\Lambda_0)_\mu \simeq \Lambda_0$ as $\Lambda$-modules for the Nakayama automorphism $\mu$, all the other parts of \cref{setup} are trivially satisfied. 
Now, by \cref{graded frobenius lemma} \textit{(2)}, the Nakayama automorphism of $\Lambda$ is graded. It thus follows from \cref{prop: twist of algebra iso as one-sided modules to algebra} that $(\Lambda_0)_\mu \simeq \Lambda_0$ as $\Lambda$-modules.

Notice that $\End_{\stgrmodu \Lambda}(\Lambda_0) \simeq \End_{\gr \Lambda}(\Lambda_0) \simeq \Lambda_0$ by \cref{lemma} \textit{(3)}. Observe that \mbox{$\Hom_{\stgrmodu \Lambda}(\Lambda_0, \Omega^{-i}\Lambda_0) \simeq \Hom_{\stgrmodu \Lambda}(\Omega^{i}\Lambda_0, \Lambda_0) = 0$} for all $i \neq 0$. This follows by degree considerations similar to those used in the proof of \cref{lemma} and using the fact that syzygies of $\Lambda_0$ are generated in degrees greater or equal to $1$. Combining this with \cref{genlem}, one obtains that $\Lambda_0$ is a tilting object in $\stgrmodu\Lambda$, and consequently our first statement follows from \cref{Characterization}. 

We get the bijection as a special case of this, as $\Delta A$ is a graded symmetric finite dimensional algebra of highest degree $1$ and $\Lambda \simeq \Delta \Lambda_0$ as graded algebras in the case where $\Lambda$ is symmetric.
\end{proof}

Since the direction \textit{(2)} implies \textit{(1)} of \cref{Characterization} was most useful in \cref{ex: section 6}, we also include an example showing how the other direction can be utilised.

\begin{example}\label{ex: tensor product of Kroneckers}
Let $A$ denote the Kronecker algebra. The trivial extension $\Delta A$ is given by the quiver
\[
\begin{tikzcd}
1 
\rar[shift left = 0.5ex] \rar[shift right = 0.5ex] 
& 
2 
\lar[blue, shift left = 1.5ex] \lar[blue, shift right = 1.5ex]\\ 
\end{tikzcd}
\]
with the trivial extension relations, see e.g.\ \cite{Schroer}, \cite{Fernandez-Platzeck'02} or \cite{Fernandez-et-al'22}. 
The tensor product algebra \mbox{$\Lambda \coloneqq \Delta A \otimes_k \Delta A$} then has quiver given by
\[
\begin{tikzcd}
& (1,1) \dlar[shift left = 0.5ex] \dlar[shift right = 0.5ex] \drar[shift left = 0.5ex] \drar[shift right = 0.5ex]  & \\
(2,1) \drar[shift left = 0.5ex] \drar[shift right = 0.5ex] 
\urar[blue, shift left = 1.5ex] \urar[blue, shift right = 1.5ex]
& & 
(1,2) \dlar[shift left = 0.5ex] \dlar[shift right = 0.5ex]
\ular[blue, shift left = 1.5ex] \ular[blue, shift right = 1.5ex]
\\
& 
(2,2) 
\ular[blue, shift left = 1.5ex] \ular[blue, shift right = 1.5ex]
\urar[blue, shift left = 1.5ex] \urar[blue, shift right = 1.5ex]
&
\end{tikzcd}
\]
with the tensor product relations. We note that the relations of $\Lambda$ will not be explicitly used in the computations in this example, as we will instead employ general facts about tensor product algebras. We endow $\Delta A$ and $\Lambda$ with the gradings induced by putting the black arrows in degree $0$ and the blue arrows in \mbox{degree $1$}.

Since $\Delta A$ is graded symmetric of highest degree $1$, the algebra $\Lambda$ is graded symmetric of highest degree $a=2$. Putting \mbox{$T = \Lambda_0 = A \otimes_k A$}, we observe that the standing assumptions described in \cref{setup} are satisfied. 

As $A$ is $1$-representation infinite, it follows from \cref{motivating result} that $\Delta A$ is \mbox{$2$-Koszul} with respect to $A$. This implies that $\Lambda$ is $2$-Koszul with respect to $T$ by \cref{prop: n-T-Koszul closed under tensor products}. The direction \textit{(1)} implies \textit{(2)} of \cref{Characterization} now allows us to conclude that \mbox{$B = \End_{\stgrmodu \Lambda} (\widetilde{T})$} is $3$-representation infinite. 

We finish this example by giving the quiver of $B$. As \mbox{$\widetilde{T} = T \oplus \Omega^{-2} T \langle 1 \rangle$}, we have 
\[
B = \End_{\stgrmodu \Lambda} (\widetilde{T}) \simeq \End_{\stgrmodu \Lambda} (T)
\oplus 
\Hom_{\stgrmodu \Lambda} (T, \Omega^{-2} T \langle 1 \rangle)
\oplus
\End_{\stgrmodu \Lambda} (\Omega^{-2} T \langle 1 \rangle).
\]
Note that we have here used that \mbox{$\Hom_{\stgrmodu \Lambda} (\Omega^{-2} T \langle 1 \rangle,T) = 0$} by \mbox{\cref{lemma} \textit{(8)}}. The opposite of the quiver of $B$ can thus be given by
\[
\begin{tikzcd}
& 
(1,1)_0 
\dlar[shift left = 0.5ex] \dlar[shift right = 0.5ex] \drar[shift left = 0.5ex] \drar[shift right = 0.5ex]  
& \\
(2,1)_0 
\rar[blue, shift left = 0.5ex] \rar[blue, shift right = 0.5ex]
& 
(1,1)_1 
\dlar[shift left = 0.5ex] \dlar[shift right = 0.5ex] \drar[shift left = 0.5ex] \drar[shift right = 0.5ex] 
& 
(1,2)_0 
\dlar[crossing over, shift left = 0.5ex] \dlar[crossing over, shift right = 0.5ex]
\lar[blue, shift left = 0.5ex] \lar[blue, shift right = 0.5ex]
\\
(2,1)_1 
\drar[shift left = 0.5ex] \drar[shift right = 0.5ex]
& 
(2,2)_0
\lar[blue, shift left = 0.5ex] \lar[blue, shift right = 0.5ex]
\rar[blue, shift left = 0.5ex] \rar[blue, shift right = 0.5ex]
\arrow[from=ul, crossing over, shift left = 0.5ex] 
\arrow[from=ul, crossing over, shift right = 0.5ex]
&
(1,2)_1 
\dlar[shift left = 0.5ex] \dlar[shift right = 0.5ex]
\\
& 
(2,2)_1
&
\end{tikzcd},
\]
where the vertices with subscript $0$ and the black arrows between them correspond to \mbox{$\End_{\stgrmodu \Lambda} (T)$}, the vertices with subscript $1$ and the black arrows between them correspond to \mbox{$\End_{\stgrmodu \Lambda} (\Omega^{-2} T \langle 1 \rangle)$}, and the blue arrows correspond to \mbox{$\Hom_{\stgrmodu \Lambda} (T, \Omega^{-2} T \langle 1 \rangle)$}.
\end{example}

Our aim for the rest of this section is to use the theory we have developed to provide an affirmative answer to our motivating question from the introduction. As in the case of the generalized AS-regular algebras studied by Minamoto and Mori in \cite{Minamoto & Mori 2011}, the notion of quasi-Veronese algebras is relevant.

\begin{defin}
Let $\Gamma = \oplus_{i \in \mathbb{Z}}\Gamma_i$ be a $\mathbb{Z}$-graded algebra and $r$ a positive integer. The \textit{$r$-th quasi-Veronese algebra} of $\Gamma$ is a $\mathbb{Z}$-graded algebra defined by 
\[
\Gamma^{[r]} = \bigoplus_{i \in \mathbb{Z}}\begin{pmatrix}
\Gamma_{ri} & \Gamma_{ri + 1} & \cdots & \Gamma_{ri + r - 1} \\
\Gamma_{ri - 1} & \Gamma_{ri} & \cdots & \Gamma_{ri + r - 2} \\
\vdots  & \vdots  & \ddots & \vdots  \\
\Gamma_{ri - r + 1} & \Gamma_{ri - r + 2} & \cdots & \Gamma_{ri} 
\end{pmatrix}.
\]
\end{defin}

\begin{remark}
    We have chosen to use the terminology \textit{$r$-th quasi-Veronese algebra} to be consistent with \cite{Minamoto & Mori 2011}. However, it should be noted that the $r$-th quasi-Veronese algebra as defined above is indeed a \textit{$\mathbb{Z}/r\mathbb{Z}$-covering} in the sense of \cite{Bongartz-Gabriel}, which arises as a special type of \textit{smash product}, see e.g.\ \cite{CM}.
\end{remark}

In \cref{preprojective and Veronese} we show that if $\Lambda$ is $n$-$T$-Koszul, then the $na$-th preprojective algebra of $B = \End_{\stgrmodu \Lambda}(\widetilde{T})$ is isomorphic to a twist of the $a$-th quasi-Veronese of $\Lambda^!$. In order to make this precise, notice first that a graded algebra automorphism $\phi$ of a graded algebra $\Gamma$ induces a graded algebra automorphism $\phi^{[r]}$ of $\Gamma^{[r]}$ by letting \mbox{$\phi^{[r]}((\gamma_{j,k})) = (\phi(\gamma_{j,k}))$}. Here we use the notation $(\gamma_{j,k})$ for the matrix with $\gamma_{j,k}$ in position $(j,k)$. Recall also that we can define a possibly different graded algebra ${}_{\langle \phi \rangle}\Gamma$ with the same underlying vector space structure as $\Gamma$, but with multiplication \mbox{$\gamma \cdot \gamma' = \phi^{i}(\gamma)\gamma'$} for $\gamma'$ in $\Gamma_i$.  

Recall that $\mu$ is the Nakayama automorphism of $\Lambda$, and denote our chosen isomorphism $T_\mu \simeq T$ from before by $\psi$. Note that twisting by $\mu$ might non-trivially permute the summands of $T$. In the case where $\Lambda$ is $n$-$T$-Koszul, let $\overline{\mu}$ be the graded algebra automorphism of $\Lambda^!$ defined on the $i$-th component
\[
\Lambda^!_i = \Ext_{\gr \Lambda}^{ni}(T,T\langle i \rangle) \simeq \Hom_{\stgrmodu \Lambda}(T,\Omega^{-ni} T \langle i \rangle )
\]
by the composition
\[
\Hom_{\stgrmodu \Lambda}(T,\Omega^{-ni} T \langle i \rangle ) \xrightarrow{(-)_{\mu}} \Hom_{\stgrmodu \Lambda}(T_{\mu},\Omega^{-ni} T_{\mu} \langle i \rangle ) \xrightarrow{(-)^{\psi}} \Hom_{\stgrmodu \Lambda}(T,\Omega^{-ni} T \langle i \rangle ),
\]
where 
\[
(\gamma)^{\psi} = \Omega^{-ni}(\psi)\langle i\rangle \circ \gamma \circ \psi^{-1}
\] 
for $\gamma$ in $\Hom_{\stgrmodu \Lambda}(T_{\mu},\Omega^{-ni} T_{\mu} \langle i \rangle )$. 

Before showing \cref{preprojective and Veronese}, recall that a decomposition of $\widetilde{T}$ yields a decomposition of $B = \End_{\stgrmodu \Lambda}(\widetilde{T})$. In the proof below, we denote the summands of $\widetilde{T}$ by \mbox{$X^i = \Omega^{-ni}T \langle i \rangle$}, while $P^i$ is the projective $B$-module which is the preimage of $X^i$ under the equivalence \mbox{$\D^b(\modu B) \xrightarrow{\simeq} \stgrmodu \Lambda$} described in \cref{tilting section}.

\begin{prop} \label{preprojective and Veronese}
Assume \cref{setup} and let $\Lambda$ be $n$-$T$-Koszul. Then we have the isomorphism \mbox{$\Pi_{na}B \simeq {}_{\langle (\overline{\mu}^{-1})^{[a]} \rangle}(\Lambda^!)^{[a]}$} as graded algebras. In particular, we have \mbox{$\Pi_{na}B \simeq (\Lambda^!)^{[a]}$} in the case where $\Lambda$ is graded symmetric.
\end{prop}

\begin{proof}
As $\Lambda$ is $n$-$T$-Koszul, we know from \cref{Characterization} that $\widetilde{T}$ is a tilting object in $\stgrmodu \Lambda$ and that $B$ is $(na-1)$-representation infinite. The $i$-th component of the $na$-th preprojective algebra of $B$ is given by \mbox{$(\Pi_{na}B)_i = \Hom_{\D^b(B)}(B,\nu_{na-1}^{-i} B)$}. For \mbox{$0 \leq j,k \leq a-1$}, we hence consider
\begin{align*}
\Hom_{\D^b(B)}(P^k,\nu_{na-1}^{-i}P^j) &\simeq \Hom_{\stgrmodu \Lambda}(X^k,\Omega^{-(na-1)i-i}X^{j}_{\mu^{-i}} \langle ai \rangle) \\
&\simeq \Hom_{\stgrmodu \Lambda}(T,\Omega^{-n(ai+j-k)}T_{\mu^{-i}} \langle ai + j - k \rangle) \\
& \overset{(\ast)}{\simeq} \Ext_{\gr \Lambda}^{n(ai+j-k)}(T,T_{\mu^{-i}} \langle ai+j - k \rangle) \simeq  \Lambda^!_{ai+j-k}.
\end{align*}
Notice that the first isomorphism is a consequence of the equivalence and correspondence of Serre functors described in \cref{tilting section}, while $(\ast)$ is obtained by applying \cref{lemma} \textit{(3)} and \textit{(8)}. The last isomorphism follows from the assumption \mbox{$T_\mu \simeq T$}. 

Computing our matrix with respect to the decomposition 
\[
B \simeq P^{a-1}\oplus \cdots \oplus P^1 \oplus P^0,
\]
this yields
\begin{equation*}
(\Pi_{na}B)_i \simeq 
\begin{pmatrix}
\Lambda^!_{ai} & \Lambda^!_{ai + 1} & \cdots & \Lambda^!_{ai + a - 1} \\
\Lambda^!_{ai - 1} & \Lambda^!_{ai} & \cdots & \Lambda^!_{ai + a - 2} \\
\vdots  & \vdots  & \ddots & \vdots  \\
\Lambda^!_{ai - a + 1} & \Lambda^!_{ai - a + 2} & \cdots & \Lambda^!_{ai} 
\end{pmatrix},
\end{equation*}	
which shows that our two algebras are isomorphic as graded vector spaces.

In order to see that the multiplications agree, consider the diagram
\[
\begin{tikzcd}
(P^{j},\nu_{na-1}^{-i'}P^{j'}) 
\otimes (P^k,\nu_{na-1}^{-i}P^j) \arrow[d] \arrow[r] & (P^k,\nu_{na-1}^{-(i+i')}P^{j'}) \arrow[d]\\
(\nu_{na-1}^{-i}P^{j},\nu_{na-1}^{-(i + i')}P^{j'}) 
\otimes (P^k,\nu_{na-1}^{-i}P^j) \arrow[d] \arrow[r] & (P^k,\nu_{na-1}^{-(i+i')}P^{j'}) \arrow[d]\\
(X^{j}_{\mu^{-i}}(ai), X^{j'}_{\mu^{-(i+i')}}(a(i+i'))) \otimes (X^{k},X^{j}_{\mu^{-i}}(ai)) \arrow[d] \arrow[r] & (X^{k},X^{j'}_{\mu^{-(i+i')}}(a(i+i'))) \arrow[d]\\
\Lambda^!_{ai'+j'-j} \otimes \Lambda^!_{ai+j-k} \arrow[r] & \Lambda^!_{a(i+i')+j'-k.}
\end{tikzcd}
\] 
For simplicity, we have here suppressed the $\Hom$-notation and denoted \mbox{$\Omega^{-ni}(-)\langle i \rangle$} by $(-)(i)$. The horizontal maps are given by multiplication or composition, and the vertical maps give our isomorphism of graded algebras. In particular, the middle two horizontal maps are merely composition, whereas the top and bottom horizontal maps are the multiplication of
$\Pi_{na}B$ and \mbox{${}_{\langle (\overline{\mu}^{-1})^{[a]} \rangle}(\Lambda^!)^{[a]}$}, respectively. Moreover, the bottom vertical maps are given by 
\[f\otimes g \mapsto \prod_{l = 0}^{i'-1} \psi^{-1}_{\mu^{l-i'}} (ai'+j'-j) \circ f_{\mu^{i}}(-ai-j) \otimes \prod_{l = 0}^{i-1} \psi^{-1}_{\mu^{l-i}}(ai+j-k) \circ g(-k)\]
and 
\[f\circ g \mapsto \prod_{l = 0}^{i + i'-1} \psi^{-1}_{\mu^{l - i - i'}} (a(i + i')+j'-k) \circ (f\circ g)(-k).\]

As the diagram commutes, we can conclude that \mbox{$\Pi_{na}B \simeq {}_{\langle (\overline{\mu}^{-1})^{[a]}
\rangle}(\Lambda^!)^{[a]}$} as graded algebras. If $\Lambda$ is assumed to be graded symmetric, the Nakayama automorphism $\mu$ can be chosen to be trivial, so one obtains \mbox{$\Pi_{na}B \simeq (\Lambda^!)^{[a]}$}.
\end{proof}

In the corollary below, we show that the $(n+1)$-th preprojective of an $n$-representation infinite algebra is isomorphic to the $n$-$T$-Koszul dual of its trivial extension. 

\begin{cor} \label{cor: dual of triv.ext.}
If $A$ is basic $n$-representation infinite, then \mbox{$\Pi_{n+1}A \simeq (\Delta A)^!$} as graded algebras.
\end{cor}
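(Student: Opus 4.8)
The plan is to obtain this as the special case $a=1$ of \cref{preprojective and Veronese}. First I would set $\Lambda = \Delta A$. By the discussion following \cref{graded frobenius lemma}, the trivial extension $\Delta A$ is a graded symmetric finite dimensional algebra of highest degree $a=1$, and $(\Delta A)_0 = A$ has finite global dimension since $A$ is $n$-representation infinite (so $\gldim A \leq n$). Thus $\Delta A$ satisfies the standing assumptions of this section with $T = A$, which is trivially a tilting module over $\Lambda_0 = A$ and whose indecomposable summands are (trivially) permuted by the Nakayama automorphism $\mu = \mathrm{id}$. By \cref{motivating result}, applied with the fixed positive integer taken to be $n+1$, the algebra $\Delta A$ is $(n+1)$-$A$-Koszul, and by definition its $(n+1)$-$A$-Koszul dual is $(\Delta A)^! = \oplus_{i \geq 0}\Ext^{(n+1)i}_{\gr \Delta A}(A, A\langle i \rangle)$.

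Next I would apply \cref{preprojective and Veronese} with this same choice of parameter $n+1$ in place of $n$. Here $\widetilde{T} = \oplus_{i=0}^{a-1}\Omega^{-(n+1)i}A\langle i \rangle = A$ because $a=1$, so $B = \End_{\stgrmodu \Delta A}(\widetilde{T}) = \End_{\stgrmodu \Delta A}(A)$. Exactly as in the proof of \cref{motivating result}, \cref{lemma} \textit{(5)} gives $B \simeq \End_{\gr \Delta A}(A) \simeq A$ as $k$-algebras. Since $\Delta A$ is graded symmetric, the final assertion of \cref{preprojective and Veronese} applies and yields $\Pi_{(n+1)a}B \simeq \big((\Delta A)^!\big)^{[a]}$, that is, $\Pi_{n+1}B \simeq \big((\Delta A)^!\big)^{[1]}$.

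Finally, I would note that the first quasi-Veronese algebra $\Gamma^{[1]}$ of any $\Z$-graded algebra $\Gamma$ is $\Gamma$ itself, since all the matrices occurring in the definition are $1 \times 1$. Combining this with $B \simeq A$ then gives $\Pi_{n+1}A \simeq \Pi_{n+1}B \simeq \big((\Delta A)^!\big)^{[1]} = (\Delta A)^!$, which is the assertion.

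I do not expect any genuine obstacle here: the substance is entirely contained in \cref{motivating result} and \cref{preprojective and Veronese}, both already established. The only points that need a moment's care are verifying that $\Delta A$ genuinely fits the Setup of this section (immediate, as it is graded symmetric of highest degree $1$ with $\gldim(\Delta A)_0 < \infty$ and trivial Nakayama automorphism) and the identification $B \simeq A$ via \cref{lemma} \textit{(5)}; everything else is bookkeeping of the parameter shift from $n$ to $n+1$ and the triviality of the first quasi-Veronese construction.
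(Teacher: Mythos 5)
Your proof is correct and follows essentially the same route as the paper's: invoke \cref{motivating result} to see that $\Delta A$ is $(n+1)$-Koszul with respect to $A$, identify $B=\End_{\stgrmodu \Delta A}(A)\simeq A$ via \cref{lemma}~\textit{(5)}, and apply the graded-symmetric case of \cref{preprojective and Veronese} with $a=1$. The only addition you make is spelling out that the first quasi-Veronese of a graded algebra is the algebra itself, which the paper leaves implicit.
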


\begin{proof}
Let $A$ be a basic $n$-representation infinite algebra. It then follows from \cref{motivating result} that $\Delta A$ is $(n+1)$-Koszul with respect to $A$. By \cref{lemma} part \textit{(3)}, one obtains \mbox{$\End_{\stgrmodu \Delta A}(A) \simeq \End_{\gr \Delta A}(A) \simeq A$}. Recall that $\Delta A$ is graded symmetric of highest degree $1$. Applying \cref{preprojective and Veronese} to $\Delta A$ hence yields our desired conclusion.
\end{proof}

\cref{cor: dual of triv.ext.} is a graded algebra analogue of \cite{Amiot}*{Lemma 4.13}, and can be regarded as a generalized Koszul dual version of \cite{Minamoto & Mori 2011}*{Proposition 4.20} and \cite{Grant-Iyama}. Note moreover that this corollary can be seen to follow from \cite{Keller}*{Lemma 4.4(b)}. 

We are now able to give an affirmative answer to our motivating question from the introduction, i.e.\ to deduce the equivalence \eqref{motivating equivalence} \mbox{$\stgrmodu (\Delta A) \simeq \D^b(\qgr \Pi_{n+1}A)$} as a consequence of higher Koszul duality.  

Recall that an $n$-representation infinite algebra $A$ is called \textit{$n$-representation tame} if the associated $(n+1)$-preprojective $\Pi_{n+1}A$ is a noetherian algebra over its center \cite{Herschend-Iyama-Oppermann}*{Definition 6.10}. Notice that a noetherian algebra is graded right coherent, so our result holds in this case.

\begin{cor} \label{n-rep tame}
Let $A$ be a basic $n$-representation infinite algebra with $\Pi_{n+1}A$ graded right coherent. There are then equivalences of triangulated categories as indicated in the commutative diagram
\[
    \begin{tikzcd}[column sep=22, row sep=20] 
   \D^b(\gr \Delta A) \arrow[r,"\simeq"] \arrow[d] & \D^b(\gr \Pi_{n+1}A)) \arrow[d]  \\
    \stgrmodu \Delta A \arrow[r,dashed,"\simeq"] & \D^b(\qgr \Pi_{n+1}A))
    \end{tikzcd}
\]
such that the equivalence on the top descends to the equivalence on the bottom. In particular, this holds if $A$ is $n$-representation tame.
\end{cor}

\begin{proof}
We get the equivalence $\D^b(\gr \Delta A) \simeq \D^b(\gr \Pi_{n+1}A)$ by \cref{Existence of T-Koszul equivalence} combined with \cref{motivating result} and \cref{cor: dual of triv.ext.}. By \cref{answer motivating question}, this equivalence descends to yield \mbox{$\stgrmodu (\Delta A) \simeq \D^b(\qgr \Pi_{n+1}A)$}. 
\end{proof}

\section{Higher almost Koszulity and $n$-representation finite algebras} \label{sec 7}

\label{section almost n-T}
In our previous section, we gave connections between higher Koszul duality and $n$-representation infinite algebras. Having developed our theory for one part of the higher hereditary dichotomy, it is natural to ask whether something similar holds in the $n$-representation finite case. To provide an answer to this question, we introduce the notion of higher almost Koszulity. As before, this should be formulated relative to a tilting module over the degree $0$ part of the algebra, which is itself assumed to be finite dimensional and of finite global dimension. Notice that after having presented the definitions and basic examples, we prove our results given the same standing assumptions as in \cref{T-Koszul and n-rep inf}, see \cref{setup}.

Our definition of an almost $n$-$T$-Koszul algebra is inspired by and generalizes the almost Koszul algebras of \cite{Brenner-Butler-King}. 

\begin{defin}[See \cite{Brenner-Butler-King}*{Definition 3.1}] \label{almost Koszul}
Assume that $\Lambda_0$ is semisimple. We say that $\Lambda$ is \textit{(right) almost Koszul} if there exist integers $p,q \geq 1$ such that the following conditions hold:
\begin{enumerate}
    \item $\Lambda_i = 0$ for all $i > p$.
    \item There is a graded complex 
        \[
        0 \rightarrow P^{-q} \rightarrow \cdots \rightarrow P^{-1} \rightarrow P^0 \rightarrow 0
        \]
        of projective right $\Lambda$-modules such that each $P^{-i}$ is generated by its component $P^{-i}_{i}$ and the only non-zero cohomology is $\Lambda_0$ in internal degree $0$ and \mbox{$P^{-q}_{q}\otimes_{\Lambda_0} \Lambda_p$} in internal degree $p+q$.
\end{enumerate}
If $\Lambda$ is almost Koszul for integers $p$ and $q$, one also says that $\Lambda$ is \textit{$(p,q)$-Koszul}. 
\end{defin}

Roughly speaking, if $\Lambda$ is almost Koszul, then taking tensor products over $\Lambda_0$ yields a somewhat periodic projective resolution of $\Lambda_0$ which is piecewise linear. This is similar to how the inverse Serre functor of an $n$-representation finite algebra acts on indecomposable projectives. However, for the latter, the periods of different indecomposable projectives need not be equal. Hence, unlike for almost Koszul algebras, we must allow the period of graded $n\mathbb{Z}$-orthogonality to be different for each of the individual indecomposable summands of our tilting module.

Motivated by our observations above, let us now define what it means for a module to be almost $(n,\underline{g},\underline{\ell})$-self-orthogonal. Recall that we consider a fixed decomposition \mbox{$T \simeq \oplus^t_{i = 1} T^{i}$} into indecomposable summands. 

\begin{defin} \label{periodisk ortogonal}
Let $T \simeq \oplus^t_{i = 1} T^{i}$ be a finitely generated basic graded $\Lambda$-module concentrated in degree $0$. Assume that for each \mbox{$i \in \{1,\dots,t\}$}, there exists an object \mbox{$T' \in \add T$} and positive integers $\ell_i$ and $g_i$ such that the following conditions hold:
\begin{enumerate}
    \item $\Omega^{-\ell_i}T^i \simeq T'\langle -g_i \rangle$.
    \item $\Ext^j_{\gr \Lambda}(T, T^i \langle k \rangle) = 0$ for $k \in \mathbb{Z}$ and $j \geq 0$ satisfying $j \neq nk$ and $j < \ell_i$.
\end{enumerate}
\sloppy We then say that $T$ is \textit{almost $(n,\underline{g},\underline{\ell})$-self-orthogonal} for \mbox{$\underline{g}=(g_1,\dots,g_t)$} and \mbox{$\underline{\ell}=(\ell_1,\dots,\ell_t)$}.
\end{defin}

This leads to our definition of what it means for an algebra to be almost $n$-$T$-Koszul, which is new even for $n = 1$.

\begin{defin}\label{almost n-T-Koszul}
	Assume $\gldim \Lambda_0 < \infty$ and let $T$ be a graded $\Lambda$-module concentrated in degree $0$. The highest degree of $\Lambda$ is denoted by $a$. We say that $\Lambda$ is \textit{almost $n$-$T$-Koszul} or \textit{almost $n$-Koszul with respect to $T$} if the following \mbox{conditions hold}:
	\begin{enumerate}
		\item $T$ is a tilting $\Lambda_0$-module.
		\item $T$ is almost $(n,\underline{g},\underline{\ell})$-self-orthogonal as a $\Lambda$-module.
        \item The parameters $\underline{g}$ and $\underline{\ell}$  satisfy $\ell_i = n(g_i - a) + 1$ for  $1 \leq i \leq t$. 
	\end{enumerate}
 An almost $n$-$T$-Koszul algebra is called \textit{minimally almost \mbox{$n$-$T$-Koszul}} or \textit{minimally almost $n$-Koszul with respect to $T$} if there exist no integers $0 < \ell_i' < \ell_i$ and $g_i'$ satisfying $\Omega^{-\ell_i'} T^{i} \simeq T'\langle - g_i' \rangle$ for $1 \leq i \leq t$ and $T' \in \add T$. By \cref{periodisk ortogonal} (2), such an isomorphism can only happen when $\ell_i' = ng_i'$.
\end{defin}

Whenever we work with an almost $n$-$T$-Koszul algebra, we use the notation $\ell_i$ and $g_i$ for integers given as in \cref{periodisk ortogonal}.

Let us now verify that \cref{almost n-T-Koszul} is indeed a generalization of \cref{almost Koszul}.

\begin{prop}\label{ex: almost-T-Koszul}
Let $\Lambda$ be a $(p,q)$-Koszul algebra. Then $\Lambda$ is of highest degree $p$ and is almost \mbox{$1$-$\Lambda_0$-Koszul} for the parameters given by $\ell_i = q + 1$ and $g_i = p + q$ for every \mbox{$i \in \{1,\ldots, t\}$}. Moreover, $\Lambda$ is minimally almost $1$-$\Lambda_0$-Koszul whenever $p > 1$.
\end{prop}
\begin{proof} 
We begin by showing that if $\Lambda$ is $(p,q)$-Koszul, then the highest degree $a$ of $\Lambda$ is equal to $p$. Indeed, note that by \cref{almost Koszul} (1), we have $a \leq p$, while \cref{almost Koszul} (2) states that $P^{-q}_q \otimes_{\Lambda_0}\Lambda_p \neq 0$, so $\Lambda_p \neq 0$.

We now continue by showing that $\Lambda$ is $1$-$\Lambda_0$-Koszul for the indicated parameters.
Since $\Lambda_0$ is semisimple, it is immediate that $\gldim \Lambda_0 < \infty$. 
Moreover, the module $T = \Lambda_0$ is concentrated in degree $0$ by definition. 
We thus proceed to check conditions (1) through (3) of \cref{almost n-T-Koszul}.

For (1), we note that it follows immediately from \cref{tilting module def} that $T = \Lambda_0$ is a $\Lambda_0$-tilting module. 
We next verify (2), i.e.\ that $\Lambda_0$ is almost $(1,\underline{g},\underline{\ell})$-self-orthogonal as a $\Lambda$-module.
For this, we use that an algebra is left $(p,q)$-Koszul if and only if it is right $(p,q)$-Koszul \cite{Brenner-Butler-King}*{Proposition 3.4}. Consequently, we get a left projective resolution of $\Lambda_0$, which can be dualized to yield a right injective resolution of $\Lambda_0$, allowing us to conclude that condition (2) of \cref{almost Koszul} implies conditions (1) and (2) of \cref{periodisk ortogonal}. This shows that part (2) of \cref{almost n-T-Koszul} holds. 
The parameters $\ell_i$ and $g_i$ then satisfy \mbox{\cref{almost n-T-Koszul} (3)} since 
$$1\cdot(g_i - a) + 1 = (p + q - p) + 1 = q + 1 = \ell_i,$$
and we have thus shown the first statement.

We now show the second statement in the proposition, i.e.\ that $\Lambda$ is minimally almost $1$-$\Lambda_0$-Koszul whenever $p > 1$. For this, observe that if $p > 1$, then $\ell_i$ cannot be smaller since $q$ is the length of the maximal linear part of the projective resolution of the simple modules.
\end{proof}

\begin{remark}
In light of this, we note that \cref{almost n-T-Koszul} (3) is a requirement that the highest degree $a$ and the parameters $\ell_i$ and $g_i$ of an almost $n$-$T$-Koszul algebra are related arithmetically in a way that generalizes parts of \cref{almost Koszul} (2). Indeed, as just shown in \cref{ex: almost-T-Koszul} above, any $(p,q)$-Koszul algebra $\Lambda$ is almost $1$-$\Lambda_0$-Koszul for the parameters obtained by setting $\ell_i = q + 1$ and $g_i = p + q$ for $i \in \{1, \ldots, t\}$.
Then \cref{almost Koszul} (2) implies that for every indecomposable $S \in \add \Lambda_0$, there exists an $S' \in \add \Lambda_0$ such that $\Omega^{q + 1} S \simeq S' \langle p + q \rangle$, which is precisely what yields that part (1) of \cref{periodisk ortogonal} holds for this choice of $\ell_i$ and $g_i$. In fact, as we saw, this is the only possible choice of $\ell_i$ and $g_i$ if $p > 1$.
Condition (3) in \cref{almost n-T-Koszul} should be interpreted as a higher analogue of this.
\end{remark}

Our main result in \cref{sec 7} is \cref{nrepfinchar}, which gives a characterization of an important class of minimally almost $n$-$T$-Koszul algebras. Before presenting this result, we provide an overview of some classes of examples of almost $n$-$T$-Koszul algebras.

Recall that a Dynkin quiver is said to have \textit{bipartite orientation} if every vertex is either a sink or a source. Just as in the study of almost Koszul algebras in \cite{Brenner-Butler-King}, trivial extensions of bipartite Dynkin quivers provide an important class of algebras which are minimally almost $n$-$T$-Koszul. See for instance \cite{Herschend-Iyama}*{Section 3.1} for an overview of the Coxeter numbers of different Dynkin quivers. 

\begin{prop}\label{prop: bipartite}
Let $Q$ be a bipartite Dynkin quiver with Coxeter number \mbox{$h \geq 4$}. Consider $\Lambda = \Delta kQ$ with grading given by putting arrows in degree $1$. Then $\Lambda$ is minimally almost $1$-$\Lambda_0$-Koszul. 
\end{prop}

\begin{proof}
As $Q$ is a bipartite Dynkin quiver and $h \geq 4$, it follows from \cite{Brenner-Butler-King}*{Proposition 3.11, Corollary 4.3} that $\Lambda$ is $(2,h-2)$-Koszul in the sense of \cref{almost Koszul}. Our conclusion now follows by \cref{ex: almost-T-Koszul}.
\end{proof}

We now illustrate the proposition above with a concrete example.

\begin{example}\label{ex: bipartite}
Let $Q$ be the quiver 
\[
\begin{tikzcd}
1 \rar["\alpha_0"] & 2 & 3 \lar[swap, "\alpha_1"].
\end{tikzcd}
\]
The trivial extension $\Lambda = \Delta kQ$ is given by 
\begin{center}
\begin{tikzcd}
1 \arrow[r, bend left, "\alpha_0"] & 2 \arrow[l, bend left, "\alpha_0'"] \arrow[r, bend right, swap, "\alpha_1'"] & 3 \arrow[l, swap, bend right, "\alpha_1"]
\end{tikzcd}
\end{center}
with relations $\alpha_{0}\alpha'_{1}$,  $\alpha_{1}\alpha'_{0}$, and $\alpha'_{0}\alpha_{0} - \alpha'_{1}\alpha_{1}$. This algebra is graded symmetric of highest degree $2$ with grading induced by letting the arrows be in degree $1$. As $Q$ is a bipartite Dynkin quiver, we have that $\Lambda$ is minimally almost $1$-$\Lambda_0$-Koszul by \cref{prop: bipartite}.
 
The indecomposable projective injectives of $\Lambda$ can be represented by the diagrams 
\begin{center}
\begin{tikzpicture}
\node[scale=.75] (a) at (0,0){$ 
\arraycolsep=.8pt\def\arraystretch{0.8}
\begin{array}{c}
1_0 \\
2_1\\
1_2
\end{array}
$};
\node[scale=.75] (b) at (2,0){$ 
\arraycolsep=.8pt\def\arraystretch{0.8}
\begin{array}{ccc}
& 2_0  & \\
1_1 & & 3_1 \\
& 2_2 & 
\end{array}
$};
\node[scale=.75] (c) at (4,0){$ 
\arraycolsep=.8pt\def\arraystretch{0.8}
\begin{array}{c}
3_0 \\
2_1 \\
3_2
\end{array},
$};
\end{tikzpicture}
\end{center}
where the subscripts indicate the degrees of the basis elements. One can verify directly that $\widetilde{\Lambda_0} =\Lambda_0\oplus\Omega^{-1}\Lambda_0 \langle 1 \rangle$ is a tilting object in $\stgrmodu \Lambda$ with $1$-representation finite endomorphism algebra. Note that this is a specific case of what we prove more generally in \cref{nrepfinchar}. Here, the endomorphism algebra of $\widetilde{\Lambda_0}$ in $\stgrmodu \Lambda$ decomposes as the direct product of the endomorphism algebras of 
\begin{center}
\begin{tikzpicture}
\node[scale=.75] (a) at (-3,-.25){$ 
\arraycolsep=.8pt\def\arraystretch{0.8}
\begin{array}{c}
1_0
\end{array}
$};
\node[scale=.75] (a) at (0,0){$ 
\arraycolsep=.8pt\def\arraystretch{0.8}
\begin{array}{ccc} 
& 2_{-1}  & \\
1_0 & & 3_0\\
\end{array}
$};
\node[scale=.75] (a) at (3,-.25){$ 
\arraycolsep=.8pt\def\arraystretch{0.8}
\begin{array}{c} 
3_0
\end{array}
$};
\end{tikzpicture}
\end{center}
and 
\begin{center}
\begin{tikzpicture}
\node[scale=.75] (b) at (0,-.25){$ 
\arraycolsep=.8pt\def\arraystretch{0.8}
\begin{array}{c} 
2_0
\end{array}
$};
\node[scale=.75] (a) at (-3,0){$ 
\arraycolsep=.8pt\def\arraystretch{0.8}
\begin{array}{c} 
1_{-1}  \\
2_0\\
\end{array}
$};
\node[scale=.75] (c) at (3,0){$ 
\arraycolsep=.8pt\def\arraystretch{0.8}
\begin{array}{c} 
3_{-1} \\
2_0\\
\end{array},
$};
\end{tikzpicture}
\end{center}
which are respectively isomorphic to the path algebras of the quivers
\begin{center}
\begin{tikzpicture}
\node[scale=1] (a) at (0,0){
\begin{tikzcd}
1 & 2 \lar \rar & 3
\end{tikzcd}
};
\end{tikzpicture}
\end{center}
and 
\begin{center}
\begin{tikzpicture}
\node[scale=1] (a) at (0,0){
\begin{tikzcd}
1 \rar & 2 & \lar 3.
\end{tikzcd}
};
\end{tikzpicture}
\end{center}
\end{example}

It is worth noting that algebras may often be almost $n$-$T$-Koszul without being $(p,q)$-Koszul.
As we show below, the trivial extension $\Delta(kQ)$ for a non-bipartite Dynkin quiver $Q$ is $(p,q)$-Koszul only in a limited number of cases. In contrast to this, \cref{nrepfinchar} shows that $\Delta(kQ)$ is always minimally almost \mbox{$2$-$T$-Koszul} for $T = kQ$ for a Dynkin quiver $Q$; see \cref{cor sec 7}.

\begin{prop}
Let $Q$ be a connected non-bipartite Dynkin quiver $Q$. Then $\Delta(kQ)$ is $(p,q)$-Koszul for some $(p,q)$ if and only if $Q$ is linearly oriented $A_n$ and $(p,q) = (n,1)$. 
\end{prop}
\begin{proof}
Since $Q$ is assumed non-bipartite, it follows by using results such as in \cite{Schroer}, \cite{Fernandez-Platzeck'02} or \cite{Fernandez-et-al'22} that $\Delta(kQ)$ is not quadratic. By e.g.\ \cite{Schroer}, this is clear in the case where $Q$ has three or more vertices, as the non-bipartite assumption then implies that $kQ$ has a maximal path of length greater than one. 
Moreover, the case where $Q$ has one vertex is excluded by the non-bipartite assumption, and the case of two vertices can be checked to yield relations given by all paths of length three, i.e.\ non-quadratic relations.
Consequently, \cite{Brenner-Butler-King}*{Proposition 3.7} implies that \mbox{$\Delta(kQ)$} is not $(p,q)$-Koszul for $q \geq 2$.

Now note that by \cite{Brenner-Butler-King}*{Proposition 3.5}, an algebra is $(p,1)$-Koszul if and only if it is a truncated algebra, i.e.\ a quotient of a path algebra by some power of its arrow ideal. 
One can check that $\Delta(kQ)$ is a truncated algebra if and only if $Q$ is linearly oriented $A_n$ by again using one of \cite{Schroer}, \cite{Fernandez-Platzeck'02} or \cite{Fernandez-et-al'22}.
Indeed, by using e.g.\ \cite{Schroer}, we see that $\Delta(kQ)$ has a non-monomial relation if and only if there are two or more distinct paths of maximal length in $Q$.
Moreover, by again using \cite{Schroer}, we get that $\Delta(kQ)$ is a truncated algebra with respect to the $(n+1)$-th power of the arrow ideal whenever $Q$ is linearly oriented $A_n$, and hence we are done by \cite{Brenner-Butler-King}*{Proposition 3.5}.
\end{proof}

In the following example, we consider a trivial extension of a representation finite hereditary algebra and verify that it is minimally almost $2$-$T$-Koszul for \mbox{$T = kQ$}. 

\begin{example}
Consider the algebra $\Lambda$ given by the quiver
\vspace{0.5em}
\[
\begin{tikzcd}
1 \rar["\alpha_1"] & 2 \rar["\alpha_2"] & 3 \arrow[ll, bend left, blue, "\rho"]
\end{tikzcd}
\]
\vspace{0.5em}
with relations $\alpha_1 \alpha_2 \rho \alpha_1, \alpha_2 \rho \alpha_1 \alpha_2$ and $\rho \alpha_1 \alpha_2 \rho$. Note that $\Lambda$ is the trivial extension of the path algebra of the quiver $A_3$, which is given as the black part of the quiver above. We equip $\Lambda$ with the trivial extension grading, meaning that we let the blue arrow be in degree $1$ and the rest in degree $0$. 

The indecomposable projective injectives of $\Lambda$ can be represented as
\vspace{0.5em}
\begin{center}
\begin{tikzpicture}
\node[scale=.75] (a) at (0,0){$ 
\arraycolsep=.8pt\def\arraystretch{0.8}
\begin{array}{c}
1_0 \\
2_0\\
3_0\\
1_1
\end{array}
$};
\node[scale=.75] (b) at (2,0){$ 
\arraycolsep=.8pt\def\arraystretch{0.8}
\begin{array}{c}
2_0 \\
3_0\\
1_1\\
2_1
\end{array}
$};
\node[scale=.75] (c) at (4,0){$ 
\arraycolsep=.8pt\def\arraystretch{0.8}
\begin{array}{c}
3_0 \\
1_1\\
2_1\\
3_1
\end{array},
$};
\end{tikzpicture}
\end{center}
\vspace{0.5em}
where the subscripts indicate the degrees of the basis elements. We claim that $\Lambda$ is minimally almost $2$-$\Lambda_0$-Koszul, and verify this using the graded minimal injective resolutions of the summands of $\Lambda_0$. The first parts of these resolutions are given as follows: 
\newpage 
\begin{center}
\begin{tikzpicture}
\node[scale=.75] (a) at (0,0){
$
\arraycolsep=.8pt\def\arraystretch{0.8}
\begin{array}{c}
1_0 \\
2_0 \\
3_0
\end{array}
$
};
\node[scale=.75] (b) at (1.25,1.5){
$
\arraycolsep=.8pt\def\arraystretch{0.8}
\begin{array}{c}
3_{-1} \\
1_0 \\
2_0 \\
3_0
\end{array}
$
};
\node[scale=.75] (c) at (2.5,0){
$
\arraycolsep=.8pt\def\arraystretch{0.8}
\begin{array}{ccc}
3_{-1}\\
\end{array}
$
};
\draw [->] (a) edge (b) (b) edge (c) ;
\end{tikzpicture}
\end{center}

\begin{center}
\begin{tikzpicture}
\node[scale=.75] (a) at (0,0){
$
\arraycolsep=.8pt\def\arraystretch{0.8}
\begin{array}{c}
2_0 \\
3_0 \\
\end{array}
$
};
\node[scale=.75] (b) at (1.25,1.5){
$
\arraycolsep=.8pt\def\arraystretch{0.8}
\begin{array}{c}
3_{-1} \\
1_{0} \\
2_0 \\
3_0 \\
\end{array}
$
};
\node[scale=.75] (c) at (2.5,0){
$
\arraycolsep=.8pt\def\arraystretch{0.8}
\begin{array}{ccc}
3_{-1}\\
1_{0}\\
\end{array}
$
};
\draw [->] (a) edge (b) (b) edge (c) ;
\node[scale=.75] (d) at (3.75,1.5){
$
\arraycolsep=.8pt\def\arraystretch{0.8}
\begin{array}{ccc}
1_{-1}\\
2_{-1}\\
3_{-1}\\
1_{0}\\
\end{array}
$
};
\node[scale=.75] (e) at (5,0){
$
\arraycolsep=.8pt\def\arraystretch{0.8}
\begin{array}{ccc}
1_{-1}\\
2_{-1}\\
\end{array}
$
};
\node[scale=.75] (f) at (6.25,1.5){
$
\arraycolsep=.8pt\def\arraystretch{0.8}
\begin{array}{ccc}
2_{-2}\\
3_{-2}\\
1_{-1}\\
2_{-1}\\
\end{array}
$
};
\node[scale=.75] (g) at (7.5,0){
$
\arraycolsep=.8pt\def\arraystretch{0.8}
\begin{array}{ccc}
2_{-2}\\
3_{-2}\\
\end{array}
$
};
\draw [->] (a) edge (b) (b) edge (c) (c) edge (d) (d) edge (e) (e) edge (f) (f) edge (g);
\end{tikzpicture}
\end{center}

\begin{center}
\begin{tikzpicture}
\node[scale=.75] (c) at (0,0){
$
\arraycolsep=.8pt\def\arraystretch{0.8}
\begin{array}{ccc}
3_{0}\\
\end{array}
$
};
\node[scale=.75] (d) at (1.25,1.5){
$
\arraycolsep=.8pt\def\arraystretch{0.8}
\begin{array}{c}
3_{-1} \\
1_{0} \\
2_{0} \\
3_{0}
\end{array}
$
};
\node[scale=.75] (e) at (2.5,0){
$
\begin{array}{c}
3_{-1} \\
1_{0} \\
2_{0} \\
\end{array}
$
};
\node[scale=.75] (f) at (3.75,1.5){
$
\begin{array}{c}
2_{-1} \\
3_{-1} \\
1_{0} \\
2_{0} \\
\end{array}
$
};
\node[scale=.75] (g) at (5,0){
$
\begin{array}{c}
2_{-1} \\
\end{array}
$
};
\node[scale=.75] (h) at (6.25,1.5){
$
\begin{array}{c}
2_{-2} \\
3_{-2} \\
1_{-1} \\
2_{-1} \\
\end{array}
$
};
\node[scale=.75] (i) at (7.5,0){
$
\begin{array}{c}
2_{-2} \\
3_{-2} \\
1_{-1} \\
\end{array}
$
};
\node[scale=.75] (j) at (8.75,1.5){
$
\begin{array}{c}
1_{-2} \\
2_{-2} \\
3_{-2} \\
1_{-1} \\
\end{array}
$
};
\node[scale=.75] (k) at (10,0){
$
\begin{array}{c}
1_{-2}
\end{array}
$
};
\node[scale=.75] (l) at (11.25,1.5){
$
\begin{array}{c}
1_{-3} \\
2_{-3} \\
3_{-3} \\
1_{-2} \\
\end{array}
$
};
\node[scale=.75] (m) at (12.5,0){
$
\begin{array}{c}
1_{-3} \\
2_{-3} \\
3_{-3} \\
\end{array}
$
};
\draw [->] (c) edge (d) (d) edge (e) (e) edge (f) (f) edge (g) (g) edge (h) (h) edge (i) (i) edge (j) (j) edge (k) (k) edge (l) (l) edge (m);
\end{tikzpicture}
\end{center}
We now choose $\underline{\ell} = (1,3,5)$ and $\underline{g} = (1, 2, 3)$. Since 
\[
(1,3,5) = 2\cdot (1 - 1,2 - 1,3 - 1) + (1,1,1),
\]
we see that condition (3) of \cref{almost n-T-Koszul} holds. Using the injective resolutions above, it is straightforward to check that $\Lambda_0$ is almost $(2,\underline{g},\underline{\ell})$-self-orthogonal and that the minimality condition of \cref{almost n-T-Koszul} holds. As in \cref{ex: bipartite}, one can verify that $\widetilde{\Lambda_0} = \Lambda_0$ is a tilting object in $\stgrmodu \Lambda$. In this case, its endomorphism algebra is isomorphic to $\Lambda_0$.  
\end{example}

Before we present \cref{nrepfinchar}, we give an example of algebras that are almost $1$-$T$-Koszul, but not $(p,q)$-Koszul. 

\begin{example} 
Let \mbox{$R \coloneqq k[x]/\langle x^{m + 1} \rangle$} for $m \geq 1$. Given a basic finite dimensional algebra $A$ of finite global dimension, we consider $\Lambda \coloneqq A \otimes_k R$ endowed with the grading induced by letting $A$ be a graded algebra concentrated in degree $0$ and $R$ have the grading given by letting $x$ be in degree $1$. We claim that $\Lambda$ is almost \mbox{$1$-$DA$-Koszul}, and that it is minimally almost $1$-$DA$-Koszul for $m > 1$. Note that if $m > 1$, we also have that $\Lambda$ cannot be $(p,q)$-Koszul for $q \geq 2$ as it is not quadratic; moreover, as long as $A$ is also not semisimple, it cannot be $(p,q)$-Koszul for any $p$ and $q$ as it $\Lambda$ is then neither quadratic nor a truncated algebra.

To show that $\Lambda$ is $1$-$DA$-Koszul, consider the minimal graded projective resolution of $A$ as a left $\Lambda$-module, for which the initial two terms can be written as
\[
\begin{tikzpicture}
\node[scale=.75] (a) at (0,0){
\begin{tikzcd}
& (A \oplus xA \oplus \cdots \oplus x^{m}A)\langle 1 \rangle \drar["- \cdot x"] & & A \oplus xA \oplus \cdots \oplus x^{m}A \drar & \\
A\langle m + 1 \rangle \urar["- \cdot x^{m}"] & &  xA \oplus \cdots \oplus x^{m}A \urar & & A.
\end{tikzcd}
};
\end{tikzpicture}
\]
Dualizing this yields a graded injective resolution of $DA$ as a right $\Lambda$-module. We set $\ell_i = 2$ and $g_i = m+1$ for all $i \in \{1,\ldots,t\}$ and observe that \mbox{condition (3)} of \cref{almost n-T-Koszul} holds as $a = m$. From the resolution above, we deduce that $\Ext^{1}_{\gr \Lambda}(DA, (DA)\langle i \rangle) \neq 0$ implies $i=1$, which shows that $DA$ is $(1,\underline{g},\underline{\ell})$-self-orthogonal. Noting that $DA$ is a tilting module over $\Lambda_0 = A$ since the global dimension of $A$ is finite, we can conclude that $\Lambda$ is $1$-$DA$-Koszul.

If $m > 1$, then $\Lambda$ is minimally almost $1$-$DA$-Koszul since \mbox{$D(xA \oplus \cdots \oplus x^{m}A)$} is not isomorphic as a $\Lambda$-module to an object in $\add DA$, entailing that we cannot choose $\ell_i$ smaller for any $i \in \{1, \ldots, t\}$. However, observe that if $m = 1$, then we can have $\ell_i = g_i = 1$ since $D(xA) \simeq D(A)\langle -1 \rangle$. It is straightforward to check that $\Lambda$ is then $1$-$DA$-Koszul for this choice of $\ell_i$ and $g_i$. 
\end{example}

From now on, we make the same standing assumptions as we did in order to develop our theory in \cref{T-Koszul and n-rep inf}, see \cref{setup}. This means that \mbox{$\Lambda=\oplus_{i \geq 0}\Lambda_i$} is assumed to be a finite dimensional graded Frobenius algebra of highest degree \mbox{$a \geq 1$} with $\gldim \Lambda_0 < \infty$. Moreover, we consider a basic graded $\Lambda$-module $T$ which is concentrated in degree $0$ and a tilting module over $\Lambda_0$. We assume \mbox{$T_\mu \simeq T$} as $\Lambda$-modules for the Nakayama automorphism $\mu$ of $\Lambda$.

We are now ready to state the main result of \cref{sec 7}, which is an almost $n$-$T$-Koszul analogue of \cref{Characterization}.

\begin{thm}\label{nrepfinchar}
Assume \cref{setup}. The following statements are equivalent:
\begin{enumerate} 
    \item $\Lambda$ is minimally almost $n$-$T$-Koszul.  
    \item $\widetilde{T} = \oplus_{i=0}^{a-1}\Omega^{-ni}T \langle i \rangle$ is a tilting object in $\stgrmodu \Lambda$ and $B = \End_{\stgrmodu \Lambda}(\widetilde{T})$ is $(na-1)$-representation finite.
\end{enumerate}
\end{thm}

We divide the proof of \cref{nrepfinchar} into a series of smaller steps, culminating in \cref{nrepfinchar2to1} and \cref{nrepfinchar1to2} which together prove the characterization. Note that along the way, we show that a minimally almost $n$-$T$-Koszul algebra $\Lambda$ determines combinatorial data including a permutation, and that this corresponds to similar combinatorial data \cite{Herschend-Iyama}*{Proposition 0.2} obtained from $B$ being $(na-1)$-representation finite.

Let us first show that a minimally almost $n$-$T$-Koszul algebra determines a permutation $\pi$ on the set $\{1,\dots,t\}$ in a natural way.

\begin{lem}\label{indeclem}
Assume \cref{setup} and let $\Lambda$ be minimally almost $n$-$T$-Koszul. There is then a permutation $\pi$ on the set $\{1,\dots,t\}$ such that 
    \[
    \Omega^{-\ell_i}T^i \simeq T^{\pi(i)}\langle -g_i \rangle
    \]
for each $i \in \{1,\dots,t\}$.
\end{lem}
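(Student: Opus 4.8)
The plan is to produce $\pi$ as a well-defined map $\{1,\dots,t\}\to\{1,\dots,t\}$ and then show it is injective; since the set is finite, this already makes it a permutation with the required property.

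First I would record that each indecomposable summand $T^i$ is \emph{non-projective} as a graded $\Lambda$-module. Since $a\geq 1$ and $\Soc\Lambda\subseteq\Lambda_a$, every indecomposable graded projective $\Lambda$-module has nonzero homogeneous components in at least two distinct degrees (its top sits in some degree and its socle $a$ degrees higher), so none of them is concentrated in a single degree, whereas each $T^i$ is concentrated in degree $0$. Because $\Lambda$ is graded Frobenius, hence self-injective, the category $\stgrmodu\Lambda$ is triangulated with shift $\Omega^{-1}$, an autoequivalence; therefore the (minimally chosen, projective-free) cosyzygy $\Omega^{-l_i}T^i$ is again indecomposable and nonzero. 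By \cref{periodisk ortogonal}\,(1) we have $\Omega^{-l_i}T^i\simeq T'\langle -g_i\rangle$ for some $T'\in\add T$, and since the left-hand side is indecomposable, so is $T'$; as $T$ is basic, $T'$ is isomorphic to exactly one of $T^1,\dots,T^t$. Calling its index $\pi(i)$ gives the sought isomorphism $\Omega^{-l_i}T^i\simeq T^{\pi(i)}\langle -g_i\rangle$ and defines $\pi$.

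For injectivity I would argue as follows. Suppose $\pi(i)=\pi(i')=:j$, and assume $l_i\leq l_{i'}$ without loss of generality. Combining $\Omega^{-l_i}T^i\simeq T^{j}\langle -g_i\rangle$ with $\Omega^{-l_{i'}}T^{i'}\simeq T^{j}\langle -g_{i'}\rangle$, and applying $\Omega^{l_i}$ (which commutes with graded shift and is inverse to $\Omega^{-l_i}$ on $\stgrmodu\Lambda$), one obtains $\Omega^{-(l_{i'}-l_i)}T^{i'}\simeq T^{i}\langle -(g_{i'}-g_i)\rangle$ in $\stgrmodu\Lambda$, and hence as honest modules, both sides being projective-free. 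The relations $l_m=nam_m-n\sigma_m+1$ and $g_m=a(m_m+1)-\sigma_m$ from \cref{higheralmostKoszuldef} yield $l_{i'}-l_i=n(g_{i'}-g_i)$ together with $g_{i'}\geq g_i$. If $l_i<l_{i'}$, then $k:=g_{i'}-g_i$ is a positive integer satisfying $0<nk=l_{i'}-l_i<l_{i'}$ (using $l_i>0$) and $\Omega^{-nk}T^{i'}\simeq T^{i}\langle -k\rangle$ with $T^i\in\add T$, contradicting the minimality clause \cref{higheralmostKoszuldef}\,(3) for the index $i'$. Hence $l_i=l_{i'}$, so also $g_i=g_{i'}$, and the isomorphism above reads $T^{i'}\simeq T^{i}$; since $T$ is basic, $i=i'$. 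Thus $\pi$ is injective, hence a permutation.

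The only genuinely load-bearing step is the injectivity argument, and more precisely converting a repetition $\pi(i)=\pi(i')$ with $l_i<l_{i'}$ into a violation of \cref{higheralmostKoszuldef}\,(3): this is exactly where the explicit shape of $l_i$ and $g_i$ is needed, to guarantee that $l_{i'}-l_i$ is again a positive multiple of $n$ strictly below $l_{i'}$ and that the accompanying shift is precisely $g_{i'}-g_i=(l_{i'}-l_i)/n$. The well-definedness of $\pi$, the non-projectivity of the $T^i$, and the identification of stable and module isomorphisms for projective-free modules are all routine.
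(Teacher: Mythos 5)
Your proof is correct and follows essentially the same approach as the paper: define $\pi$ via indecomposability of $\Omega^{-l_i}T^i$, then deduce injectivity from the minimality clause of Definition~\ref{higheralmostKoszuldef}. Your spelling-out of why $l_{i'}-l_i$ is a positive multiple of $n$ strictly below $l_{i'}$, and your more direct degree argument for non-projectivity of $T^i$ (instead of invoking Lemma~\ref{lemma}(3)), are harmless elaborations of the same argument.
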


\begin{proof}
Let $i \in \{1,\dots,t\}$. As $T$ is almost $(n,\underline{g},\underline{\ell})$-self-orthogonal, there exists an object $T' \in \add T$ such that 
\[
\Omega^{-\ell_i}T^i \simeq T'\langle -g_i \rangle.
\]
Recall that $T$ is concentrated in degree $0$ and that $a \geq 1$. Since it follows from \cref{graded frobenius lemma} that $\Soc \Lambda \subseteq \Lambda_a$, this implies that $T^i$ is not projective as a $\Lambda$-module by \cref{lemma} \textit{(2)}. As $\Omega^{-1}(-)$ is an equivalence on the stable category, the object $T'$ is indecomposable, and consequently $T' \simeq T^{i'}$ for some \mbox{$i' \in \{1,\dots,t\}$}. This allows us to define the map 
\[
\pi \colon \{1,\dots,t\} \rightarrow \{1,\dots,t\}
\]
by setting $\pi(i)=i'$. 

We next show that $\pi$ is injective and hence a permutation. Let \mbox{$\pi(i) = \pi(j)$} and assume $\ell_i \neq \ell_j$. Without loss of generality, we consider the case $\ell_i > \ell_j$. Our assumption yields 
\[
\Omega^{-(l_i-l_j)}T^i \simeq T^j \langle -(g_i - g_j) \rangle.
\]
Observe that the integers $\ell_i'=\ell_i-\ell_j$ and $g_i'=g_i-g_j$ hence satisfy \cref{periodisk ortogonal}. Note in particular that $0 <\ell_i' <\ell_i$ and that positivity of $\ell_i'$ combined with $T$ being almost $(n,\underline{g},\underline{\ell})$-self-orthogonal implies positivity of $g_i'$. This contradicts the minimality condition in \cref{almost n-T-Koszul}, so we must have $\ell_i=\ell_j$, which implies $T^i \simeq T^j$. As $T$ is basic, this means that $i=j$, which finishes our proof.
\end{proof}

Using our fixed decomposition $T \simeq \oplus^t_{i = 1} T^{i}$ together with the definition of $\widetilde{T}$, we see that the algebra $B = \End_{\stgrmodu \Lambda}(\widetilde{T})$ decomposes as
\[
B \simeq \bigoplus_{i=1}^t \bigoplus_{j=0}^{a-1} \Hom_{\stgrmodu \Lambda}(\widetilde{T},X^{i,j}),
\]
where $X^{i,j} = \Omega^{-nj}T^{i}\langle j \rangle$. Hence, the indecomposable projective $B$-modules $$P^{i,j} = \Hom_{\stgrmodu \Lambda}(\widetilde{T},X^{i,j})$$ 
are indexed by the set
\[
J = \{(i,j) \mid 1 \leq i \leq t ~~\text{ and }~~ 0 \leq j \leq a-1\}.
\]
Notice that if $\widetilde{T}$ is a tilting object in $\stgrmodu \Lambda$, then $X^{i,j}$ is the image of $P^{i,j}$ under the equivalence $\D^b(\modu B) \xrightarrow{\simeq} \stgrmodu \Lambda$ described in \cref{tilting section}.

Given a permutation $\sigma$ on the index set $J$, we let $\sigma^{L}_j$ and $\sigma^{R}_i$ be defined by 
\[
\sigma(i,j) = (\sigma^{L}_j(i),\sigma^{R}_i(j)).
\]

We are now ready to prove the first part of \cref{nrepfinchar}. 

\begin{thm}\label{nrepfinchar2to1}
Assume \cref{setup}. If $\widetilde{T}$ is a tilting object in $\stgrmodu \Lambda$ and the algebra \mbox{$B = \End_{\stgrmodu \Lambda}(\widetilde{T})$} is $(na-1)$-representation finite, then $\Lambda$ is minimally almost $n$-$T$-Koszul.
\end{thm}

\begin{proof}
By \cite{Herschend-Iyama}*{Proposition 0.2}, there is a permutation $\sigma$ on $J$ such that for every pair $(i,j)$ in $J$ there is an integer $m_{i,j} \geq 0$ with
\[
\nu_{na-1}^{-m_{i,j}}P^{i,j} \simeq I^{\sigma(i,j)}
\]
in $\D^b(\modu B)$ as $B$ is $(na-1)$-representation finite. Applying $\nu_{na-1}^{-1}$ on both sides, we get
\[
\nu_{na-1}^{-m_{i,j}-1}P^{i,j} \simeq P^{\sigma(i,j)}[na-1].
\]
Since $\widetilde{T}$ is a tilting object in $\stgrmodu \Lambda$, we have an equivalence \mbox{$\D^b(\modu B) \xrightarrow{\simeq} \stgrmodu \Lambda$} as described in \cref{tilting section}. Using that $X^{i,j} = \Omega^{-nj}T^{i}\langle j \rangle$ is the image of $P^{i,j}$ in $\stgrmodu \Lambda$ under this equivalence, combined with the correspondence of Serre functors described at the end of \cref{correspondence of Serre functors}, one obtains 
\[
\Omega^{-(na-1)(m_{i,j}+1) - (m_{i,j}+1)}X^{i,j}_{\mu^{-m_{i,j} -1}}\langle a(m_{i,j}+1) \rangle \simeq \Omega^{-(na-1)}X^{\sigma(i,j)}
\]
since $\Omega(-)_{\mu}\langle - a\rangle$ is the Serre functor of $\stgrmodu \Lambda$. This again yields
\begin{equation} \label{assumed condition}
\Omega^{-nam_{i,j} -1}X^{\mu^{-m_{i,j} -1}(i),j} \simeq X^{\sigma(i,j)} \langle -a(m_{i,j}+1) \rangle,
\end{equation}
as $(-)_\mu$ commutes with cosyzygies and graded shifts and permutes the summands of $T$. It follows that for each pair $(i,j)$ in $J$, we get 
\begin{equation} \label{for pi}
\Omega^{-nam_{i,j}-1 - n(j - \sigma^{R}_{i}(j))}T^{\mu^{-m_{i,j} -1}(i)} \simeq T^{\sigma^{L}_{j}(i)}\langle -a(m_{i,j} +1) + \sigma^{R}_{i}(j) - j  \rangle. 
\end{equation}
Twisting by $\mu^{m_{i,j}+1}$ and setting $j=0$, one obtains
\begin{equation} \label{condition}
    \Omega^{-(nam_{i,0} - n\sigma^{R}_{i}(0) + 1)}T^i \simeq T^{\mu^{m_{i,0}+1}(\sigma^{L}_{0}(i))}\langle -a(m_{i,0} +1) + \sigma^{R}_{i}(0) \rangle. 
\end{equation}

Letting $m_i \coloneqq m_{i,0}$ and $\sigma_i \coloneqq \sigma^{R}_i(0)$, we hence see that with
\mbox{$\ell_i = n(am_i - \sigma_i) + 1$} 
and 
$g_i = a(m_i + 1) - \sigma_i$, 
part (1) in the definition of being almost \mbox{$(n,\underline{g},\underline{\ell})$}-self-orthogonal is satisfied for $T$. Note that since $g_i$ of this form is always positive, so is $\ell_i$, as can be seen by applying \cref{lemma} \textit{(7)}. It is straightforward to check that part (3) of \cref{almost n-T-Koszul} holds for the choice of $\ell_i$ and $g_i$ given above.

In order to show that the minimality condition in \cref{almost n-T-Koszul} is satisfied, consider an integer $k$ satisfying $0 < nk <\ell_i$. Note that we can write \mbox{$k = qa - r$} with $q \geq 1$ and \mbox{$0 \leq r \leq a-1$}. Aiming for a contradiction, assume that there is an integer $j \in \{1,\dots,t\}$ with
\[
\Omega^{-n(qa - r)}T^{i} \simeq T^{j}\langle -(qa-r) \rangle.
\]
Twisting by $(-)_{\mu^{-q}}$ and using the equivalence $\D^b(\modu B) \simeq \stgrmodu \Lambda$ in a similar way as in the beginning of this proof, we obtain
\[
\nu_{na-1}^{-q}P^{i,0} \simeq P^{\mu^{-q}(j),r}.
\]
Applying $\nu_{na-1}$ on both sides yields
\begin{equation}\label{contradiction}
    \nu_{na-1}^{-(q-1)}P^{i,0} \simeq I^{\mu^{-q}(j),r}[-na+1].
\end{equation}
\sloppy From the assumption $nk<\ell_i$ along with the description of $\ell_i$, we deduce that \mbox{$0 \leq q-1 \leq m_i$}. As long as $na>1$, the expression (\ref{contradiction}) hence contradicts \cref{n-rep-fin}, so we can conclude that the minimality condition in \cref{almost n-T-Koszul} is satisfied. If $na=1$, the algebra $B$ is semisimple. In particular, this implies that $\ell_i=1$, so the condition is trivially satisfied in this case.

It remains to prove that $T$ satisfies part (2) of \cref{periodisk ortogonal}, i.e.\ that for each \mbox{$i \in \{1,\dots,t\}$}, we have $\Ext_{\gr \Lambda}^{nk+l}(T,T^i \langle k \rangle) = 0$ for $l \neq 0$ and $nk+l <\ell_i$. If \mbox{$nk+l \leq 0$}, this is immediately clear, so we can assume $nk+l > 0$. This yields
\[
\Ext_{\gr \Lambda}^{nk+l}(T,T^i \langle k \rangle) \simeq \Hom_{\stgrmodu \Lambda}(T,\Omega^{-(nk+l)}T^i\langle k \rangle).
\]
In the case $k < 0$, this is zero by \cref{lemma} \textit{(7)}, and we can thus assume \mbox{$k \geq 0$}. 

As $\widetilde{T}$ is a tilting object in $\stgrmodu \Lambda$, a similar argument as in the proof of \cref{Characterization} yields an isomorphism
\begin{equation} \label{isomorphism}
    \Hom_{\stgrmodu \Lambda}(\widetilde{T},\Omega^{-(nam+l)}X^{\mu^{-m}(i),j} \langle am \rangle) \simeq \Hm^l(\nu_{na-1}^{-m}(P^{i,j}))
\end{equation}
for every pair $(i,j)$ in $J$. By \cref{n-rep-fin}, we know that \mbox{$\Hm^l(\nu_{na-1}^{-m}(P^{i,j})) = 0$} for $l \neq 0$ and $0 \leq m \leq m_{i,j}$ as $B$ is $(na-1)$-representation finite. Using that $(-)_{\mu}$ is an equivalence on $\stgrmodu \Lambda$, that $\widetilde{T}_{\mu} \simeq \widetilde{T}$ and splitting up on summands of \mbox{$\widetilde{T} = \oplus_{s=0}^{a-1}\Omega^{-ns}T\langle s \rangle$}, this yields
\begin{equation} \label{know}
\Hom_{\stgrmodu \Lambda}(T,\Omega^{-(n(am-s+j)+l)}T^i \langle am - s + j\rangle) = 0
\end{equation}
for $l \neq 0$ and $0 \leq m \leq m_{i,j}$. We simplify this by letting $j = 0$. Hence, we have $m_{i,j} = m_i$. In the case $k \leq am_i$, we can write $k=am-s$ for appropriate values of $m$ and $s$, so (\ref{know}) implies our desired conclusion in this case. If \mbox{$k > am_{i}$}, we use the isomorphism $T^i \simeq \Omega^{\ell_i}T^{\pi(i)}\langle -g_i\rangle$ to rewrite 
\[ \Hom_{\stgrmodu \Lambda}(T,\Omega^{-(nk+l)}T^i \langle k\rangle) \simeq \Hom_{\stgrmodu \Lambda}(T,\Omega^{\ell_i-(nk+l)}T^{\pi(i)} \langle k - g_i\rangle).\]
When $nk+l <\ell_i$, this is $0$ by \cref{lemma} \textit{(8)}. To see this, notice that the assumption $k>am_i$ combined with the definition of $g_i$ yields \mbox{$k-g_i \geq 1-a$}. This finishes our proof.
\end{proof}

For convenience, we record the following observation from the proof above.

\begin{prop} \label{prop: argument at beginning of proof of 6.11}
Assume \cref{setup}. Let $\widetilde{T}$ be a tilting object in $\stgrmodu \Lambda$ and $m$ a non-negative integer. For $1 \leq i, i' \leq t$ and $0 \leq j, j' \leq a - 1$, we have that \mbox{$\nu_{na-1}^{-m}P^{i,j} \simeq I^{i',j'}$}
if and only if
\[
\Omega^{-nam -1}X^{\mu^{-m -1}(i),j} \simeq X^{i',j'} \langle -a(m+1) \rangle.
\]
\end{prop}

\begin{proof}
The statement follows from the argument given at the beginning of the proof of \cref{nrepfinchar2to1} up until (\ref{assumed condition}). Note in particular that the argument can be reversed and that it works in the generality stated here.
\end{proof}

Before giving a result which demonstrates how the aforementioned pieces of combinatorial data associated to higher almost Koszul algebras and higher representation finite algebras are related, we need the following lemma.

\begin{lem} \label{B is basic}
Assume \cref{setup}. If $\widetilde{T}$ is a tilting object in $\stgrmodu \Lambda$, then the algebra $B = \End_{\stgrmodu \Lambda}(\widetilde{T})$ is basic.
\end{lem}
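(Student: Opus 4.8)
The plan is to reduce the statement to showing that the object $\widetilde T=\bigoplus_{(i,j)\in J}X^{i,j}$, with $X^{i,j}=\Omega^{-nj}T^{i}\langle j\rangle$, is multiplicity-free in $\stgrmodu\Lambda$. Each $X^{i,j}$ is indecomposable there: $T^{i}$ is indecomposable and non-projective (the latter by \cref{lemma} \textit{(3)} together with $\Soc\Lambda\subseteq\Lambda_a$, exactly as in the proof of \cref{indeclem}), and $\Omega^{-nj}(-)\langle j\rangle$ is an autoequivalence of $\stgrmodu\Lambda$. So saying the $X^{i,j}$ are pairwise non-isomorphic is saying precisely that $B=\End_{\stgrmodu\Lambda}(\widetilde T)$ has pairwise non-isomorphic indecomposable projective modules, i.e.\ that $B$ is basic.

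To prove multiplicity-freeness, suppose $X^{i,j}\simeq X^{i',j'}$ in $\stgrmodu\Lambda$; by symmetry I may take $j\leq j'$ and set $e=j'-j\in\{0,\dots,a-1\}$. Applying the autoequivalence $\Omega^{nj}(-)\langle -j\rangle$, and using that $\Omega^{\pm 1}$ commutes with the graded shift, reduces this to $T^{i}\simeq\Omega^{-ne}T^{i'}\langle e\rangle$ in $\stgrmodu\Lambda$. I would first rule out $e\geq 1$: applying $\Omega^{ne}(-)\langle -e\rangle$ to this isomorphism yields $T^{i'}\simeq\Omega^{ne}T^{i}\langle -e\rangle$ in $\stgrmodu\Lambda$. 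Since $ne\geq 1$ and $T^{i}$ is concentrated in degree $0$, \cref{lemma} \textit{(4)} shows that $\Omega^{ne}T^{i}$ (taken with no nonzero projective summand, per our convention on syzygies) has highest degree at least $a$; hence $\Omega^{ne}T^{i}\langle -e\rangle$ has highest degree at least $a-e\geq 1$, whereas $T^{i'}$ has highest degree $0$ and no projective summand. Two finitely generated graded modules without nonzero projective summands that become isomorphic in $\stgrmodu\Lambda$ are already isomorphic in $\gr\Lambda$ — the standard fact that splitting off the maximal projective summand is well defined on stable isomorphism classes, already used implicitly in \cref{indeclem} — so their highest degrees must agree, a contradiction. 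Therefore $e=0$, so $T^{i}\simeq T^{i'}$ in $\stgrmodu\Lambda$ and hence in $\gr\Lambda$ by the same principle; as $T$ is basic this forces $i=i'$, and then $j=j'$. Thus $X^{i,j}\simeq X^{i',j'}$ implies $(i,j)=(i',j')$, which completes the argument.

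The computations are short; the one point I would state with some care is the passage from an isomorphism in $\stgrmodu\Lambda$ to one in $\gr\Lambda$ for modules with no projective summands, which is the main (though minor) obstacle. Everything else is bookkeeping with \cref{lemma} \textit{(4)} and the compatibility of syzygies with graded shift. I remark that the hypothesis that $\widetilde T$ is a tilting object is not actually used in this proof; it is included only to match the setting in which the lemma is later applied.
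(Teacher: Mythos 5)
Your proof is correct and takes essentially the same route as the paper: both establish that the indecomposable summands $X^{i,j}=\Omega^{-nj}T^i\langle j\rangle$ of $\widetilde T$ are pairwise non-isomorphic via a degree argument from \cref{lemma}. The paper assumes two summands are isomorphic, applies the autoequivalence $\Omega^{nl}(-)\langle -l\rangle$ to reduce to a nonvanishing $\Hom_{\gr\Lambda}(T^i,\Omega^{n(j-l)}T^k\langle l-j\rangle)$, and concludes by \cref{lemma}~\textit{(7)}; you instead invoke \cref{lemma}~\textit{(4)} to compare highest degrees directly, which is the same computation unwound one step. You are somewhat more explicit on two minor points the paper leaves implicit: that an isomorphism in $\stgrmodu\Lambda$ between modules with no projective summands lifts to $\gr\Lambda$ (needed before one can compare highest degrees or compute in $\Hom_{\gr\Lambda}$), and that \cref{lemma}~\textit{(7)} as stated has $M$ as both source and target whereas here the source and target are different summands of $T$ -- your use of part \textit{(4)} sidesteps that. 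Your observation that the tilting hypothesis is not actually used is also accurate; the paper invokes it only in passing to reduce to $\widetilde T$ being multiplicity-free, but that reduction is valid for any object in a Krull--Schmidt category.
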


\begin{proof}
As $\widetilde{T}$ is a tilting object in $\stgrmodu \Lambda$, it suffices to show that $\widetilde{T}$ is basic. Note that the indecomposable summands of $\widetilde{T}$ are of the form $\Omega^{-nj}T^i\langle j \rangle$ with \mbox{$0 \leq i \leq t$} and \mbox{$0 \leq j \leq a-1$}. 
Assume that we have isomorphic summands
\[
\Omega^{-nj}T^i\langle j \rangle \simeq \Omega^{-nl}T^k\langle l \rangle.
\]
If $j=l$, it follows that $i=k$ as $T$ is basic. Without loss of generality, we hence assume $j>l$. Consider now
\[
\Hom_{\gr \Lambda}(T^i,T^i) \simeq \Hom_{\gr \Lambda}(T^i,\Omega^{-n(l-j)}T^k\langle l-j \rangle),
\]
which is non-zero as $T^i \neq 0$. This contradicts \cref{lemma} \textit{(8)}, as \mbox{$l-j \geq 1-a$} and \mbox{$-n(l-j)>0$}, so we can conclude that $(i,j)=(k,l)$.
\end{proof}

To state and prove some of the results below, it is convenient to introduce some notation that will make the connection between almost \mbox{$n$-$T$-Koszul} algebras and the theory of higher representation finite algebras more transparent.

\begin{defin}\label{prop: equivalence of almost n-T-Koszul defs}
Let $\Lambda$ be an almost $n$-$T$-Koszul algebra with highest degree $a$ and parameters $\underline{g} = (g_1, \ldots, g_t)$ as in \cref{almost n-T-Koszul}.
Considering $g_i$ modulo $a$, we rewrite it as $g_i = a(m_i + 1) - \sigma_i$, where $m_i$ and $\sigma_i$ are non-negative integers such that $\sigma_i \leq a - 1$. 
Define the \textit{associated periodicity data} by \mbox{$\underline{m} = (m_1, \ldots, m_t)$} and $\underline{\sigma} = (\sigma_1, \ldots, \sigma_t)$.
\end{defin}

It is worth noting that the notation introduced above agrees with the form $\ell_i$ and $g_i$ took in the proof of \cref{nrepfinchar2to1}.
Moreover, note that if $\Lambda$ is minimally almost $n$-$T$-Koszul, then it follows by the division algorithm that the sequences of integers 
$\underline{m}=(m_1,\dots,m_t)$ and $\underline{\sigma}=(\sigma_1,\dots,\sigma_t)$ 
in \cref{prop: equivalence of almost n-T-Koszul defs} are uniquely determined.

Recall from \cite{Herschend-Iyama}*{Proposition 0.2} and the proof of \cref{nrepfinchar2to1} that when $B$ is $(na-1)$-representation finite, there is a permutation $\sigma$ on $J$ such that for every pair $(i,j)$ in $J$ there is an integer $m_{i,j} \geq 0$ with
\[
\nu_{na-1}^{-m_{i,j}}P^{i,j} \simeq I^{\sigma(i,j)}.
\]
As before, we use the notation 
\[
\sigma(i,j)=(\sigma_j^L(i),\sigma_i^R(j)).
\]

The proposition below provides information about how the permutation $\sigma$ and the integers $m_{i,j}$ associated to $B$ being $(na-1)$-representation finite are related to the associated periodicity data $\underline{m}$ and $\underline{\sigma}$ from \cref{prop: equivalence of almost n-T-Koszul defs}.

\begin{prop}\label{consistent parameters prop}
Assume \cref{setup}. If $\widetilde{T}$ is a tilting object in $\stgrmodu \Lambda$ and the algebra $B = \End_{\stgrmodu \Lambda}(\widetilde{T})$ is $(na-1)$-representation finite, then $\Lambda$ is minimally almost $n$-$T$-Koszul with associated periodicity data $\underline{m}$ and $\underline{\sigma}$ 
satisfying that 
\mbox{$m_i = m_{i,0}$} and \mbox{$\sigma_i = \sigma_i^R(0)$}. 
Moreover, we have
\[
{\sigma}_{i}^{R}(j) 
= \left\{
	\begin{array}{ll}
       \sigma_{i} + j & \mbox{if } \sigma_{i} + j \leq a-1 \\
	   \sigma_{i} + j - a & \mbox{if } \sigma_{i} + j > a-1
	\end{array}
\right. 
\]
and 
\[
m_{i,j} = 
\left\{
	\begin{array}{ll}
	m_i & \mbox{if } j \leq \sigma_{i}^{R}(j)\\
	m_i - 1 & \mbox{if } j > \sigma_{i}^{R}(j).
	\end{array}
\right.
\] 
Additionally, if $\pi$ is the permutation on $\{1,\dots,t\}$ induced by $\Lambda$ being minimally almost $n$-$T$-Koszul, we have
\[
\sigma^{L}_{j}(i) = \mu^{-m_{i,j} - 1}(\pi(i)).
\] 
\end{prop}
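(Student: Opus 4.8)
The first assertion is exactly what is proved in \cref{nrepfinchar2to1}: there $l_i$, $g_i$ and the parameters are obtained precisely by setting $m_i=m_{i,0}$ and $\sigma_i=\sigma^R_i(0)$, so nothing further is needed for it. For the remaining identities the plan is to extract more from the cosyzygy computation in that proof. Twisting the isomorphism displayed just before \eqref{condition} by $\mu^{m_{i,j}+1}$, but \emph{without} specialising $j$ to $0$, yields for every $(i,j)\in J$ an isomorphism
\[
\Omega^{-L_{i,j}}T^i \simeq T^{\mu^{m_{i,j}+1}(\sigma^L_j(i))}\langle -G_{i,j}\rangle, \qquad L_{i,j}=n\bigl(am_{i,j}+j-\sigma^R_i(j)\bigr)+1,\quad G_{i,j}=a(m_{i,j}+1)+j-\sigma^R_i(j),
\]
and one checks directly that $L_{i,j}=nG_{i,j}-na+1$ (the same relation $l_i=ng_i-na+1$ holds for the parameters themselves). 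I claim the whole statement reduces to the single point that $L_{i,j}=l_i$ (equivalently $G_{i,j}=g_i$) for all $j$.

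Granting the claim, everything follows by elementary arithmetic. From $L_{i,j}=l_i=nam_i-n\sigma_i+1$ one gets $\sigma^R_i(j)-j=a(m_{i,j}-m_i)+\sigma_i$; since $j$, $\sigma^R_i(j)$ and $\sigma_i$ all lie in $\{0,\dots,a-1\}$ the left-hand side lies in $\{-(a-1),\dots,a-1\}$, which forces $m_{i,j}-m_i\in\{-1,0\}$, and comparing the two possibilities gives $\sigma^R_i(j)=\sigma_i+j$ with $m_{i,j}=m_i$ when $\sigma_i+j\le a-1$, and $\sigma^R_i(j)=\sigma_i+j-a$ with $m_{i,j}=m_i-1$ when $\sigma_i+j\ge a$. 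As $\sigma_i\ge 0$ the first case is exactly $j\le\sigma^R_i(j)$ and as $\sigma_i\le a-1$ the second is exactly $j>\sigma^R_i(j)$, which is the asserted description of $m_{i,j}$. Finally, with $L_{i,j}=l_i$ and $G_{i,j}=g_i$ the displayed isomorphism reads $\Omega^{-l_i}T^i\simeq T^{\mu^{m_{i,j}+1}(\sigma^L_j(i))}\langle -g_i\rangle$; comparing with $\Omega^{-l_i}T^i\simeq T^{\pi(i)}\langle -g_i\rangle$ from \cref{indeclem} and using that $T$ is basic gives $\mu^{m_{i,j}+1}(\sigma^L_j(i))=\pi(i)$, i.e.\ $\sigma^L_j(i)=\mu^{-m_{i,j}-1}(\pi(i))$.

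So the real content is the claim $L_{i,j}=l_i$. One may assume $j\ge 1$, since $L_{i,0}=l_i$ by construction, and then $a\ge 2$, hence $na\ge 2$. First, $L_{i,j}\ge 1$: the summand $T^i$ is non-projective (by \cref{lemma} \textit{(3)} and \cref{graded frobenius lemma}) and $G_{i,j}\ge 1$, so if $L_{i,j}\le 0$ the module $\Omega^{-L_{i,j}}T^i$ would have non-negative highest degree by \cref{lemma} \textit{(4)}, while $T^{\mu^{m_{i,j}+1}(\sigma^L_j(i))}\langle -G_{i,j}\rangle$ has negative highest degree. Next, $L_{i,j}\ge l_i$: if $1\le L_{i,j}<l_i$, the displayed isomorphism gives $\Ext^{L_{i,j}}_{\gr\Lambda}(T,T^i\langle G_{i,j}\rangle)\simeq\Hom_{\stgrmodu\Lambda}(T,T^{\mu^{m_{i,j}+1}(\sigma^L_j(i))})\ne 0$, yet $L_{i,j}=nG_{i,j}-na+1\ne nG_{i,j}$, so part \textit{(2)} of \cref{periodisk ortogonal} forces this $\Ext$-group to vanish -- a contradiction.

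It remains to rule out $L_{i,j}>l_i$, and this is the step I expect to be the main obstacle. Assuming $L_{i,j}>l_i$, one subtracts the period relations $\Omega^{-l_{\pi^s(i)}}T^{\pi^s(i)}\simeq T^{\pi^{s+1}(i)}\langle -g_{\pi^s(i)}\rangle$ of \cref{indeclem} from the displayed isomorphism, one at a time. Because $L_{i,j}-l_i=n(G_{i,j}-g_i)$ is a positive multiple of $n$, the first subtraction produces $\Omega^{-(L_{i,j}-l_i)}T^{\pi(i)}\simeq T'\langle -(G_{i,j}-g_i)\rangle$ with $T'\in\add T$, so the minimality condition \cref{higheralmostKoszuldef} \textit{(3)} for $T^{\pi(i)}$ forces $L_{i,j}-l_i\ge l_{\pi(i)}$. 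The point to verify next is that subtracting each further $l_{\pi^s(i)}$ shifts the ``defect'' (cosyzygy length minus $n$ times internal shift) by $na-1\ne 0$, so that from the second step on the resulting relations $\Omega^{-J_s}T^{\pi^s(i)}\simeq T'\langle -K_s\rangle$ satisfy $J_s\ne nK_s$; then $J_s\ge 1$ (as $J_s=0$ would force $K_s=0$), and part \textit{(2)} of \cref{periodisk ortogonal} for $T^{\pi^s(i)}$ forces $J_s\ge l_{\pi^s(i)}$, i.e.\ $L_{i,j}\ge\sum_{r=0}^{s}l_{\pi^r(i)}$ for every $s$. Since $\pi$ has finite order and every $l_{\pi^r(i)}\ge 1$, the right-hand side is unbounded, contradicting finiteness of $L_{i,j}$. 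Hence $L_{i,j}=l_i$, and the proposition follows. The delicate part is exactly this last iteration: one must track that the relation $L=nG-na+1$ between cosyzygy length and internal shift is preserved under subtracting period relations, so that the defect genuinely grows and the minimality condition \cref{higheralmostKoszuldef} \textit{(3)} is needed only once, the remaining steps being governed purely by almost graded $n$-self-orthogonality.
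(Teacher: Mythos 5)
Your proof is correct, but it takes a genuinely different route from the paper's. After the common starting point (twisting the isomorphism preceding (7.3) without specialising $j=0$ to obtain $\Omega^{-L_{i,j}}T^i\simeq T'\langle -G_{i,j}\rangle$), the paper goes \emph{back} to the $B$-module side: it applies $\Omega^{nj}(-)\langle -j\rangle$ (resp.\ $\Omega^{-n(a-j)}(-)\langle a-j\rangle$ and a twist by $\mu^{-1}$) to land the shift parameter in $\{0,\dots,a-1\}$, reverses the translation of \cref{nrepfinchar2to1} to get $\nu_{na-1}^{-m_{i,j}}P^{i,0}\simeq I^{\sigma(i,j)-(0,j)}$ (resp.\ $\nu_{na-1}^{-(m_{i,j}+1)}P^{i,0}\simeq I^{\mu^{-1}(\sigma^L_j(i)),\,\sigma^R_i(j)+a-j}$), and concludes by the uniqueness of the injective in the $\tau_{na-1}^{-1}$-orbit (\cite{Iyama}*{Proposition 1.3}) together with \cref{B is basic}. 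Your argument instead stays entirely inside $\stgrmodu\Lambda$: you reduce everything to the single claim $L_{i,j}=l_i$, verify the lower bound $L_{i,j}\ge l_i$ directly from part~\textit{(2)} of \cref{periodisk ortogonal} using $L_{i,j}=nG_{i,j}-na+1\ne nG_{i,j}$, and rule out $L_{i,j}>l_i$ by iteratively subtracting the period relations of \cref{indeclem}, tracking that the defect $J_s-nK_s$ equals $(s-1)(na-1)$ so that \cref{higheralmostKoszuldef}~\textit{(3)} is used once and part~\textit{(2)} of \cref{periodisk ortogonal} thereafter, forcing $L_{i,j}\ge\sum_{r=0}^s l_{\pi^r(i)}$ for all $s$, a contradiction. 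The bookkeeping in the iteration is the point you rightly flag as delicate, but it does go through: $J_s\ge 0$ follows inductively from $J_{s-1}\ge l_{\pi^{s-1}(i)}$, and $J_s=0$ is excluded since $J_s\ne nK_s$ would then give $K_s\ne 0$, contradicting both sides being concentrated in degree $0$. Your route is longer, but it buys a proof that never leaves the graded module category of $\Lambda$ and makes explicit how the constraint $L=nG-na+1$ interacts with the minimality and almost-orthogonality conditions; the paper's route is shorter because it transfers the combinatorics to the $(na-1)$-Auslander--Reiten translate on the $B$-side, where uniqueness of the injective in a $\tau^{-1}$-orbit does the work in one stroke.
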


\begin{proof}
Recall first that $\Lambda$ is minimally almost $n$-$T$-Koszul by \cref{nrepfinchar2to1}. 
Moreover, we have $m_i = m_{i,0}$ and $\sigma_i = \sigma_i^R(0)$ by the proof of \cref{nrepfinchar2to1}. From now, consider a fixed integer $i \in \{1,\dots,t\}$ and let $0 \leq j \leq a-1$.

Our next aim is to verify the first two equations in the formulation of the proposition. Note that to get the desired expression for $\sigma_i^R(j)$, it is enough to show that
\[
{\sigma}_{i}^{R}(j) 
= \left\{
	\begin{array}{ll}
       {\sigma}_{i}^{R}(0) + j & \mbox{if } j \leq {\sigma}_{i}^{R}(j) \\
	   {\sigma}_{i}^{R}(0) + j - a & \mbox{if } j > {\sigma}_{i}^{R}(j).
	\end{array}
\right. 
\]
To see that this is sufficient, observe that given the expression above, one has $j \leq {\sigma}_{i}^{R}(j)$ if and only if $\sigma_i + j \leq a-1$. Indeed, if $j \leq {\sigma}_{i}^{R}(j)$, our formula gives
\[
{\sigma}_{i}^{R}(j) = {\sigma}_{i}^{R}(0) + j = \sigma_i + j,
\]
so $\sigma_i + j \leq a-1$. On the other hand, the assumption \mbox{$j > {\sigma}_{i}^{R}(j)$} yields 
\[
\sigma_i^R(j) = \sigma_i^R(0) + j - a = \sigma_i + j - a,
\]
which implies $\sigma_i + j > a-1$.

Assume $j \leq \sigma_i^R(j)$. Observe that one obtains
\[
\Omega^{-nam_{i,j} -1}X^{\mu^{-m_{i,j} -1}(i),0} \simeq X^{\sigma(i,j)-(0,j)} \langle -a(m_{i,j}+1) \rangle
\]
by applying $\Omega^{nj}(-)\langle -j\rangle$ to (\ref{assumed condition}). Our assumption yields \mbox{$0 \leq \sigma_i^R(j)-j \leq a-1$}, so we can apply \cref{prop: argument at beginning of proof of 6.11} to get
\[
\nu_{na-1}^{-m_{i,j}}P^{i,0} \simeq I^{\sigma(i,j)-(0,j)}.
\]
Recall that $\Hm^0(\nu_{na-1}^{-1}-) \simeq \tau_{na-1}^{-1}$ as endofunctors on $\modu B$, where $\tau_{na-1}^{-1}$ denotes the $(na-1)$-Auslander--Reiten translation. Note that the $\tau_{na-1}^{-1}$-orbit of a projective $B$-module contains precisely one injective \cite{Iyama}*{Proposition 1.3}. Compare our expression above with
\[
\nu_{na-1}^{-m_{i,0}}P^{i,0} \simeq I^{\sigma(i,0)}.
\]
If $na>1$, we deduce that $m_{i,j}=m_{i,0}$ and $I^{\sigma(i,j) - (0,j)} \simeq I^{\sigma(i,0)}$. If $na = 1$, then $B$ is semisimple. This implies $m_{i,j}=m_{i,0}=0$, and the same conclusion thus follows. In particular, this yields  
\[
\sigma(i,j) - (0,j) = \sigma(i,0)
\]
as $B$ is basic. Consequently, we obtain our desired expressions for $\sigma_i^R(j)$ and $m_{i,j}$ once we have made the substitutions $m_i = m_{i,0}$ and \mbox{$\sigma_i = \sigma_i^R(0)$}.

For the second case, assume $j > \sigma_i^R(j)$. Note that we now necessarily have $na > 1$ as $m_i=0$ implies $\sigma_i=0$. Apply \mbox{$\Omega^{-n(a - j)}(-)\langle a - j\rangle$} to (\ref{assumed condition}) to get
\[
\Omega^{-na(m_{i,j} + 1) -1}X^{\mu^{-(m_{i,j}+1)}(i),0} \simeq X^{\sigma(i,j) + (0,a - j)} \langle -a((m_{i,j}+1)+1)\rangle.
\]
Our assumption yields $0 < \sigma_i^R(j) + a - j \leq a-1$. Twisting by $(-)_{\mu^{-1}}$ and again applying \cref{prop: argument at beginning of proof of 6.11}, we hence obtain 
\[
\nu_{na-1}^{-(m_{i,j}+1)}P^{i,0} \simeq I^{\mu^{-1}(\sigma_j^L(i)),\sigma_i^R(j)+a-j}.
\]
Similarly as above, this leads to our desired expressions for $\sigma_i^R(j)$ and $m_{i,j}$.

It remains to check that $\sigma^{L}_{j}(i) = \mu^{-m_{i,j} - 1}(\pi(i))$. This follows by applying what we have shown so far to (\ref{for pi}).
\end{proof}

Our next aim is to prove the other direction of this section's main result, i.e.\ \cref{nrepfinchar}. Let us first give an overview of some useful observations.

\begin{lem}\label{almost lemma}
Assume \cref{setup} and let $\Lambda$ be minimally almost $n$-$T$-Koszul with associated periodicity data $\underline{m}$ and $\underline{\sigma}$. 
The following statements hold for \mbox{$1 \leq i \leq t$}:
\begin{enumerate}
    \item We have $\pi \circ \mu = \mu \circ \pi$, where $\pi$ is the permutation on $\{1,\dots,t\}$ induced by $\Lambda$ being minimally almost $n$-$T$-Koszul.
    \item The constants $\ell_i$ and $g_i$ satisfy $\ell_i =\ell_{\mu(i)}$ and $g_i = g_{\mu(i)}$.
    \item The constants $m_i$ and $\sigma_i$ satisfy $m_i = m_{\mu(i)}$ and $\sigma_i = \sigma_{\mu(i)}$.
    \item We have $g_i \geq a$. Moreover, if $m_i = 0$, then $\sigma_i = 0$.
\end{enumerate}
\end{lem}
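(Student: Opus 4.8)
The plan is to establish the statements in the order $l_i = l_{\mu(i)}$, then (3), then (2), then (1), with (4) treated separately at the end. The engine for everything except (4) is the autoequivalence $(-)_\mu$ of $\stgrmodu\Lambda$: it commutes with cosyzygies and with graded shifts and, by our standing assumptions, sends $T^i$ to $T^{\mu(i)}$ (so its quasi-inverse $(-)_{\mu^{-1}}$ sends $T^j$ to $T^{\mu^{-1}(j)}$). Recall from \cref{indeclem} that $\Omega^{-l_i}T^i \simeq T^{\pi(i)}\langle -g_i\rangle$; applying $(-)_\mu$ gives $\Omega^{-l_i}T^{\mu(i)} \simeq T^{\mu(\pi(i))}\langle -g_i\rangle$, which exhibits the pair $(l_i,g_i)$ as satisfying condition (1) of \cref{periodisk ortogonal} for $T^{\mu(i)}$, with $T^{\mu(\pi(i))}\in\add T$. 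As observed in the remark following \cref{higheralmostKoszuldef}, condition (3) there is equivalent to $l_{\mu(i)}$ being least among the positive integers $l$ for which $\Omega^{-l}T^{\mu(i)}\simeq T'\langle -g\rangle$ for some positive integer $g$ and some $T'\in\add T$. Hence $l_{\mu(i)}\le l_i$, and running the symmetric argument with $(-)_{\mu^{-1}}$ applied to the defining isomorphism of \cref{indeclem} for the index $\mu(i)$ yields $l_i\le l_{\mu(i)}$, so $l_i=l_{\mu(i)}$.

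Granting $l_i=l_{\mu(i)}$, part (3) drops out of the numerology: since $l_j=nam_j-n\sigma_j+1$, we get $a(m_i-m_{\mu(i)})=\sigma_i-\sigma_{\mu(i)}$, and as $0\le\sigma_i,\sigma_{\mu(i)}\le a-1$ the right-hand side has absolute value at most $a-1<a$ while the left-hand side is a multiple of $a$, so both vanish; this gives $m_i=m_{\mu(i)}$ and $\sigma_i=\sigma_{\mu(i)}$. Part (2) then follows at once: the equality of the $l$'s is already established, and $g_i=a(m_i+1)-\sigma_i=a(m_{\mu(i)}+1)-\sigma_{\mu(i)}=g_{\mu(i)}$ by part (3). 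For part (1), observe that both $T^{\mu(\pi(i))}\langle -g_i\rangle$ (from the twisted isomorphism above) and $T^{\pi(\mu(i))}\langle -g_{\mu(i)}\rangle$ (from the defining isomorphism for $\mu(i)$) are isomorphic to $\Omega^{-l_i}T^{\mu(i)}$ in $\stgrmodu\Lambda$, using $l_i=l_{\mu(i)}$. Since $g_i=g_{\mu(i)}$, applying the autoequivalence $\langle g_i\rangle$ gives $T^{\mu(\pi(i))}\simeq T^{\pi(\mu(i))}$ in $\stgrmodu\Lambda$. Both modules are concentrated in degree $0$, so by \cref{lemma} \textit{(5)} (valid since $a\ge1$) the isomorphism already holds in $\gr\Lambda$; as $T$ is basic, $\mu(\pi(i))=\pi(\mu(i))$ for every $i$, i.e.\ $\pi\circ\mu=\mu\circ\pi$.

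Part (4) is independent of the rest and uses only that, by definition of almost graded $n$-self-orthogonality (\cref{periodisk ortogonal}), each $l_i$ is a positive integer. From $l_i=nam_i-n\sigma_i+1\ge1$ we obtain $\sigma_i\le am_i$, whence $g_i=a(m_i+1)-\sigma_i\ge a(m_i+1)-am_i=a$; and if $m_i=0$ then $0\le\sigma_i\le am_i=0$, so $\sigma_i=0$. The one point requiring care is the very first step: the twisted period $(l_i,g_i)$ is not in general of the special shape $(nk,k)$ appearing in condition (3) of \cref{higheralmostKoszuldef}, so it is essential to invoke the minimality of $l_{\mu(i)}$ through the reformulation recorded in the remark after that definition rather than through the literal condition. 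Everything else is bookkeeping with the commuting autoequivalences $\Omega^{\pm1}(-)$, $\langle\pm1\rangle$ and $(-)_\mu$ together with the explicit formulas for $l_i$ and $g_i$.
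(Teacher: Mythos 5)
Your proof is correct, and the core mechanism is the same as the paper's: use that $(-)_\mu$ commutes with $\Omega^{\pm 1}$ and $\langle \pm 1\rangle$, apply it to the defining isomorphism of \cref{indeclem} to obtain two competing ``periods'' of $T^{\mu(i)}$, and then invoke minimality of $l_{\mu(i)}$. The paper's proof lists exactly the same two isomorphisms $T^{\mu(i)} \simeq \Omega^{l_i}T^{\mu(\pi(i))}\langle -g_i\rangle$ and $T^{\mu(i)}\simeq \Omega^{l_{\mu(i)}}T^{\pi(\mu(i))}\langle -g_{\mu(i)}\rangle$ and then says ``an argument similar to that in \cref{indeclem} is sufficient'', concluding \textit{(1)} and \textit{(2)} together and then deriving \textit{(3)}; you instead extract $l_i = l_{\mu(i)}$ first, deduce \textit{(3)} from the formulas $l_j = nam_j - n\sigma_j + 1$ by a divisibility argument modulo $a$, then get \textit{(2)} and finally \textit{(1)}. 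Both orderings work, but yours is arguably tighter: if one takes ``argument similar to \cref{indeclem}'' at face value and compares the two periods by subtracting exponents, one lands on a condition of the form $\Omega^{-nk}T^{\mu(\pi(i))}\simeq T^{\pi(\mu(i))}\langle -k\rangle$, where the relevant bound for invoking part (3) of \cref{higheralmostKoszuldef} is $l_{\mu(\pi(i))}$ rather than $l_{\mu(i)}$, and that index is not yet controlled; by instead treating $(l_i,g_i)$ as a period of $T^{\mu(i)}$ itself (which does have the right shape, and whose condition (2) is automatically inherited when $l_i < l_{\mu(i)}$) and appealing to the reformulation recorded in the remark after \cref{higheralmostKoszuldef}, you sidestep this. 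Your observation that the twisted period does not have the literal $(nk,k)$ shape of condition (3) and that the reformulation is therefore the correct tool is a genuine improvement in precision. Finally, your derivation of \textit{(4)} from $l_i\ge 1 \Rightarrow \sigma_i \le am_i \Rightarrow g_i \ge a$ is cleaner than the paper's, which asserts that $\sigma_i \le a-1$ alone ``implies the claim for $g_i$'' -- a statement that only becomes airtight once the $m_i = 0$ case is combined with the second half of \textit{(4)}.
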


\begin{proof}
For part \textit{(1)} and \textit{(2)}, recall that $\Omega^{\pm 1}(-)$ and $\langle \pm 1 \rangle$ both commute with $(-)_{\mu}$. This implies that $\Omega^{-\ell_i}T^{\mu(i)} \langle g_i \rangle \simeq T^{\mu(\pi(i))}$ and \mbox{$\Omega^{-\ell_{\mu(i)}}T^{\mu(i)}\langle g_{\mu(i)} \rangle \simeq T^{\pi(\mu(i))}$}, and hence this follows by the division algorithm. 

Comparing the expressions for $g_i$ and $g_{\mu(i)}$, we see that part \textit{(3)} follows from \textit{(2)} by a number theoretical argument.

Part \textit{(4)} is a consequence of the definition of $\ell_i$ and $g_i$. To be precise, it is clear that $m_i = 0$ implies $\sigma_i = 0$ as $\ell_i$ is positive. Using this, the assumption \mbox{$\sigma_i \leq a-1$} yields our first statement.
\end{proof}

Compared to what was the case for $n$-$T$-Koszul algebras, it is somewhat more involved to show that $\widetilde{T}$ is a tilting object in $\stgrmodu \Lambda$ whenever $\Lambda$ is minimally almost $n$-$T$-Koszul. We hence prove this as a separate result. 

\newpage
\begin{prop}\label{tilt obj lem}
Assume \cref{setup}. If $\Lambda$ is minimally almost $n$-$T$-Koszul, then $\widetilde{T}$ is a tilting object in $\stgrmodu \Lambda$.
\end{prop}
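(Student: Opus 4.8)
The plan is to verify the two defining conditions of a tilting object. Generation is free: $\Thick_{\stgrmodu \Lambda}(\widetilde{T}) = \stgrmodu \Lambda$ is exactly \cref{genlem}, whose proof uses only our standing assumptions. So everything reduces to rigidity, i.e.\ to $\Hom_{\stgrmodu \Lambda}(\widetilde{T}, \Omega^{-l}\widetilde{T}) = 0$ for $l \neq 0$. Arguing exactly as in the proof of \cref{Characterization}, by decomposing $\widetilde{T} = \oplus_{j=0}^{a-1}\oplus_{i=1}^{t}\Omega^{-nj}T^i\langle j \rangle$ and reindexing, this comes down to showing, for each $i \in \{1,\dots,t\}$, each integer $k$ with $\lvert k \rvert \leq a-1$, and each $l \neq 0$, that
\[
\Hom_{\stgrmodu \Lambda}(T, \Omega^{-(nk+l)}T^i\langle k \rangle) = 0 .
\]

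I would prove this vanishing by a case analysis on $p \coloneqq nk+l$. Throughout, I use that $\Lambda$ is self-injective, so that $\Hom_{\stgrmodu \Lambda}(T, \Omega^{-p}T^i\langle k \rangle) \simeq \Ext^{p}_{\gr \Lambda}(T, T^i\langle k \rangle)$ when $p > 0$, and that $\Hom_{\stgrmodu \Lambda}$ is a quotient of $\Hom_{\gr \Lambda}$, so it suffices to kill the relevant $\Hom$ or $\Ext$ in $\gr \Lambda$ (and $T^i\langle k \rangle$ is a summand of $T\langle k \rangle$). If $p < 0$, the exponent $-(nk+l)$ is positive and $k \geq 1-a$, so \cref{lemma} \textit{(7)} gives the vanishing; if $p = 0$, then $l \neq 0$ forces $k \neq 0$, and a degree count gives $\Hom_{\gr \Lambda}(T, T^i\langle k \rangle) = 0$; if $p > 0$ and $k < 0$, then $\Omega^{-p}T^i\langle k \rangle$ is a cosyzygy twisted into strictly negative internal degree, so \cref{lemma} \textit{(6)} applies; and if $p > 0$, $0 \leq k \leq a-1$ and $p < l_i$, then $\Ext^{p}_{\gr \Lambda}(T, T^i\langle k \rangle) = 0$ is precisely condition (2) of \cref{periodisk ortogonal}, since $p \neq nk$.

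The remaining case, $p > 0$ with $0 \leq k \leq a-1$ and $p \geq l_i$, is where the structure of an $(n,m_i,\sigma_i)$-$T$-Koszul algebra enters, and I expect it to be the main obstacle. Here I would apply the periodicity isomorphism $\Omega^{-l_i}T^i \simeq T^{\pi(i)}\langle -g_i \rangle$ of \cref{indeclem} to rewrite, inside $\stgrmodu \Lambda$,
\[
\Omega^{-p}T^i\langle k \rangle \simeq \Omega^{-(p - l_i)}T^{\pi(i)}\langle k - g_i \rangle ,
\]
using that graded shift commutes with cosyzygies. Now $p - l_i \geq 0$, and crucially $g_i \geq a$ by \cref{almost lemma} \textit{(4)}, so $k - g_i \leq (a-1) - a < 0$. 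If $p - l_i = 0$ the right-hand side is $T^{\pi(i)}\langle k - g_i \rangle$, concentrated in a negative degree, so the $\Hom$ vanishes by a degree count; if $p - l_i > 0$ it is again a cosyzygy twisted into strictly negative internal degree, so \cref{lemma} \textit{(6)} applies once more. A single application of periodicity suffices here, since it already pushes the internal degree below zero, so there is no need to iterate around the $\pi$-orbit. Collecting the cases establishes rigidity, which together with \cref{genlem} shows that $\widetilde{T}$ is a tilting object in $\stgrmodu \Lambda$.
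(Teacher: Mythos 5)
Your proposal is correct and, for most of the argument, takes the same route as the paper: generation is \cref{genlem}, rigidity reduces to the pairwise vanishing $\Hom_{\stgrmodu \Lambda}(T, \Omega^{-(nk+l)}T^i\langle k\rangle)=0$ for $l\neq 0$ and $|k|\leq a-1$, and the cases $nk+l\leq 0$, $k<0$, and $nk+l<l_i$ are disposed of exactly as in \cref{Characterization} and via \cref{periodisk ortogonal}. The one place you genuinely diverge from the paper is the final case $nk+l\geq l_i$ with $0\leq k\leq a-1$. The paper applies the periodicity $T^i\simeq\Omega^{l_i}T^{\pi(i)}\langle -g_i\rangle$, observes that $nk+l-l_i$ can no longer equal $n(k-g_i)$ (one positive, one negative), then either invokes almost self-orthogonality if $nk+l-l_i<l_{\pi(i)}$ or \emph{iterates} the rewriting around the $\pi$-orbit. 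You instead notice that after a single application the internal degree $k-g_i$ is already strictly negative (by \cref{almost lemma} \textit{(4)}), so a pure degree count — the argument underlying \cref{lemma} \textit{(6)}, or \textit{(6)} itself applied to $T$ since $T^{\pi(i)}$ is a summand — kills the $\Hom$ outright, whether $nk+l-l_i$ is zero or positive. This shortcut is valid (the cosyzygy representatives chosen minimally have highest degree $\leq 0$, and $T^{\pi(i)}$ is non-projective by the argument in \cref{indeclem}, so \cref{lemma} \textit{(4)} applies) and avoids the iteration entirely; it is a mild but genuine simplification of the paper's proof, since it reduces the last case to the already-settled case $p>0$, $k<0$.
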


\begin{proof}
Since \cref{genlem} yields $\Thick_{\stgrmodu \Lambda}(\widetilde{T}) = \stgrmodu \Lambda$, we only need to check rigidity. As in the proof of \cref{Characterization}, it is enough to verify that
\[
\Hom_{\stgrmodu \Lambda}(T,\Omega^{-(nk+l)}T\langle k \rangle)=0 ~~\text{ for }~~ l \neq 0
\] 
for any integer $k$ with $\lvert k \rvert \leq a-1$. In the cases $nk+l = 0$ and $nk+l < 0$, the argument is exactly the same as in the proof of \cref{Characterization}, so assume \mbox{$nk+l > 0$}. For each summand $T^i$ of $T$, one now obtains 
\[
\Hom_{\stgrmodu \Lambda}(T,\Omega^{-(nk+l)}T^i\langle k \rangle) \simeq \Ext_{\gr \Lambda}^{nk+l}(T,T^i\langle k \rangle).
\]
In the case $nk+l <\ell_i$, this is zero for $l \neq 0$ as $T$ is almost $(n,\underline{g},\underline{\ell})$-self-orthogonal. Otherwise, we use the isomorphism $T^i \simeq \Omega^{\ell_i} T^{\pi(i)}\langle -g_i \rangle$, where $\pi$ is the permutation on $\{1,\dots,t\}$ induced by $\Lambda$ being minimally almost $n$-$T$-Koszul, to rewrite the expression above. In the case $nk+l =\ell_i$, we get
\[
\Hom_{\stgrmodu \Lambda}(T,\Omega^{-(nk+l-\ell_i)}T^{\pi(i)}\langle k - g_i \rangle) = \Hom_{\stgrmodu \Lambda}(T,T^{\pi(i)}\langle k - g_i \rangle).
\]
This is zero as $\lvert k \rvert \leq a-1$ together with \cref{almost lemma} \textit{(4)} 
yields \mbox{$k-g_i < 0$}. If \mbox{$nk+l >\ell_i$}, one obtains
\[
\Hom_{\stgrmodu \Lambda}(T,\Omega^{-(nk+l-\ell_i)}T^{\pi(i)}\langle k - g_i \rangle) \simeq \Ext_{\gr \Lambda}^{nk+l-\ell_i}(T,T^{\pi(i)}\langle k - g_i \rangle).
\]
As $nk+l-\ell_i > 0$ and $k-g_i<0$, the first expression cannot be written as an $n$-multiple of the second. If $nk+l-\ell_i <\ell_{\pi(i)}$, we are hence done. Otherwise, we iterate the argument until we reach our desired conclusion.
\end{proof}

We are now ready to show the other direction of \cref{nrepfinchar}.

\begin{thm}\label{nrepfinchar1to2}
Assume \cref{setup}.
If $\Lambda$ is minimally almost $n$-$T$-Koszul, then $\widetilde{T}$ is a tilting object in $\stgrmodu \Lambda$ and \mbox{$B = \End_{\stgrmodu \Lambda}(\widetilde{T})$} is $(na-1)$-representation finite.
\end{thm}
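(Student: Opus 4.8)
The plan is to handle the two assertions separately. That $\widetilde T$ is a tilting object in $\stgrmodu\Lambda$ is exactly \cref{tilt obj lem}, so it remains to prove that $B=\End_{\stgrmodu\Lambda}(\widetilde T)$ is $(na-1)$-representation finite. By \cref{gldim lemma} we know $\gldim B<\infty$; to invoke \cref{nrepfindef} we must first upgrade this to $\gldim B\le na-1$ (step \textit{(A)} below), after which it suffices to check $DB\in\add\{\nu_{na-1}^i B\mid i\in\Z\}$, i.e.\ that every indecomposable injective $B$-module lies in the $\nu_{na-1}$-orbit of a projective (step \textit{(B)}). Since $\widetilde T$ is a tilting object, the equivalence $F\colon\D^b(\modu B)\xrightarrow{\simeq}\stgrmodu\Lambda$ of \cref{eksplisitt ekvivalens} and the correspondence of Serre functors from \cref{tilting section} identify $\nu_{na-1}^{-m}(P^{i,j})$ with $\Omega^{-n(am+j)}T^{\mu^{-m}(i)}\langle am+j\rangle$ in $\stgrmodu\Lambda$ and give the isomorphism (\ref{isomorphism}); both steps will come out of feeding the defining relations of an $(n,m_i,\sigma_i)$-$T$-Koszul algebra into these identifications. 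In effect this is the argument of \cref{nrepfinchar2to1} run in reverse, and it also reproduces the data of \cref{consistent parameters prop}.

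For \textit{(B)} I would set $m_{i,j}:=m_i$ when $\sigma_i+j\le a-1$ and $m_{i,j}:=m_i-1$ otherwise; in the second case $\sigma_i>0$, hence $m_i\ge 1$ by \cref{almost lemma} \textit{(4)}, so $m_{i,j}\ge 0$. Applying the isomorphism $\Omega^{-l_i}T^i\simeq T^{\pi(i)}\langle -g_i\rangle$ of \cref{indeclem} once, using $l_{\mu^{-m}(i)}=l_i$, $g_{\mu^{-m}(i)}=g_i$ and $\pi\mu=\mu\pi$ from \cref{almost lemma}, and substituting $l_i=nam_i-n\sigma_i+1$, $g_i=a(m_i+1)-\sigma_i$, a direct computation rewrites $\Omega^{-n(am_{i,j}+j)}T^{\mu^{-m_{i,j}}(i)}\langle am_{i,j}+j\rangle$ as $\Omega^{1-nj'}T^{\mu(i')}\langle j'-a\rangle$, where $j'$ is whichever of $\sigma_i+j$ and $\sigma_i+j-a$ lies in $\{0,\dots,a-1\}$ and $i'=\mu^{-m_{i,j}-1}(\pi(i))$. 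As $\Omega^{1-nj'}T^{\mu(i')}\langle j'-a\rangle=F(I^{(i',j')})$, the module $\nu_{na-1}^{-m_{i,j}}(P^{i,j})$ is indecomposable injective, and since these exhaust the indecomposable injectives as $(i,j)$ ranges over $J$ we obtain $DB\in\add\{\nu_{na-1}^i B\}$.

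For \textit{(A)} I would verify the hypothesis of \cref{nrepinflem} for $B$ with parameter $na-1$: that $H^l(\nu_{na-1}^{-1}(P^{p,q}))=0$ for $l\notin\{0,-(na-1)\}$ and all $(p,q)\in J$. Via (\ref{isomorphism}) with $m=1$ and $\widetilde T=\oplus_{s=0}^{a-1}\Omega^{-ns}T\langle s\rangle$, this group is a sum of terms $\Hom_{\stgrmodu\Lambda}(T,\Omega^{-(nk'+l)}T^{\mu^{-1}(p)}\langle k'\rangle)$ with $k'=a+q-s\ge 1$, each of which vanishes when $nk'+l\le 0$ and otherwise equals $\Ext^{nk'+l}_{\gr\Lambda}(T,T^{\mu^{-1}(p)}\langle k'\rangle)$; one then applies almost graded $n$-self-orthogonality, iterating $T^k\simeq\Omega^{l_k}T^{\pi(k)}\langle -g_k\rangle$ whenever the cohomological degree reaches $l_k$, in the same spirit as the rigidity argument of \cref{tilt obj lem}. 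The identity $l_k-ng_k=-(na-1)$, valid for every $k$, shows that $-(na-1)$ is the only nonzero cohomological degree in which a term can survive --- it is the degree of the projective produced when $\nu^{-1}$ crosses the unique injective in the orbit --- while a short estimate using $\sigma_i\le a-1$ and $q\le a-1$ rules out the further possibilities $-r(na-1)$ with $r\ge 2$. Then \cref{nrepinflem} yields $\gldim B\le na-1$, and together with \textit{(B)} this gives $(na-1)$-representation finiteness of $B$ by \cref{nrepfindef}.

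The step I expect to be the main obstacle is the bookkeeping behind \textit{(A)} and \textit{(B)}: one must coordinate the Nakayama permutation $\mu$, the period permutation $\pi$, and the fact that the periods $l_k,g_k$ (equivalently $m_k,\sigma_k$) may genuinely differ among the indecomposable summands of $T$, all while keeping every auxiliary integer --- the $m_{i,j}$, the degree shifts produced by iterating the periodicity, the exponents fed into \cref{lemma} \textit{(5)}--\textit{(7)} and into almost graded $n$-self-orthogonality --- inside the ranges where those results apply, and while making sure the iteration of the periodicity isomorphism terminates. Pinning down \textit{(A)} precisely, i.e.\ that no cohomology escapes into a cohomological degree other than $0$ and $-(na-1)$, is the delicate point.
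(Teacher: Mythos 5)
Your overall strategy matches the paper's proof closely: both use \cref{tilt obj lem} for the tilting property, define $m_{i,j}$ and the target index $(i',j')$ by the same formulas (those of \cref{consistent parameters prop}), run the Serre-functor identification from \cref{eksplisitt ekvivalens} in reverse to show that $\nu_{na-1}^{-m_{i,j}}P^{i,j}$ is indecomposable injective, and verify $\gldim B\le na-1$ by iterating the periodicity isomorphism through (\ref{isomorphism}) and invoking \cref{nrepinflem}. Your computations in step \textit{(B)} rewriting $\Omega^{-n(am_{i,j}+j)}T^{\mu^{-m_{i,j}}(i)}\langle am_{i,j}+j\rangle$ as $F(I^{(i',j')})$ in the two cases are correct, and your step \textit{(A)}, though stated loosely, is the same argument as the paper's.

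There is, however, a genuine gap in \textit{(B)}. You produce, for each $(i,j)\in J$, a pair $(i',j')\in J$ with $\nu_{na-1}^{-m_{i,j}}P^{i,j}\simeq I^{(i',j')}$, and you then assert that ``these exhaust the indecomposable injectives as $(i,j)$ ranges over $J$.'' That assertion is exactly the statement that the resulting map $\sigma\colon J\to J$, $\sigma(i,j)=(i',j')$, is surjective (equivalently, since $J$ is finite, a permutation), and it does not follow automatically. If $\sigma$ failed to be injective --- say two distinct projectives $P^{i,j}$ and $P^{k,l}$ both mapped to the same injective --- then some $I^{(p,q)}$ would be missed and $DB\in\add\{\nu_{na-1}^iB\}$ would not follow. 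You cannot appeal to the fact that the $\nu_{na-1}$-orbit of a projective in a $(na-1)$-representation finite algebra contains a unique injective, because $(na-1)$-representation finiteness is precisely what you are trying to establish. The paper closes this gap with a dedicated argument: it first notes that $\mu$, $\pi$, and hence $\sigma_0^L$ are injective (using \cref{almost lemma} \textit{(1)} and \textit{(3)}), and then checks injectivity of $\sigma$ by a case split on the sign of $j-\sigma_i^R(j)$ and $l-\sigma_k^R(l)$; the mixed case is shown to be impossible by a degree estimate that contradicts $0\le j,l\le a-1$. You need an argument of this kind before you can conclude $DB\in\U$.
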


\begin{proof}
Since $\Lambda$ is minimally almost $n$-$T$-Koszul, the associated periodicity data $\underline{m} = (m_1, \ldots, m_t)$ and $\underline{\sigma} = (\sigma_1, \ldots, \sigma_t)$ is uniquely determined.

As $\widetilde{T}$ is a tilting object in $\stgrmodu \Lambda$ by \cref{tilt obj lem}, we only need to show that \mbox{$B = \End_{\stgrmodu \Lambda}(\widetilde{T})$} is $(na-1)$-representation finite. Let us first use the integers $m_i$ and $\sigma_i$ to define $\sigma_i^R(j)$, $m_{i,j}$ and $\sigma_j^L(i)$ for $(i,j)$ in $J$ by the formulas in the formulation of \cref{consistent parameters prop}. Note that this yields \mbox{$0 \leq \sigma_i^R(j) \leq a-1$}, as well as $1 \leq \sigma_j^L(i) \leq t$ and $m_{i,j}\geq 0$. The latter is a consequence of \cref{almost lemma} \textit{(4)}.

Using that $\Lambda$ is minimally almost $n$-$T$-Koszul, we see that (\ref{for pi}) is satisfied. Furthermore, we can apply \cref{prop: argument at beginning of proof of 6.11}, using that $\widetilde{T}$ is a tilting object in $\stgrmodu \Lambda$. Consequently, one obtains 
\[
\nu_{na-1}^{-m_{i,j}}P^{i,j} \simeq I^{\sigma(i,j)}
\]
for every indecomposable projective $B$-module $P^{i,j}$, where 
\[
\sigma(i,j) : = (\sigma^{L}_{j}(i), \sigma^{R}_{i}(j)).
\]

Our next aim is to show that $\sigma$ is a permutation on $J$. As $J$ is a finite set, it is enough to check injectivity. Recall that $\mu$ and $\pi$ are permutations, and hence injective. Combining this with \cref{almost lemma} \textit{(1)} and \textit{(3)}, notice that also $\sigma_0^L$ is injective.

Assume that $\sigma(i,j)=\sigma(k,l)$ for $(i,j)$ and $(k,l)$ in $J$. If $j \leq \sigma^{R}_{i}(j)$ and \mbox{$l \leq \sigma^{R}_{k}(l)$}, we see that 
\[
\sigma^{L}_0(i) = \sigma^{L}_j(i) = \sigma^{L}_l(k) = \sigma^{L}_0(k),
\]
so $i = k$ by injectivity of $\sigma_0^L$. As we in this case also have
\[
\sigma_i^R(0) + j = \sigma_i^R(j) = \sigma_k^R(l) = \sigma_k^R(0) + l,
\]
it follows that $j=l$, so $\sigma$ is injective. The argument in the case \mbox{$j > \sigma^{R}_{i}(j)$} and \mbox{$l > \sigma^{R}_{k}(l)$} is similar.

By symmetry, it remains to consider the case where $j \leq \sigma^{R}_{i}(j)$ and \mbox{$l > \sigma^{R}_{k}(l)$}. Here, the assumption $\sigma(i,j) = \sigma(k,l)$ yields
\[
\sigma^{L}_0(i) = \sigma^{L}_j(i) =  \sigma^{L}_l(k) = \mu(\sigma^{L}_0(k)).
\]
Consequently, \cref{almost lemma} \textit{(1)} and \textit{(3)} imply that $i = \mu(k)$ and \mbox{$\sigma_i^R(0) = \sigma_k^R(0)$}. As we in this case also have
\[
\sigma^{R}_i(0) + j = \sigma^{R}_i(j) = \sigma^{R}_k(l) = \sigma^{R}_k(0) + l - a,
\]
this means that $j = l - a$, contradicting the assumption \mbox{$0 \leq j,l \leq a - 1$}. Hence, this case is impossible, and we can conclude that $\sigma$ is a permutation.

It now follows that every indecomposable injective, and hence also $DB$, is contained in the subcategory
\[
\U = \add\{\nu_{na-1}^l B \mid l \in \Z\} \subseteq \D^b(\modu B).
\]
By \cref{nrepfindef}, it thus remains to prove that \mbox{$\gldim B \leq na-1$}. To show this, observe first that $B$ has finite global dimension by \cref{gldim lemma}. As $\widetilde{T}$ is a tilting object in $\stgrmodu \Lambda$, it follows from (\ref{isomorphism}) in the proof of \cref{nrepfinchar2to1} that we have
\begin{align*}
\Hm^l(\nu_{na-1}^{-1}P^{i,j}) & \simeq \Hom_{\stgrmodu \Lambda}(\widetilde{T},\Omega^{-(na+l)}X^{\mu^{-1}(i),j} \langle a \rangle) \\ & \simeq \bigoplus_{s=0}^{a-1} \Hom_{\stgrmodu \Lambda}(T,\Omega^{-(n(a+j-s)+l)}T^i \langle a + j - s \rangle)
\end{align*}
for every pair $(i,j)$ in $J$. We want to show that this is zero whenever \mbox{$l \not \in \{1 - na, 0\}$}. Note that the argument for this is similar to the proof of \cref{tilt obj lem}. In particular, it is enough to consider the case $n(a+j-s) +l \geq\ell_i$ for each $i$, since the remaining cases are covered by our previous proof. Using that \mbox{$\Omega^{-\ell_i}T^{i} \simeq T^{\pi(i)}\langle -g_i \rangle$}, the summands in our expression above can be rewritten as
\[
\Hom_{\stgrmodu \Lambda}(T,\Omega^{-n(\sigma_i+j-s - am_i) - (na - 1 +l)}T^{\pi(i)} \langle  \sigma_i + j - s -am_i \rangle). 
\]
If $n(\sigma_i+j-s - am_i) + na - 1 +l <\ell_{\pi(i)}$, this is non-zero only when $l$ is as claimed. Otherwise, \cref{almost lemma} \textit{(4)} implies that we get a negative graded shift in the next step of the iteration, and we are done by the same argument as in the proof of \cref{tilt obj lem}. From this, we see that the claim holds, and so 
\[
\bigoplus_{(i,j) \in J} \Hm^l(\nu_{na-1}^{-1}P^{i,j})
\simeq \Hm^l(\nu_{na-1}^{-1}B) 
\simeq \Ext^{na-1 + l}_B(DB,B) = 0
\]
for $l \not \in \{1-na, 0\}$. By \cref{nrepinflem}, it thus follows that \mbox{$\gldim B \leq na - 1$}. Applying \cref{nrepfindef}, we conclude that $B$ is $(na-1)$-representation finite, which finishes our proof.
\end{proof}

Altogether, combining \cref{nrepfinchar2to1} and \cref{nrepfinchar1to2}, we have now proved \cref{nrepfinchar}. We next present some consequences of this theorem, similar to the ones in \cref{T-Koszul and n-rep inf}. Notice that unlike the corresponding result for $n$-representation infinite algebras, the following corollary is not -- as far as we know -- an analogue of anything existing in the literature. Mutatis mutandis, the proof is the same as that of \cref{motivating result} and is hence omitted. 

\begin{cor}\label{cor sec 7} 
Let $\Lambda = \Lambda_0 \oplus \Lambda_1$ be a finite dimensional graded Frobenius algebra of highest degree $1$ with $\gldim \Lambda_0 < \infty$. Then $\Lambda$ is minimally almost $(n+1)$-Koszul with respect to $T=\Lambda_0$ if and only if $\Lambda_0$ is $n$-representation finite. In particular, we obtain a bijective correspondence 
\[
\left\{
\begin{tabular}{@{}l@{}}
    isomorphism classes of  \\
    basic $n$-representation \\ 
    finite algebras
\end{tabular}
\right\}
{\rightleftarrows}
\left\{
\begin{tabular}{@{}l@{}}
    isomorphism classes of graded symmetric finite \\
    dimensional algebras of highest degree $1$ which  \\
    are minimally almost $(n+1)$-Koszul with \\ 
    respect to their degree $0$ parts
\end{tabular}
\right\},
\]
where the maps are given by $ A \longmapsto \Delta A$ and $\Lambda_0 \longmapsfrom \Lambda$.
\end{cor}

Just like in \cref{T-Koszul and n-rep inf}, it is natural to consider the notion of an almost $n$-$T$-Koszul dual of a given almost $n$-$T$-Koszul algebra. 

\begin{defin}
Let $\Lambda$ be an almost $n$-$T$-Koszul algebra. The \textit{almost $n$-$T$-Koszul dual of $\Lambda$} is given by $\Lambda^! =  \oplus_{i \geq 0} \Ext_{\gr \Lambda}^{ni}(T,T\langle i \rangle)$.
\end{defin}

As before, note that while the notation $\Lambda^{!}$ is potentially ambiguous, it is for us always clear from context which structure the dual is computed with respect to. 

Our next proposition shows that if $\Lambda$ is minimally almost $n$-$T$-Koszul, then the $na$-th preprojective algebra of $B = \End_{\stgrmodu \Lambda}(\widetilde{T})$ is isomorphic to a twist of the $a$-th quasi-Veronese of $\Lambda^!$. The proof is exactly the same as that of the corresponding result in \cref{T-Koszul and n-rep inf}, namely \cref{preprojective and Veronese}.

\begin{prop} \label{preprojective and Veronese for n-rep fin}
Assume \cref{setup} and let $\Lambda$ be minimally almost $n$-$T$-Koszul. Then $\Pi_{na}B \simeq {}_{\langle (\overline{\mu}^{-1})^{[a]}\rangle}(\Lambda^{!})^{[a]}$ as graded algebras. In particular, we have \mbox{$\Pi_{na}B \simeq (\Lambda^{!})^{[a]}$} in the case where $\Lambda$ is graded symmetric.
\end{prop}

The proof of \cref{cor: dual sec. 7} below is similar to that of \cref{cor: dual of triv.ext.} and is hence omitted.

\begin{cor}\label{cor: dual sec. 7}
If $A$ is basic $n$-representation finite, then \mbox{$\Pi_{n+1}A \simeq (\Delta A)^!$} as graded algebras.
\end{cor}

We now illustrate the use of the main result from this section, i.e.\ the characterization given as \cref{nrepfinchar}, in a concrete example.

\begin{example}\label{ex:char7}
Let $A$ denote the path algebra of the quiver
\[
\begin{tikzcd}
    1 \arrow[r,"\alpha_1"] & 2 \arrow[r,"\alpha_2"] & 3 \arrow[r,"\alpha_3"] & 4.
\end{tikzcd}
\]
Its preprojective algebra $\Lambda = \Pi_2(A)$ has quiver given by
\[
\begin{tikzcd}
    1 \arrow[r,bend left,"\alpha_1"] & 2 \arrow[l,bend left,"\alpha'_1"] \arrow[r,bend left,"\alpha_2"] & 3 \arrow[l,bend left,"\alpha'_2"] \arrow[r,bend left,"\alpha_3"] & 4 \arrow[l,bend left,"\alpha'_3"]
\end{tikzcd}
\]
with relations $\alpha_1 \alpha'_1$, $\alpha'_1\alpha_1 - \alpha_2 \alpha'_2$, $\alpha'_2\alpha_2 - \alpha_3 \alpha'_3$ and $\alpha'_3\alpha_3$. We endow $\Lambda$ with the grading induced by putting all arrows in degree $1$. With this grading, the algebra $\Lambda$ is $(3,2)$-Koszul in the sense of \cref{almost Koszul} by \cite{Brenner-Butler-King}*{Corollary 4.3} (see also \cite{Grant-Iyama}*{Theorem B}). It thus follows from \cref{ex: almost-T-Koszul} that $\Lambda$ is minimally almost $1$-Koszul with respect to \mbox{$T=\Lambda_0=\Lambda / \Rad \Lambda$}. Note moreover that the highest degree of $\Lambda$ is $3$ and that the standing assumptions described in \cref{setup} are satisfied. By \cref{nrepfinchar}, we can thus conclude that \mbox{$\widetilde{T} = \oplus_{i=0}^{2}\Omega^{-i}T \langle i \rangle$} is a tilting object in $\stgrmodu \Lambda$ and that $B = \End_{\stgrmodu \Lambda}(\widetilde{T})$ is $2$-representation finite. 

We finish this example by explicitly computing $\widetilde{T}$ and $B$. The injectives of $\Lambda$ are
\begin{center}
\begin{tikzpicture}
\node[scale=.75] (a) at (0,0){$ 
\arraycolsep=.8pt\def\arraystretch{0.8}
\begin{array}{c}
1_{-3} \\
2_{-2} \\
3_{-1} \\
4_{0} \\
\end{array}
$};
\node[scale=.75] (b) at (2.5,0){$ 
\arraycolsep=.8pt\def\arraystretch{0.8}
\begin{array}{cccc}
& 2_{-3}  & & \\
1_{-2} & & 3_{-2} & \\
& 2_{-1} & & 4_{-1} \\
& & 3_{0} & \\
\end{array}
$};
\node[scale=.75] (c) at (5.7,0){$ 
\arraycolsep=.8pt\def\arraystretch{0.8}
\begin{array}{cccc}
& & 3_{-3} & \\
& 2_{-2} & & 4_{-2} \\
1_{-1} & & 3_{-1} & \\
& 2_0 & &\\
\end{array}
$};
\node[scale=.75] (d) at (8.2,0){$ 
\arraycolsep=.8pt\def\arraystretch{0.8}
\begin{array}{c}
4_{-3} \\
3_{-2} \\
2_{-1} \\
1_{0} \\
\end{array},
$};
\end{tikzpicture}
\end{center}
where the subscripts indicate the degree. The tilting object $\widetilde{T}$ has twelve summands, which are given as follows: 

\begin{center}
\begin{tikzpicture}
\node[scale=.75] (a) at (0,0){$ 
\arraycolsep=.8pt\def\arraystretch{0.8}
\begin{array}{c}
1_0 
\end{array}
$};

\node[scale=.75] (g) at (2.5,0){$ 
\arraycolsep=.8pt\def\arraystretch{0.8}
\begin{array}{c}
4_{-2} \\
3_{-1} \\
2_0 
\end{array}
$};

\node[scale=.75] (b) at (5,0){$ 
\arraycolsep=.8pt\def\arraystretch{0.8}
\begin{array}{c}
3_{-2} \\
2_{-1} \\
1_{0} 
\end{array}
$};

\node[scale=.75] (h) at (0,-1.5){$ 
\arraycolsep=.8pt\def\arraystretch{0.8}
\begin{array}{c}
2_0 
\end{array}
$};

\node[scale=.75] (c) at (2.5,-1.5){$ 
\arraycolsep=.8pt\def\arraystretch{0.8}
\begin{array}{cccc}
& & 3_{-2} & \\
& 2_{-1} & & 4_{-1} \\
1_{0} & & 3_{0} & 
\end{array}
$};

\node[scale=.75] (i) at (5,-1.5){$ 
\arraycolsep=.8pt\def\arraystretch{0.8}
\begin{array}{cccc}
& 2_{-2} &  & 4_{-2} \\
1_{-1} &  & 3_{-1} & \\
& 2_{0} & & 
\end{array}
$};

\node[scale=.75] (d) at (0,-3){$ 
\arraycolsep=.8pt\def\arraystretch{0.8}
\begin{array}{c}
3_0 
\end{array}
$};

\node[scale=.75] (j) at (2.5,-3){$ 
\arraycolsep=.8pt\def\arraystretch{0.8}
\begin{array}{cccc}
& 2_{-2} &  & \\
1_{-1} &  & 3_{-1} & \\
& 2_{0} & & 4_{0} 
\end{array}
$};

\node[scale=.75] (e) at (5,-3){$ 
\arraycolsep=.8pt\def\arraystretch{0.8}
\begin{array}{cccc}
1_{-2} & & 3_{-2} & \\
& 2_{-1} & & 4_{-1} \\
& & 3_0 & 
\end{array}
$};

\node[scale=.75] (k) at (0,-4.5){$ 
\arraycolsep=.8pt\def\arraystretch{0.8}
\begin{array}{c}
4_{0} 
\end{array}
$};

\node[scale=.75] (f) at (2.5,-4.5){$ 
\arraycolsep=.8pt\def\arraystretch{0.8}
\begin{array}{c}
1_{-2} \\
2_{-1} \\
3_{0} 
\end{array}
$};

\node[scale=.75] (b) at (5,-4.5){$ 
\arraycolsep=.8pt\def\arraystretch{0.8}
\begin{array}{c}
2_{-2} \\
3_{-1} \\
4_{0} 
\end{array}
$};
\end{tikzpicture}
\end{center}
Note that each row above consists of a simple module and degree shifts of its cosyzygies. 

Inspecting this, we observe that $\widetilde{T}$ decomposes as \mbox{$\widetilde{T} = \widetilde{T}_1 \oplus \widetilde{T}_2$}, where $\widetilde{T}_1$ and $\widetilde{T}_2$ are $\Hom_{\stgrmodu \Lambda}$-orthogonal to each other. The summands $\widetilde{T}_1$ and $\widetilde{T}_2$ has six indecomposable summands each, and we arrange the indecomposables of $\widetilde{T}_1$ as follows:
\vspace{0.001em}

\begin{center}
\begin{tikzpicture}
\node[scale=.75] (a) at (0,0){$ 
\arraycolsep=.8pt\def\arraystretch{0.8}
\begin{array}{c}
1_0 
\end{array}
$};

\node[scale=.75] (b) at (3,0){$ 
\arraycolsep=.8pt\def\arraystretch{0.8}
\begin{array}{c}
3_{-2} \\
2_{-1} \\
1_{0} 
\end{array}
$};

\node[scale=.75] (c) at (1.5,-1.25){$ 
\arraycolsep=.8pt\def\arraystretch{0.8}
\begin{array}{cccc}
& & 3_{-2} & \\
& 2_{-1} & & 4_{-1} \\
1_{0} & & 3_{0} & 
\end{array}
$};

\node[scale=.75] (d) at (0,-2.5){$ 
\arraycolsep=.8pt\def\arraystretch{0.8}
\begin{array}{c}
3_0 
\end{array}
$};

\node[scale=.75] (e) at (3,-2.5){$ 
\arraycolsep=.8pt\def\arraystretch{0.8}
\begin{array}{cccc}
1_{-2} & & 3_{-2} & \\
& 2_{-1} & & 4_{-1} \\
& & 3_0 & 
\end{array}
$};

\node[scale=.75] (f) at (1.5,-3.5){$ 
\arraycolsep=.8pt\def\arraystretch{0.8}
\begin{array}{c}
1_{-2} \\
2_{-1} \\
3_{0} \\
\end{array}
$};
\end{tikzpicture}
\end{center}
Using this, we see that $\End_{\stgrmodu \Lambda}(\widetilde{T}_1)$ is given by the quiver 
\[
\hspace{-1.5em}
\begin{tikzcd}
    1 & & 2\dlar["\beta_2"]\\
    & 3 \ular["\beta_1"] \dlar["\beta_3"] &\\
    4  & & 5 \ular["\beta_4"] \dlar["\beta_6"]\\
    & 6 \ular["\beta_5"] &
\end{tikzcd}
\]
\sloppy with relations $\beta_4 \beta_1, \beta_2 \beta_3$ and $\beta_4 \beta_3 - \beta_6 \beta_5$. One can recognize this as a $2$-representation finite algebra of type $A$, see \cite{Iyama-Oppermann APR}. The endomorphism algebra $\End_{\stgrmodu \Lambda}(\widetilde{T}_2)$ turns out to be isomorphic to $\End_{\stgrmodu \Lambda}(\widetilde{T}_1)$, and we have \mbox{$B \simeq \End_{\stgrmodu \Lambda}(\widetilde{T}_1) \times \End_{\stgrmodu \Lambda}(\widetilde{T}_2)$}. 
\end{example}

\cref{ex:char7} illustrates \cref{cor:application} below, which gives a general method for producing higher representation finite algebras from representation finite hereditary algebras. Note that although the $2$-representation finite algebra we obtain in \cref{ex:char7} is already described in the literature, we believe that applying \cref{cor:application} in cases where the algebra we start with is not of type $A$ will produce novel examples of $n$-representation finite algebras for any odd $n > 1$.

\begin{cor}\label{cor:application}
    Let $A=kQ$ for a Dynkin quiver $Q$ with Coxeter number $h \geq 4$. Consider $\Lambda = \Pi_2(A)$ with grading given by putting arrows in degree $1$, and let $T=\Lambda_0$. Then $B = \End_{\stgrmodu \Lambda}(\widetilde{T})$ is $(h - 3)$-representation finite.  
\end{cor}

\begin{proof}
    By \cite{Brenner-Butler-King}*{Corollary 4.3}, the algebra $\Lambda$ is $(h-2,2)$-Koszul in the sense of \cref{almost Koszul}. Note moreover that by \cite{Brenner-Butler-King}*{Theorem 4.8}, the standing assumptions described in \cref{setup} are satisfied. It thus follows from \cref{ex: almost-T-Koszul} that \cref{nrepfinchar} can be applied, and the claim follows. 
\end{proof}

\begin{acknowledgements}
The authors would like to thank Steffen Oppermann for helpful discussions and Louis-Philippe Thibault and {\O}yvind Solberg for careful reading and helpful suggestions on a previous version of this paper. They also thank Bernhard Keller for pointing out that $\Lambda^!$ has finite global dimension provided that $\Lambda$ is a finite dimensional $n$-$T$-Koszul algebra, simplifying the assumptions in \cref{Existence of T-Koszul equivalence}. The authors profited from use of the software QPA \cite{QPA} to compute examples which motivated parts of the paper.

Parts of this work were carried out while the first author participated in the Junior Trimester Program ``New Trends in Representation Theory'' at the Hausdorff Research Institute for Mathematics in Bonn. She would like to thank the Institute for excellent working conditions. 

The authors thank an anonymous referee for careful reading, insightful comments, and for suggesting new proofs of various results, including \cref{lemma} (1) and (4), and \cref{Existence of T-Koszul equivalence} (2).
\end{acknowledgements}

\begin{bibdiv}
\begin{biblist}

\bib{Amiot}{article}{
   author={Amiot, Claire},
   title={Cluster categories for algebras of global dimension 2 and quivers
   with potential},
   journal={Ann. Inst. Fourier (Grenoble)},
   volume={59},
   date={2009},
   number={6},
   pages={2525--2590},
}

\bib{Amiot-Iyama-Reiten}{article}{
   author={Amiot, Claire},
   author={Iyama, Osamu},
   author={Reiten, Idun},
   title={Stable categories of Cohen-Macaulay modules and cluster
   categories},
   journal={Amer. J. Math.},
   volume={137},
   date={2015},
   number={3},
   pages={813--857},
}

\bib{Auslander-Reiten-Smalo}{book}{
   author={Auslander, Maurice},
   author={Reiten, Idun},
   author={Smal\o , Sverre O.},
   title={Representation theory of Artin algebras},
   series={Cambridge Studies in Advanced Mathematics},
   volume={36},
   publisher={Cambridge University Press, Cambridge},
   date={1995},
   pages={xiv+423},
}

\bib{Beilinson-Ginzburg-Soergel}{article}{
   author={Beilinson, Alexander},
   author={Ginzburg, Victor},
   author={Soergel, Wolfgang},
   title={Koszul duality patterns in representation theory},
   journal={J. Amer. Math. Soc.},
   volume={9},
   date={1996},
   number={2},
   pages={473--527},
}

\bib{Bernstein-Gelfand-Gelfand}{article}{
    author = {Bern\v{s}te\u{\i}n, Joseph N.},
    author = {Gel\cprime fand, Izrail' M.}, 
    author = {Gel\cprime fand, Sergei I.},
     title = {Algebraic vector bundles on {${\bf P}\sp{n}$} and problems of
              linear algebra},
    journal = {Funktsional. Anal. i Prilozhen.},
    volume = {12},
    year = {1978},
    number = {3},
    pages = {66--67}
}

\bib{BP}{article}{
   author={Bondal, Alexey I.},
   author={Polishchuk, Alexander E.},
   title={Homological properties of associative algebras: the method of
   helices},
   journal={Izv. Ross. Akad. Nauk Ser. Mat.},
   volume={57},
   date={1993},
   number={2},
   pages={3--50},
   translation={
      journal={Russian Acad. Sci. Izv. Math.},
      volume={42},
      date={1994},
      number={2},
      pages={219--260},
      issn={1064-5632},
   }
}

\bib{Bongartz-Gabriel}{article}{
    author = {Bongartz, Klaus},
    author = {Gabriel, Peter},
     title = {Covering spaces in representation-theory},
   journal = {Invent. Math.},
    volume = {65},
      year = {1981/82},
    number = {3},
     pages = {331--378}
}

\bib{Brenner-Butler-King}{article}{
    author = {Brenner, Sheila},
    author = {Butler, Michael C. R.},
    author = {King, Alastair D.},
    title = {Periodic algebras which are almost {K}oszul},
    journal = {Algebr. Represent. Theory},
    volume = {5},
    year = {2002},
    number = {4},
    pages = {331--367},
}

\bib{Buchweitz}{book}{
   author={Buchweitz, Ragnar-Olaf},
   title={Maximal Cohen-Macaulay modules and Tate cohomology},
   series={Mathematical Surveys and Monographs},
   volume={262},
   note={With appendices and an introduction by Luchezar L. Avramov,
   Benjamin Briggs, Srikanth B. Iyengar and Janina C. Letz},
   publisher={American Mathematical Society, Providence, RI},
   date={2021},
   pages={xii+175},
}

\bib{CM}{article}{
   author={Cibils, Claude},
   author={Marcos, Eduardo N.},
   title={Skew category, Galois covering and smash product of a
   $k$-category},
   journal={Proc. Amer. Math. Soc.},
   volume={134},
   date={2006},
   number={1},
   pages={39--50},
}

\bib{Darpo-Iyama}{article}{
   author={Darp\"{o}, Erik},
   author={Iyama, Osamu},
   title={$d$-representation-finite self-injective algebras},
   journal={Adv. Math.},
   volume={362},
   date={2020},
   pages={106932, 50},
}

\bib{DJL}{article}{
   author={Dyckerhoff, Tobias},
   author={Jasso, Gustavo},
   author={Lekili, Yanki},
   title={The symplectic geometry of higher Auslander algebras: symmetric
   products of disks},
   journal={Forum Math. Sigma},
   volume={9},
   date={2021},
   pages={Paper No. e10, 49},
}

\bib{Evans-Pugh}{article}{
    author = {Evans, David E.}, 
    author = {Pugh, Mathew},
    title = {The {N}akayama automorphism of the almost {C}alabi-{Y}au
              algebras associated to {$SU(3)$} modular invariants},
    journal = {Comm. Math. Phys.},
    volume = {312},
    year = {2012},
    number = {1},
    pages = {179--222}
}

\bib{Fernandez-et-al'22}{article}{
      title={Characterisations of trivial extensions}, 
      author= {Elsa A. Fern\'{a}ndez},
      author = {Schroll, Sibylle},
      author = {Treffinger, Hipolito}, 
      author = {Trepode, Sonia}, 
      author = {Valdivieso, Yadira},
      year={2022},
      note={arXiv:2206.04581},
      eprint={2206.04581},
      url={https://arxiv.org/abs/2206.04581}
}

\bib{Fernandez-Platzeck'02}{article}{
    author = {Fern\'{a}ndez, Elsa A.},
    author = {Platzeck, Mar\'{\i}a In\'{e}s},
     title = {Presentations of trivial extensions of finite dimensional
              algebras and a theorem of {S}heila {B}renner},
   journal = {J. Algebra},
    volume = {249},
      year = {2002},
    number = {2},
     pages = {326--344},
      issn = {0021-8693},
}

\bib{Fossum-Griffith-Reiten}{book}{
   author={Fossum, Robert M.},
   author={Griffith, Phillip A.},
   author={Reiten, Idun},
   title={Trivial extensions of abelian categories},
   series={Lecture Notes in Mathematics, Vol. 456},
   publisher={Springer-Verlag, Berlin-New York},
   date={1975},
   pages={xi+122},
}

\bib{Grant-Iyama}{article}{
   author={Grant, Joseph},
   author={Iyama, Osamu},
   title={Higher preprojective algebras, Koszul algebras, and
   superpotentials},
   journal={Compos. Math.},
   volume={156},
   date={2020},
   number={12},
   pages={2588--2627},
}

\bib{Green-Reiten-Solberg}{article}{
   author={Green, Edward L.},
   author={Reiten, Idun},
   author={Solberg, \O yvind},
   title={Dualities on generalized Koszul algebras},
   journal={Mem. Amer. Math. Soc.},
   volume={159},
   date={2002},
   number={754},
   pages={xvi+67},
}

\bib{Hanihara}{article}{
    AUTHOR = {Hanihara, Norihiro},
     TITLE = {Auslander correspondence for triangulated categories},
   JOURNAL = {Algebra Number Theory},
    VOLUME = {14},
      YEAR = {2020},
    NUMBER = {8},
     PAGES = {2037--2058},
      ISSN = {1937-0652,1944-7833},
}

\bib{Happel}{article}{
    author = {Happel, Dieter},
     title = {On the derived category of a finite-dimensional algebra},
    journal = {Comment. Math. Helv.},
    volume = {62},
    year = {1987},
    number = {3},
    pages = {339--389},
}

\bib{Herschend-Iyama}{article}{
   author={Herschend, Martin},
   author={Iyama, Osamu},
   title={$n$-representation-finite algebras and twisted fractionally
   Calabi--Yau algebras},
   journal={Bull. Lond. Math. Soc.},
   volume={43},
   date={2011},
   number={3},
   pages={449--466},
}

\bib{Herschend-Iyama2}{article}{
   author={Herschend, Martin},
   author={Iyama, Osamu},
   title={Selfinjective quivers with potential and 2-representation-finite
   algebras},
   journal={Compos. Math.},
   volume={147},
   date={2011},
   number={6},
   pages={1885--1920},
}

\bib{Herschend-Iyama-Minamoto-Oppermann}{article}{
    title={Representation theory of Geigle-Lenzing complete intersections}, 
    author={Herschend, Martin},
    author = {Iyama, Osamu },
    author = {Minamoto, Hiroyuki},
    author = {Oppermann, Steffen},
    journal={to appear in Mem. Amer. Math. Soc.},
}

\bib{Herschend-Iyama-Oppermann}{article}{
   author={Herschend, Martin},
   author={Iyama, Osamu},
   author={Oppermann, Steffen},
   title={$n$-representation infinite algebras},
   journal={Adv. Math.},
   volume={252},
   date={2014},
   pages={292--342},
}

\bib{Iyama_2007}{article}{
   author={Iyama, Osamu},
   title={Auslander correspondence},
   journal={Adv. Math.},
   volume={210},
   date={2007},
   number={1},
   pages={51--82},
}

\bib{Iyama_2007_2}{article}{
   author={Iyama, Osamu},
   title={Higher-dimensional Auslander--Reiten theory on maximal orthogonal
   subcategories},
   journal={Adv. Math.},
   volume={210},
   date={2007},
   number={1},
   pages={22--50},
}

\bib{Iyama_2008}{article}{
   author={Iyama, Osamu},
   title={Auslander--Reiten theory revisited},
   conference={
      title={Trends in representation theory of algebras and related topics},
   },
   book={
      series={EMS Ser. Congr. Rep.},
      publisher={Eur. Math. Soc., Z\"{u}rich},
   },
   date={2008},
   pages={349--397},
}

\bib{Iyama}{article}{
   author={Iyama, Osamu},
   title={Cluster tilting for higher Auslander algebras},
   journal={Adv. Math.},
   volume={226},
   date={2011},
   number={1},
   pages={1--61},
}

\bib{Iyama-Oppermann APR}{article}{
   author={Iyama, Osamu},
   author={Oppermann, Steffen},
   title={$n$-representation-finite algebras and $n$-APR tilting},
   journal={Trans. Amer. Math. Soc.},
   volume={363},
   date={2011},
   number={12},
   pages={6575--6614},
}

\bib{Iyama-Oppermann}{article}{
   author={Iyama, Osamu},
   author={Oppermann, Steffen},
   title={Stable categories of higher preprojective algebras},
   journal={Adv. Math.},
   volume={244},
   date={2013},
   pages={23--68},
}

\bib{Iyama-Wemyss}{article}{
    author = {Iyama, Osamu},
    author = {Wemyss, Michael},
    title = {Maximal modifications and {A}uslander-{R}eiten duality for
              non-isolated singularities},
    journal = {Invent. Math.},
    volume = {197},
    year = {2014},
    number = {3},
    pages = {521--586}
}

\bib{Jasso-Kulshammer}{article}{
    author = {Jasso, Gustavo},
    author = {K\"{u}lshammer, Julian},
    title = {Higher {N}akayama algebras {I}: {C}onstruction},
    note = {With an appendix by K\"{u}lshammer and Chrysostomos Psaroudakis
              and an appendix by Sondre Kvamme},
    journal = {Adv. Math.},
    volume = {351},
    date = {2019},
    pages = {1139--1200},
}

\bib{Kvamme-Jasso}{article}{
   author={Jasso, Gustavo},
   author={Kvamme, Sondre},
   title={An introduction to higher Auslander--Reiten theory},
   journal={Bull. Lond. Math. Soc.},
   volume={51},
   date={2019},
   number={1},
   pages={1--24},
}

\bib{Keller}{article}{
   author={Keller, Bernhard},
   title={Deriving DG categories},
   journal={Ann. Sci. \'{E}cole Norm. Sup. (4)},
   volume={27},
   date={1994},
   number={1},
}

\bib{Keller 2011}{article}{
   author={Keller, Bernhard},
   title={Deformed Calabi--Yau completions},
   note={With an appendix by Michel Van den Bergh},
   journal={J. Reine Angew. Math.},
   volume={654},
   date={2011},
   pages={125--180},
}

\bib{Keller04}{article}{
    author = {Keller, Bernhard},
    title = {On differential graded categories},
    booktitle = {International {C}ongress of {M}athematicians. {V}ol. {II}},
    pages = {151--190},
    publisher = {Eur. Math. Soc., Z\"{u}rich},
    date = {2006}
}

\bib{L}{article}{
   author={Lunts, Valery A.},
   title={Categorical resolution of singularities},
   journal={J. Algebra},
   volume={323},
   date={2010},
   number={10},
   pages={2977--3003},
}

\bib{LS}{article}{
   author={Lunts, Valery A.},
   author={Schn\"{u}rer, Olaf M.},
   title={Smoothness of equivariant derived categories},
   journal={Proc. Lond. Math. Soc. (3)},
   volume={108},
   date={2014},
   number={5},
   pages={1226--1276},
}

\bib{Madsen 2011}{article}{
   author={Madsen, Dag Oskar},
   title={On a common generalization of Koszul duality and tilting
   equivalence},
   journal={Adv. Math.},
   volume={227},
   date={2011},
   number={6},
   pages={2327--2348},
}

\bib{Minamoto 2012}{article}{
    author = {Minamoto, Hiroyuki},
    title = {Ampleness of two-sided tilting complexes},
    journal = {Int. Math. Res. Not. IMRN},
    year = {2012},
    number = {1},
    pages = {67--101}
}

\bib{Minamoto & Mori 2011}{article}{
    author = {Minamoto, Hiroyuki},
    author = {Mori, Izuru},
    title = {The structure of {AS}-{G}orenstein algebras},
    journal = {Adv. Math.},
    volume = {226},
    year = {2011},
    number = {5},
    pages = {4061--4095},
}

\bib{Nastasescu-Van Oystaeyen}{book}{
   author={N\u{a}st\u{a}sescu, Constantin},
   author={Van Oystaeyen, Freddy},
   title={Methods of graded rings},
   series={Lecture Notes in Mathematics},
   volume={1836},
   publisher={Springer-Verlag, Berlin},
   date={2004},
   pages={xiv+304},
}

\bib{Oppermann-Thomas}{article}{
    author = {Oppermann, Steffen},
    author = {Thomas, Hugh},
    title = {Higher-dimensional cluster combinatorics and representation
              theory},
    journal = {J. Eur. Math. Soc. (JEMS)},
    volume = {14},
    year = {2012},
    number = {6},
    pages = {1679--1737}
}

\bib{Priddy}{article}{
    author = {Priddy, Stewart B.},
    title = {Koszul resolutions},
    journal = {Trans. Amer. Math. Soc.},
    volume = {152},
    year = {1970},
    pages = {39--60}
}

\bib{QPA}{webpage}{
  author={The QPA-team},
  title={QPA - Quivers, path algebras and representations},
  date={Version 1.31; 2018},
  myurl={https://folk.ntnu.no/oyvinso/QPA/},
}

\bib{Reiten-Van den Bergh}{article}{
   author={Reiten, Idun},
   author={Van den Bergh, Michel},
   title={Noetherian hereditary abelian categories satisfying Serre duality},
   journal={J. Amer. Math. Soc.},
   volume={15},
   date={2002},
   number={2},
   pages={295--366},
}

\bib{Rickard}{article}{
   author={Rickard, Jeremy},
   title={Derived categories and stable equivalence},
   journal={J. Pure Appl. Algebra},
   volume={61},
   date={1989},
   number={3},
   pages={303--317},
}

\bib{Schroer}{article}{
  author = {Schr\"{o}er, Jan},
  title = {On the quiver with relations of a repetitive algebra},
  journal = {Arch. Math. (Basel)},
  volume = {72},
  date = {1999},
  number = {6},
  pages = {426--432}
}

\bib{Stacks project}{webpage}{
  author={The Stacks Project Authors},
  title={Stacks Project},
  date={2018},
  myurl={https://stacks.math.columbia.edu},
}

\bib{Yamaura 2013}{article}{
    author = {Yamaura, Kota},
    title = {Realizing stable categories as derived categories},
    journal = {Adv. Math.},
    volume = {248},
    year = {2013},
    pages = {784--819},
}

\end{biblist}
\end{bibdiv}

\end{document}